\newcommand{\bbC}{\mathbb{C}}
\newcommand{\bbN}{\mathbb{N}}
\newcommand{\bbR}{\mathbb{R}}
\newcommand{\calD}{\mathcal{D}}
\newcommand{\calF}{\mathcal{F}}
\newcommand{\calL}{\mathcal{L}}
\newcommand{\calM}{\mathcal{M}}
\DeclareMathOperator{\one}{{\mathbbm{1}}} 
\DeclareMathOperator{\re}{Re} 
\newcommand{\argument}{\mathord{\,\cdot\,}} 
\newcommand{\dx}{\;\mathrm{d}} 
\newcommand{\dxMedium}{\nobreak\hspace{.1em plus .08333em}\mathrm{d}} 
\newcommand{\dxShort}{\mathrm{d}} 
\DeclareMathOperator{\linSpan}{span} 
\newcommand{\norm}[1]{\left\lVert #1 \right\rVert} 
\newcommand{\modulus}[1]{\left\lvert #1 \right\rvert} 
\newcommand{\duality}[2]{\left\langle#1\, ,\, #2\right\rangle} 
\DeclareMathOperator{\dom}{dom} 
\DeclareMathOperator{\Ima}{Rg} 
\newcommand\restrict[1]{\raisebox{-.5ex}{$|$}_{#1}} 
\newcommand{\Reg}{\operatorname{Reg}} 
\renewcommand{\div}{\operatorname{div}} 
\newcommand{\spec}{\sigma} 
\newcommand{\resSet}{\rho}
\newcommand{\Res}{R} 
\newcommand{\spb}{s} 
\newcommand{\gbd}{\omega_0} 
\theoremstyle{plain}
\theoremstyle{definition}
\newtheorem{definition}{Definition}[section]
\newtheorem{remark}[definition]{Remark}
\newtheorem*{remark*}{Remark}
\newtheorem*{remarks*}{Remarks}
\newtheorem{example}[definition]{Example}
\newtheorem{examples}[definition]{Examples}
\newtheorem{proposition}[definition]{Proposition}
\newtheorem{lemma}[definition]{Lemma}
\newtheorem{theorem}[definition]{Theorem}
\newtheorem{corollary}[definition]{Corollary}
\newtheorem{assumption}[definition]{Assumption}
\numberwithin{equation}{section} 
\title{Limit-case admissibility for positive infinite-dimensional systems}
\author{Sahiba Arora}
\address{Sahiba Arora, Department of Applied Mathematics, University of Twente, 217, 7500 AE, Enschede, the Netherlands}
\email{sahiba.arora@math.uni-hannover.de}
\author{Jochen Glück}
\address{Jochen Glück, University of Wuppertal, School of Mathematics and Natural Sciences, Gaußstr.\ 20, 42119 Wuppertal, Germany}
\email{glueck@uni-wuppertal.de}
\author{Lassi Paunonen}
\address{Lassi Paunonen, Mathematics Research Centre, Tampere University, Tampere, Finland}
\email{lassi.paunonen@tuni.fi}
\author{Felix L. Schwenninger}
\address{Felix L. Schwenninger, Department of Applied Mathematics, University of Twente, 217, 7500 AE, Enschede, the Netherlands}
\email{f.l.schwenninger@utwente.nl}
\date{\today}
\begin{document}

\begin{abstract}
    In the context of positive infinite-dimensional linear systems, we systematically study $L^p$-admissible control and observation operators with respect to the limit-cases $p=\infty$ and $p=1$, respectively. This requires an in-depth understanding of the order structure on the extrapolation space $X_{-1}$, which we provide. These properties of $X_{-1}$ also enable us to discuss when zero-class admissibility is automatic.  While those limit-cases are the weakest form of admissibility on the $L^p$-scale, it is remarkable that they sometimes directly follow from order theoretic and geometric assumptions. Our assumptions on the geometries of the involved spaces are minimal.
\end{abstract}
\keywords
{   
    Positive system; positive semigroup; Banach lattice; ordered Banach space; extrapolation space; admissible control operator; admissible observation operator; infinite-dimensional linear systems
}
\subjclass{93C25, 93C05, 93C28, 47D06, 46B42, 47B65}

\maketitle

\section{Introduction}
In this paper, we study the boundedness of linear operators 
\begin{equation*}
        \Phi_{\tau}:L^{p}([0,\tau];U)\to X, \qquad  
        u\mapsto x(\tau), \quad \tau>0
\end{equation*}
for some $p\in[1,\infty]$, arising in boundary control systems \cite{Greiner1987,Salamon1987,TucsnakWeiss2009} of the form 
\begin{align*}
    \dot{x}(t)={}&\mathfrak{A}x(t),\quad t>0,\quad x(0)=0,\\
    \mathfrak{B}x(t)={}&u(t).
\end{align*}
Here $\mathfrak{A}:\dom \mathfrak{A}\subset X\to X$ and $\mathfrak{B}:\dom \mathfrak{A}\to U$ are linear operators acting on Banach spaces $X$ and $U$. Under the assumptions that the restriction $A=\mathfrak{A}|_{\ker\mathfrak{B}}$ generates a $C_{0}$-semigroup and that $\mathfrak{B}$ has a bounded right-inverse, the above-mentioned boundedness can be rephrased in terms of \emph{admissible operators}, going back to Weiss \cite{Weiss1989a,Weiss1989b}. We also refer to \cite{EngelKramar17}, where the equivalent viewpoint used in this introduction was first taken. Indeed, the boundedness of $\Phi_{\tau}$ is equivalent to the property that for some $\lambda$ in the resolvent set $\rho(A)$ of $A$, the operator
\begin{equation*}
    \widetilde{\Phi}_{\lambda,\tau}:L^{p}([0,\tau];U)\to X,\qquad u\mapsto \int_{0}^{\tau}T(\tau-s)B_{\lambda}u(s)\mathrm{ds}
\end{equation*}
has range in $\dom A$ for some (hence all) $\tau>0$, where $B_{\lambda}=\left(\mathfrak{B}|_{\ker( \mathfrak{A}-\lambda)}\right)^{-1}$ is a well-defined bounded operator from $U$ to $X$,  see e.g.~\cite{Greiner1987} or \cite[Remark~2.7]{EngelKramar17}.  In that case, 
$\Phi_{\tau}u=x(\tau)=(A-\lambda)\widetilde{\Phi}_{\lambda,\tau}u$ for any $\lambda\in\rho(A)$.

Admissible operators are an indispensable tool in the study of infinite-dimensional systems, particularly in the context of well-posed systems \cite{JacobPartington2004,TucsnakWeiss2014,Staffans2005}. 
The definition of admissible (control) operators is usually given for systems in state-space form, 
\begin{equation}\label{eq:SSsys}
    \dot{x}(t)=Ax(t)+Bu(t),\quad x(0)=0;
\end{equation}
This class includes -- via the choice $B=(\lambda-A_{-1})B_{\lambda}$ with $A_{-1}$ being the extension of $A$ to the extrapolation space $X_{-1}$ corresponding to the semigroup (generated by $A$) -- boundary control systems if $B$ is only required to map $U$ to  $X_{-1}$, see \cite[Section~10.1]{TucsnakWeiss2009} or \cite[Section~2]{Schwenninger2020}. Admissibility for $p=2$ confines a rich theory in the Hilbert space setting, see, for instance, \cite{JacobPartington2001,Staffans2005,TucsnakWeiss2009}. While several results for $p\in(1,\infty)\setminus\{2\}$ exist  \cite{Haak2004,HaakLeMerdy2005,JacobPartingtonPott2014,Weiss1989b}, the case $p=\infty$ has been studied systematically only recently  \cite{JacobNabiullinPartingtonSchwenninger2018,JacobSchwenningerZwart2019, Wintermayr2019}. Note that the methodology for studying admissibility heavily relies on the specific context, such as whether $X$ is a Hilbert space or if the semigroup is analytic or extendable to a group.

In this note, we focus on the case $p=\infty$ and add another additional structure to the setting: positivity of the semigroup and the operator $B$ (or $B_{\lambda}$) -- in the sense of ordered Banach spaces. In fact, the positivity of $B$ can be characterised by the positivity of the operators $B_\lambda$ (Section~\ref{subsec:positive-B-via-boundary-operator}). 
While positivity is a well-studied concept for operator semigroups, its relation to admissibility, or general infinite-dimensional systems theory, is less understood. 
In \cite[Proposition~4.3]{EngelKramar17}, it is shown that positivity of $\Phi_{\tau}$ and $B_{\lambda}$ for sufficiently large $\lambda$  are equivalent provided that admissibility is assumed. 
Controllability of positive systems is studied in \cite{EngelKramar17, Gantouh2023}. 
Admissibility criteria for positive $B$ are given in \cite[Chapter~4]{Wintermayr2019} and \cite[Theorem~2.1]{Gantouh2022a}.
The first reference focuses on $L^\infty$- and $C$-admissibility by imposing assumptions mainly on the space $U$. 
In the second reference, a strong assumption on the semigroup is made which implies $L^{1}$-admissibility for all positive $B$'s. 
In Appendix~\ref{appendix:al-characterisation}, we prove that under a reasonable compactness condition, this assumption can only be satisfied if $X$ is an $L^{1}$-space. 
Our aim -- in contrast -- is to list assumptions on the state space such that admissibility follows from the assumed positivity in a rather automatic fashion. First results in this direction were derived by Wintermayr \cite{Wintermayr2019}. It should be noted that even the notion of positivity for control operators $B$ stemming from~\eqref{eq:SSsys} is nontrivial as positivity in the space $X_{-1}$ needs to be defined suitably \cite{BatkaiJacobVoigtWintermayr2018}. Motivated by this, we devote Section~\ref{sec:extrapolation-space} to a refined study of the order structure of $X_{-1}$, which is of interest in its own right. Under mild assumptions, the positive cone $X_+$ turns out to be a face in the cone $X_{-1,+}$. 
Moreover, we show that if the semigroup satisfies a suitable ultracontractivity assumption, then $X_{-1,+}$ is contained in an interpolation space between $X$ and $X_{-1}$ -- which immediately yields admissibility (Section~\ref{sec:L^r-admissibility}).

In addition, we also study  the formally dual notion of \emph{$L^1$-admissible observation operators} $C:\dom A\to Y$, for a Banach space $Y$, meaning that the mapping
\[
    \Psi_{\tau}:\dom A\to L^{1}([0,\tau];Y), x_{0}\mapsto CT(\cdot)x_{0}
\]
extends to a bounded linear operator on $X$. 
We present two types of results:~first, we assume conditions on the order structure of the spaces $X$ and $Y$, as well as positivity of the semigroup and observation operator. Another result, which is independent of any positivity assumptions and thus of interest in its own right, is Theorem~\ref{thm:zero-class-observation-sufficient}, stating that zero-class $L^{1}$-admissibility is automatic from $L^1$-admissibility if $X$ is reflexive and $Y$ an $\mathrm{AL}$-space.
The case of $L^{\infty}$-admissible control operators is treated in Section~\ref{sec:admissibility-control}, which also generalises several results by Wintermayr  \cite{Wintermayr2019}. The derived results relate to input-to-state stability of infinite-dimensional systems, a notion that has seen tremendous interest within the past decade; see \cite{Mironchenko2023} for an overview. 

We point out that in the limiting cases of the H\"older conjugates $p=1$ and $p=\infty$ the duality between admissible control and observation operators is subtle and does not allow for a one-to-one translation between them. 

In the last section, Section~\ref{sec:perturbations}, we apply our findings to well-known perturbation results. Certain assumptions of Sections~\ref{sec:admissibility-observation} and~\ref{sec:admissibility-control} are elaborated on in Appendix~\ref{sec:factorisation}. In the rest of the introduction, we recall basic concepts and fix our notations.

\subsection*{Banach spaces and operators}

The closed unit ball of a Banach space $X$ is denoted by $B_X$.
The space of bounded linear operators between two Banach spaces $X$ and $Y$ will be denoted by $\calL(X,Y)$. For a closed linear operator $A$, we denote its domain, kernel, and range by $\dom A$, $\ker A$, and $\Ima A$, and we write $A'$ for the dual operator acting on the dual space $X'$. The resolvent set and spectrum of $A$ are as usual denoted by $\resSet(A)$ and $\spec(A)$ and $\Res(\lambda,A) := (\lambda-A)^{-1}$ is the resolvent of $A$ at a point $\lambda \in \rho(A)$. 
The spectral bound of $A$ is given by $\spb(A) := \sup \{\re \lambda: \lambda \in \spec(A)\}$.

\subsection*{Operator semigroups}

We assume the reader is familiar with the theory of $C_0$-semigroups, for which we refer to the monograph \cite{EngelNagel2000}. The interpolation and extrapolation spaces associated with a $C_0$-semigroup play an important role in the context of admissibility. Let $(T(t))_{t\ge 0}$ be a $C_0$-semigroup on a Banach space $X$ with generator $A$ and fix $\lambda\in \resSet(A)$. The corresponding \emph{interpolation space}  
$X_1:= (\dom A, \norm{\argument}_1)$, 
where $\norm{\argument}_1$ is the graph norm,
and \emph{extrapolation space} -- defined as the completion
$X_{-1}:= (X, \norm{\argument}_{-1})^{\sim}$, where $\norm{\argument}_{-1}:=\norm{\Res(\lambda,A)\argument}$,
are both Banach spaces. 
For different choices of $\lambda$, the norms on $X_{-1}$ are equivalent. Further, $(T(t))_{t\ge 0}$ extends uniquely to a $C_0$-semigroup on $X_{-1}$, denoted by $(T_{-1}(t))_{t\ge 0}$. The generator $A_{-1}$ of $(T_{-1}(t))_{t\ge 0}$ has domain $\dom A_{-1}=X$ and is the unique extension of $A$ to a bounded operator from $X$ to $X_{-1}$. Important properties that we use tacitly throughout are the isomorphisms $(X_1)'=(X')_{-1}$ and $(X_{-1})'=(X')_1$, where $(X')_{-1}$ and $(X')_1$ refer to the dual semigroup on $X'$; see \cite[Corollary~3.1.17]{vanNeerven1992}.

\subsection*{Ordered Banach spaces and Banach lattices}

In concrete examples, the state space of a system is usually a function space such as $L^p$ and hence a Banach lattice. 
On the other hand, natural candidates for input or output spaces are often spaces of differentiable functions -- for instance, Sobolev spaces on the boundary -- that are ordered Banach spaces but not Banach lattices. 
Moreover, even if one starts with a Banach lattice $X$, the canonical order on the extrapolation space $X_{-1}$ may not render $X_{-1}$ a Banach lattice \cite[Example~5.1]{BatkaiJacobVoigtWintermayr2018}. 
For these reasons, we find it natural to set up the entire theory within the general framework of ordered Banach spaces.

A \emph{wedge} in a Banach space $X$ is a non-empty set $X_+ \subseteq X$ such that $\alpha X_+ + \beta X_+ \subseteq X_+$ for all scalars $\alpha, \beta \ge 0$.
We use the short-hand $-X_+:= \{-x : \, x \in X_+\}$.
A Banach space $X$ together with a closed wedge $X_+ \subseteq X$ is called a \emph{pre-ordered Banach space}.
The set $X_+$ is called the \emph{positive wedge} of $X$ and it induces a natural pre-order (i.e., a reflexive and transitive relation) on $X$:~$x\le y$ if and only $y-x \in X_+$.
The wedge $X_+$ is called a \emph{cone} if $X_+ \cap -X_+ = \{0\}$, which is equivalent to the pre-order $\le$ being anti-symmetric and thus a partial order. 
If $X_+$ is a cone, then we call $X$ an \emph{ordered Banach space} and $X_+$ the \emph{positive cone} of $X$.

Let $X$ be a pre-ordered Banach space. 
The set $X_+ - X_+ = \{x-y : \, x,y \in X_+ \}$ is a vector subspace of $X$ and coincides with the linear span of $X_+$.
The wedge $X_+$ is called \emph{generating} if $X=X_+ - X_+$ and \emph{normal} if there are $M\ge 1$ such that for each $x,y \in X_+$ the inequality $x\le y$ implies $\norm{x}\le M\norm{y}$. 
Every normal wedge is automatically a cone.
If $X$ is a pre-ordered Banach space we endow $\linSpan X_+ = X_+ - X_+$ with the norm
\begin{equation}
    \label{eq:norm-on-span-of-cone}
    \norm{x}_{X_+-X_+} 
    := 
    \inf\{\norm{y} + \norm{z} : \, y,z \in X_+ \text{ and } x = y-z\}.
\end{equation}
It is a complete norm on $X_+ - X_+$ stronger than the norm induced by $X$ \cite[Lemma~2.2]{ArendtNittka2009} and both norms coincide on $X_+$ (to avoid potential confusion, we note that a pre-ordered Banach space is called an ordered Banach space in \cite{ArendtNittka2009}). 
Thus, $\norm{\argument}_{X_+-X_+}$ turns $X_+-X_+$ into a pre-ordered Banach space and if the wedge $X_+$ is normal with respect to the norm on $X$, then it is also normal with respect to $\norm{\argument}_{X_+-X_+}$.

A subset $S$ of a pre-ordered Banach space $X$ is said to be \emph{order-bounded} if there exist $x,z\in X$ such that $S$ is contained in the so-called \emph{order interval} $[x,z] := \{y \in X : \, x \le y \le z\}$. 
A non-empty set $C\subseteq X_+$ is called a \emph{face} of $X_+$ if $[0,x]\subseteq C$ for all $x\in C$ and $C$ is also a wedge.
A subspace $V$ of $X$ is said to be \emph{majorizing} in $X$ if for each $x \in X$, there exists $v \in V$ such that $x \le v$.

If $X$ is a pre-ordered Banach space, then $X'_+ := \{x' \in X' : \, \langle x', x \rangle \ge 0 \text{ for all } x \in X_+\}$ is called the \emph{dual wedge} of $X_+$; 
it turns the dual space $X'$ into a pre-ordered Banach space.
It follows from the Hahn-Banach separation theorem that $\linSpan X_+$ is dense in $X$ if and only if $X'_+$ is a cone. 
Conversely, $\linSpan X'_+$ is weak${}^*$-dense in $X'$ if and only if $X_+$ is a cone. 
One can prove that $X_+$ is generating if and only if $X'_+$ is normal and that $X_+$ is normal if and only if $X'_+$ is generating \cite[Theorems~4.5 and 4.6]{KrasnoselskiiLifshitsSobolev1989}; 
the reference states the results for ordered Banach spaces there, but they remain true for pre-ordered Banach spaces.
Our main interest throughout the manuscript is in ordered (rather than pre-ordered) Banach spaces, since those spaces typically occur in applications. 
But since the dual and the bidual of an ordered Banach space need only be pre-ordered Banach spaces in general and since we make extensive use of duality theory in Section~\ref{sec:order-properties}, it makes the theory easier and clearer if one has the terminology of pre-ordered Banach spaces available.

If $e$ is a positive element of an ordered Banach space $X$, then the \emph{principal ideal generated by $e$} is defined as
\begin{equation}
    \label{eq:principal-ideal}
    X_e:= \bigcup_{\lambda>0} [-\lambda e,\lambda e].
\end{equation}
If the cone of $X$ is normal, then by \cite[Theorem~2.60]{AliprantisTourky2007}, $X_e$ becomes an ordered Banach space when equipped with the \emph{gauge norm}
$
    \norm{x}_e := \inf \{\lambda>0: x \in [-\lambda e,\lambda e] \}.
$
An element $e \in X_+$ is called a \emph{unit} of $X$ if $X_e = X$. In particular, $e$ is always a unit of the ordered Banach space $X_e$ itself. One can show that $e$ is a unit if and only if it is an interior point of $X_+$, even if $X_+$ is not normal, see e.g.\ \cite[Proposition~2.11]{GlueckWeber2020} for details. 
Whenever $X$ has a unit $e$, say with norm $1$, we shall endow $X$ with the equivalent norm $x \mapsto \max\{\norm{x}, \norm{x}_e\}$; in this case $e$ is the largest element of the closed unit ball and hence, the closed unit ball is \emph{upwards directed}, i.e., for all $x_1,x_2 \in B_X$ there exists $x\in B_X$ such that $x_1,x_2\le x$. 
As a consequence, the open unit ball of $X$ is also upwards directed, a property that will occur in several of our results.
\begin{center}
    \begin{table}
        \centering\small
        \begin{tblr}{%
            vlines, hlines,
             colspec = {ccc},
            }
            \SetCell{m, 2.5cm} \textbf{Properties} &
            \SetCell{m, 2cm} \textbf{Banach lattice terminology} & 
            \SetCell[c=2]{m, 5.5cm} \textbf{Typical example} 
            \\
            \SetCell{m, 2.5cm} {\textbf{Norm-bounded increasing nets are norm-convergent}} & 
            \SetCell{m, 2cm} KB-space &
            \SetCell[c=2]{m, 5.5cm}   $L^p$ for $p\in [1,\infty)$ 
            \\ 
            \SetCell{m, 3cm} \textbf{Cone is a face of bidual wedge} & 
            \SetCell{m, 2cm} Order continuous norm &
            \SetCell[c=2]{m, 5.5cm} $L^p$ for $p\in [1,\infty)$ and $c_0$ 
            \\
            \SetCell{m, 3cm} \textbf{Norm is additive on the cone} & 
            \SetCell{m, 2cm} AL-space &
            \SetCell[c=2]{m, 5.5cm} $L^1$ and $\calM(\Omega)$ for measurable $\Omega$ &
            \\
            \SetCell[r=2]{m, 3cm} \textbf{Open unit ball is upwards directed} & 
            \SetCell[r=2]{m, 2cm} AM-space &
            \SetCell{m, 1cm} with unit &
            \SetCell{c,m,4.5cm} {$\mathrm C(K)$ for compact $K$ and $L^\infty$}\\
            & 
            &
            \SetCell{m, 1cm} general &
            \SetCell{c,m, 4.5cm} $\mathrm C_0(L)$ for locally compact $L$\\
        \end{tblr} 
        \caption{Important properties of ordered Banach spaces}
        \label{table:properties}
    \end{table}
\end{center}
Banach lattices form a special class of ordered Banach spaces.
An element $z\in X$ is said to be the \emph{supremum} or the least upper bound of $x,y\in X$-- which we denote by $z=\sup\{x,y\}$ when it exists -- if $z \ge x,y$ and if $u \ge x,y$ for $u\in X$ implies $u \ge z$.
Analogously, one can define the infimum$\inf\{x,y\}$ of $x$ and $y$ as their greatest lower bound whenever it exists.
An ordered Banach space $X$ is called a \emph{Banach lattice} if any two elements have a supremum (equivalently:~infimum) and 
$
    \modulus{x}\le \modulus{y} \Rightarrow \norm{x} \le \norm{y}
$,
where $\modulus{x}=\sup\{x,-x\}$. The cone of a Banach lattice is always generating and normal. 
Common examples of Banach lattices  are the $L^p$ spaces for $p\in [1,\infty]$ and the spaces of continuous functions with the supremum norm. The theory of ordered Banach spaces and Banach lattices is classical, see \cite{Schaefer1974, Meyer-Nieberg1991, AliprantisBurkinshaw2006, AbramovichAliprantis2001}. 
Many ordered Banach spaces $X$ have additional geometric properties and if $X$ is even a Banach lattice, those properties often have special names. 
As those properties are useful in our results, we summarise them -- along with their name in the case that $X$ is a Banach lattice and  common examples --  in Table~\ref{table:properties}.

Let $T:X\to Y$ be a linear map between pre-ordered Banach spaces $X$ and $Y$. 
We say that $T$ is \emph{positive} and write $T\ge 0$ if $TX_+\subseteq Y_+$. 
By $\calL(X,Y)_+$ we denote the positive, bounded linear operators in $\calL(X,Y)$. If $X_+$ is generating, then positivity of $T$ implies boundedness \cite[Theorem~2.8]{ArendtNittka2009}. 
An operator $T \in \calL(X,Y)$ is positive if and only if its dual operator $T' \in \calL(Y',X')$ is positive.
A functional $\varphi \in X'$ is called \emph{strictly positive} if its kernel contains no positive non-zero element. A $C_0$-semigroup $(T(t))_{t\ge 0}$ on $X$ is called \emph{positive} if each operator $T(t)$ is positive. For the theory of positive $C_0$-semigroups, we refer the reader to  \cite{Nagel1986, BatkaiKramarRhandi2017, BattyRobinson1984}. 
A linear operator $J:X\to Y$ between pre-ordered Banach spaces is called \emph{bipositive} if for each $x \in X$ we have $Jx \ge 0$ if and only if $x\ge 0$. 
Intuitively, if there exists a bipositive map $J \in \calL(X,Y)$ and $J$ is injective (see Proposition~\ref{prop:span-determined}(a)), we can consider $X$ as a subspace of $Y$, endowed with the pre-order inherited from $Y$.
Thus we shall sometimes say that $X_+$ is a face of $Y_+$ to mean that $J(X_+)$ is a face of $Y_+$.

\subsection*{Complexifications}

Ordered Banach spaces and Banach lattices are theories over the real field. 
However, when ones uses spectral theory or analytic semigroups it is natural to work with complex scalars. 
To this end, one can use \emph{complexifications} of real Banach spaces as they are, for instance, described in \cite{MunozSarantopoulosTonge1999}, \cite[Appendix~C]{Glueck2016b}, and -- specifically for Banach lattices -- \cite[Section~II.11]{Schaefer1974}. 
The details do not cause any problems in our setting, so whenever we discuss spectral properties of a linear operator $A$ between two real Banach spaces $X$ and $Y$, we shall tacitly mean the property of the extension of $A$ to complexifications of $X$ and $Y$.

\section{Order properties of the extrapolation space $X_{-1}$}
    \label{sec:extrapolation-space}

To utilise any order assumptions on the  control operator $B\in \calL(U, X_{-1})$ that occurs in the system~\eqref{eq:SSsys}, it is crucial to understand the order structure of the space $X_{-1}$. In Section~\ref{sec:order-properties}, we first show in an abstract setting how order properties can be transferred between non-isomorphic ordered Banach spaces. Subsequently, in Section~\ref{sec:order-properties-corollaries}, we describe an order on $X_{-1}$ and show how various order properties of $X$ are carried over to $X_{-1}$.

\subsection{Transferring properties between (pre-)ordered Banach spaces}
    \label{sec:order-properties}

The setting of this subsection is two (pre-)ordered Banach spaces $Z$ and $X$ that are related via certain positive operators. 
We start with some properties of bipositive operators.

\begin{proposition}
    \label{prop:span-determined}
    Let $J:X\to Z$ be a bounded operator between pre-ordered Banach spaces $X$ and $Z$. 
    \begin{enumerate}[\upshape (a)]
        \item If $X$ is an ordered Banach space and $J$ is bipositive, then  $J$ is injective. 

        \item 
        The map $J'$ is bipositive if and only if $J(X_+)$ is a dense subset of $Z_+$.

        \item 
        If $Z_+$ is generating in $Z$ and $J$ is bipositive, injective, and has majorizing range,
        then $J'(Z'_+) = X'_+$
    \end{enumerate}
\end{proposition}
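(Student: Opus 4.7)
For part (a), I would observe that if $Jx=0$ then $Jx \ge 0$ and $-Jx = J(-x) \ge 0$, so bipositivity forces both $x$ and $-x$ to lie in $X_+$; since $X_+$ is a cone, $x=0$.

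For part (b), I would treat the two implications separately. For the easy direction, density of $J(X_+)$ in $Z_+$ already implies $J \ge 0$ (because $Z_+$ is closed), hence $J' \ge 0$; then, given $J'z' \in X'_+$ and an arbitrary $z \in Z_+$, I would pick $x_n \in X_+$ with $Jx_n \to z$ and pass to the limit in
\[
    \langle z', Jx_n\rangle = \langle J'z', x_n\rangle \ge 0
\]
to obtain $z' \in Z'_+$. For the converse, I would argue by contraposition: if some $z_0 \in Z_+$ fails to lie in $\overline{J(X_+)}$, note that the closure $\overline{J(X_+)}$ is still a wedge (the wedge axioms pass to closures by continuity of the vector space operations), so the Hahn--Banach separation theorem yields $z' \in Z'$ with $\langle z', z_0\rangle < 0$ and $\langle z', z\rangle \ge 0$ for all $z \in \overline{J(X_+)}$ (the separating constant can be taken to be $0$ by the standard homogeneity trick on wedges). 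The second inequality says $J'z' \in X'_+$, while the first says $z' \notin Z'_+$, contradicting bipositivity of $J'$.

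For part (c), the inclusion $J'(Z'_+) \subseteq X'_+$ is immediate from $J \ge 0$. For the reverse inclusion, the strategy is to realise a given $x' \in X'_+$ as $J'\Phi$ for some positive $\Phi \in Z'$. Using injectivity of $J$, define a linear functional $\varphi$ on the range $J(X)$ by $\varphi(Jx) := \langle x', x\rangle$; bipositivity of $J$ together with $x' \in X'_+$ ensures that $\varphi$ is positive when $J(X)$ is equipped with the order inherited from $Z$. Because $J(X)$ is majorizing in $Z$, the Kantorovich extension theorem (Hahn--Banach applied to the sublinear gauge $p(z) := \inf\{\langle x', x\rangle : z \le Jx\}$, noting that $0 \in J(X)$ forces $p(-z) \le 0$ for $z \in Z_+$) produces a positive linear extension $\Phi: Z \to \bbR$ of $\varphi$. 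By construction $J'\Phi = x'$, so it remains only to verify $\Phi \in Z'$.

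The main obstacle is precisely this continuity upgrade: Kantorovich's theorem only provides an algebraic extension. Here I would invoke the classical fact that on a Banach space with closed generating cone, every positive linear functional is automatically continuous (a consequence of Andô's decomposition $z = z_1 - z_2$ with $\|z_1\| + \|z_2\| \le M\|z\|$ combined with the closed graph theorem). Since $Z_+$ is assumed generating, this gives $\Phi \in Z'_+$ and completes the proof.
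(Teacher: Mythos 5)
Your proposal is correct and follows essentially the same route as the paper in all three parts: the same cone argument for (a), Hahn--Banach separation from the (closure of the) wedge $J(X_+)$ for (b), and the Kantorovich extension theorem on the majorizing range followed by automatic continuity of positive functionals on a space with closed generating cone for (c). The only cosmetic differences are that you normalise the separating constant to $0$ via homogeneity where the paper substitutes $x=0$ and $nx$, and that in (b) you should also record the trivial observation that bipositivity of $J'$ forces $J(X_+)\subseteq Z_+$ in the first place.
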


\begin{proof}
    (a)
    Let $x \in X$ such that $Jx = 0$. 
    Then $Jx \ge 0$ and $J(-x) \ge 0$, so the bipositivity of $J$ implies that $x \ge 0$ and $-x \ge 0$. 
    Hence, $x = 0$ since $X_+ \cap -X_+ = \{0\}$.
    
    (b) 
    Let $J'$ be bipositive. 
    As $J'$ is positive, so is $J$ and hence, $J(X_+) \subseteq Z_+$.
    Assume that the inclusion is not dense. 
    By the Hahn-Banach separation theorem, there exists $z \in Z_+$ and $z' \in Z'$ such that $\duality{z'}{z} < \duality{z'}{Jx} = \duality{J'z'}{x}$ for all $x \in X_+$. 
    Taking $x = 0$ we obtain $\duality{z'}{z}< 0$, so $z' \not\in Z'_+$. 
    Replacing $x \in X_+$ with $nx$ for large $n$ we get that $0 \le \langle J'z',x\rangle$, so $J'z'\ge 0$, contradicting the bipositivity of $J'$.
    
    Conversely, let $J(X_+)$ be  dense in $Z_+$.  In particular $J$ and hence, $J'$ is positive.
    Now let $z' \in Z'$ be such that $J'z' \ge 0$. 
    Therefore, $\langle z', Jx \rangle \ge 0$ for all $x \in X_+$. 
    By density of $J(X_+)$ in $Z_+$, we conclude that $\langle z', z \rangle \ge 0$ for all $z \in Z_+$, so $z' \ge 0$.

    (c)
    Note that $J'(Z'_+) \subseteq X'_+$, so we only need to show the reverse inclusion. For this, let $x' \in X'_+$. 
    The linear mapping $\varphi: J(X) \to \bbR$, $v \mapsto \langle x', J^{-1}v \rangle$ is well-defined and positive by the injectivity and bipositivity of $J$.
    Since $J(X)$ is by assumption majorizing in $Z$, the Kantorovich extension theorem \cite[Theorem~1.36]{AliprantisTourky2007} (which is formulated there for ordered vector spaces, but can be checked to also hold in the pre-ordered case) implies that $\varphi$ extends to a positive linear functional $z': Z \to \bbR$. 
    As $Z_+$ is generating,  $z'$ is automatically continuous \cite[Theorem~2.8]{ArendtNittka2009}, i.e., $z' \in Z'_+$. 
    Moreover, the equality $\langle z', Jx \rangle = \varphi(Jx) = \langle x', x \rangle$ for each $x \in X$ implies that $x'=J'z'\in J'(Z'_+)$.
\end{proof}

\begin{example}[Dual cone of $\mathrm C^k(\overline{\Omega})$]
    For $k\in \bbN$ and a bounded domain $\Omega \subseteq \bbR^d$, let $\mathrm C^k(\overline{\Omega})$ denote the space of all $k$-times continuously differentiable functions on $\Omega$ whose derivatives up to order $k$ extend continuously to $\overline{\Omega}$. 
    This is an ordered Banach space with the usual pointwise order and norm. We show that its dual cone is (more precisely, can be identified with) the set of all positive finite Borel measures on $\overline{\Omega}$. 

    Indeed, the canonical embedding $J: \mathrm C^k(\overline{\Omega}) \to \mathrm C(\overline{\Omega})$ is bipositive
    and has majorizing range because it contains the constant function $\one$. Moreover, by employing the density of $\mathrm C^k(\overline{\Omega})$ in $\mathrm C(\overline{\Omega})$ (which is true by the Stone-Weierstraß approximation theorem) and by using shifts by small multiples of the constant function $\one$, we even have that the cone $\mathrm C^k(\overline{\Omega})_+$ is dense in the cone $\mathrm C(\overline{\Omega})_+$.    
    So $J'$ is bipositive and injective according to Proposition~\ref{prop:span-determined}. 
    Hence, $J'$ is a bipositive embedding of $\mathrm C(\overline{\Omega})'$ (which can be identified with the space of finite Borel measures on $\overline{\Omega}$) into $\big(\mathrm C^k(\overline{\Omega})\big)'$. 
    Hence, Proposition~\ref{prop:span-determined}(c) shows that the dual cone of $\mathrm C^k(\overline{\Omega})$ consists of (to be precise, can be identified via $J'$ with) the set of all positive finite Borel measures on $\overline{\Omega}$. 
\end{example}

In our next result, we show that the first two properties in Table~\ref{table:properties} are preserved by the inverse of a positive bounded bijection (which might not be bipositive).

\begin{theorem}
    \label{thm:inherited-properties-of-extrapolation-space}
    Let $Z$ and $X$ be ordered Banach spaces and assume that there exists a bijection $T\in \calL(Z,X)_+$. 
    \begin{enumerate}[\upshape (a)]
        \item 
        If every positive, increasing, and norm-bounded net in $X$ is norm-convergent, then the same is true in $Z$.
        
        \item 
        If the cone of $X$ is a face of its bidual wedge, then the cone of $Z$ is also a face of its bidual wedge.
        
        \item 
        If the cone of $X$ is normal, then so is the cone of $Z$.
     \end{enumerate}
\end{theorem}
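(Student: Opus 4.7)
The overarching strategy is to leverage the open mapping theorem: since $T \in \calL(Z,X)$ is a bounded bijection between Banach spaces, its inverse $T^{-1} \in \calL(X,Z)$ is automatically bounded, even though it need not be positive. Each property in $Z$ will be obtained by pushing forward through $T$ (or its bidual), invoking the corresponding property in $X$, and pulling back with $T^{-1}$.

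For (a), I take a positive, increasing, norm-bounded net $(z_\alpha)$ in $Z$. The image $(Tz_\alpha)$ is then positive, increasing (since $T$ preserves order), and norm-bounded, so it converges in $X$ by hypothesis to some $x$. Continuity of $T^{-1}$ yields $z_\alpha = T^{-1}(Tz_\alpha) \to T^{-1}x$ in $Z$. Part (c) is an equally short chase: given $0 \le z_1 \le z_2$ in $Z$, positivity of $T$ gives $0 \le Tz_1 \le Tz_2$ in $X$, normality of $X_+$ with some constant $M$ yields $\norm{Tz_1} \le M \norm{Tz_2}$, and then $\norm{z_1} \le \norm{T^{-1}} \norm{Tz_1} \le M \norm{T^{-1}} \norm{T} \norm{z_2}$, so $Z_+$ is normal with constant $M\norm{T}\norm{T^{-1}}$.

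For (b), I consider the bidual operator $T'' \in \calL(Z'',X'')$. It is positive (being the bidual of a positive operator) and bijective (since the dual of a bounded bijection is again a bounded bijection, applied twice), and it agrees with $T$ under the canonical embeddings $Z \hookrightarrow Z''$ and $X \hookrightarrow X''$. Given $z \in Z_+$ and $z'' \in Z''_+$ with $0 \le z'' \le z$ in $Z''$, the element $T''z''$ satisfies $0 \le T''z'' \le T''z = Tz \in X_+$, so the face assumption on $X_+$ forces $T''z'' \in X_+ \subseteq X$. Since $T$ maps $Z$ onto $X$ and $T''$ is injective, this in turn forces $z'' \in Z$. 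To conclude that $z'' \in Z_+$, I invoke the identity $Z_+ = Z''_+ \cap Z$, which follows from the bipolar theorem applied to the closed wedge $Z_+$ in the duality $\langle Z, Z'\rangle$: both sides coincide with $\{z \in Z : \duality{z'}{z} \ge 0 \text{ for all } z' \in Z'_+\}$.

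The main delicacy is precisely this last step in (b) — tracking $T''z''$ back into $Z$ via bijectivity of $T''$ and extension of $T$, and then into $Z_+$ via the bipolar identity. Parts (a) and (c) are routine once one observes that the inverse of a positive bijection, although not positive, remains bounded.
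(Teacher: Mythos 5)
Your proposal is correct and follows essentially the same route as the paper: push the data forward through $T$ (or $T''$ in part (b)), apply the hypothesis on $X$, and pull back using the bounded inverse $T^{-1}$ (respectively the injectivity of $T''$). The only cosmetic differences are that in (c) you verify the normality constant directly rather than via the norm-boundedness of order intervals, and in (b) you make explicit the final step $Z_+ = Z''_+ \cap Z$ (bipositivity of the canonical embedding) that the paper leaves implicit.
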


\begin{proof}
    (a) 
    Let $(z_{\alpha})$ be an increasing norm-bounded net in $Z_+$. Then $(Tz_{\alpha})$ is increasing and norm-bounded in $X_+$. So, by assumption, there exists $x \in X$ such that $Tz_{\alpha}\to x$ in $X$. The bounded inverse  theorem now implies that $z_{\alpha}\to T^{-1}x$ in $Z$, as desired. 

    (b) 
    Let $k:X\to X''$ and $\widetilde k:Z\to Z''$ be the canonical embeddings. Let $z \in Z_+$ and $z'' \in Z''_+$ such that $0\le z''\le \widetilde k(z)$. We need to show that $z'' \in \widetilde k(Z)$. 
    Since $T$ is positive, so is the double dual $T'':Z''\to X''$. Therefore, $0\le T'' z'' \le T'' \widetilde k(z)=k(Tz)$. 
    As $k(X_+)$ is a face of $X''_+$, it follows that $T''z'' \in k(X)$.  
    Hence, $z'' \in (T'')^{-1}k(X)=\widetilde k(T^{-1}X)=\widetilde k(Z)$.

    (c) 
    First of all, recall that normality of the cone in an ordered Banach space is equivalent to every order interval being norm-bounded \cite[Theorem~2.40]{AliprantisTourky2007}. 
    Now, let $a, b \in Z$. 
    Then $T[a,b] \subseteq [Ta,Tb]$ due to the positivity of $T$ and the later set is norm-bounded because of the normality of the cone in $X$. 
    Hence, $[a,b] = T^{-1}T[a,b]$ is also norm-bounded since $T^{-1}$ is continuous.
\end{proof}

Regarding part~(b) of the previous theorem, we note in passing that if a cone is a face in a wedge, then it follows that the latter is also a cone.

For a bipositive operator $J:X\to Z$ between ordered Banach spaces, we give conditions in Theorem~\ref{thm:face-in-extrapolation-space} that ensure that $J(X_+)$ is a face of $Z_+$. 
For the proof we need the following extension result for linear functionals.

\begin{proposition}
    \label{prop:extending-a-functional}
    Let $X$ be a pre-ordered Banach space with a generating wedge and let $V\subseteq X$ be a vector subspace. Let $x'\in X'_+$ and $\varphi: V\to \bbR$ be linear. The following are equivalent.
    \begin{enumerate}[\upshape (i)]
        \item There exists $y'\in X'$ such that $0\le y'\le x'$ and $y'\restrict{V}=\varphi$,

        \item For all $v\in V$ and $w\in X_+$ the inequality $v\le w$ implies $\varphi(v)\le \duality{x'}{w}$.
    \end{enumerate}
\end{proposition}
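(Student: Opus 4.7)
The plan is to handle the easy direction (i)$\Rightarrow$(ii) by unpacking the order inequalities directly, and to obtain (ii)$\Rightarrow$(i) as a Hahn-Banach extension with respect to a cleverly chosen sublinear majorant built from $x'$.

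For (i)$\Rightarrow$(ii), given $v \in V$ and $w \in X_+$ with $v \le w$, I would simply write $\varphi(v) = \duality{y'}{v} \le \duality{y'}{w} \le \duality{x'}{w}$, the first inequality because $w-v \ge 0$ and $y' \ge 0$, the second because $w \ge 0$ and $x'-y' \ge 0$.

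For (ii)$\Rightarrow$(i), the idea is to define
\[
    p(x) := \inf\bigl\{\duality{x'}{w} : w \in X_+,\ x \le w\bigr\}, \qquad x \in X.
\]
Since $X_+$ is generating, every $x \in X$ can be written as a difference of two positive elements, so there exists $w \in X_+$ with $w \ge x$; thus the defining set is non-empty. As $\duality{x'}{w} \ge 0$ for $w \in X_+$, we also have $p(x) \in [0,\infty)$, so $p$ is $\bbR$-valued. One then checks positive homogeneity of $p$ (clear by the substitution $w \mapsto \lambda w$) and subadditivity (if $w_1 \ge x$ and $w_2 \ge y$ with $w_1,w_2 \in X_+$, then $w_1+w_2 \in X_+$ dominates $x+y$). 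Now condition~(ii) reads precisely as $\varphi \le p$ on $V$, so the Hahn-Banach theorem produces a linear $y':X \to \bbR$ extending $\varphi$ with $y' \le p$ on $X$.

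It remains to verify that $y'$ has the desired properties. For $x \in X_+$, the choice $w = 0$ in the definition of $p$ is admissible for $-x$, giving $p(-x) \le 0$, hence $-y'(x) = y'(-x) \le p(-x) \le 0$, so $y' \ge 0$. Similarly, for $x \in X_+$ the choice $w = x$ is admissible in the definition of $p(x)$, yielding $y'(x) \le p(x) \le \duality{x'}{x}$, so $y' \le x'$. Finally, since $y'$ is a positive linear functional and $X_+$ is generating, continuity of $y'$ is automatic by \cite[Theorem~2.8]{ArendtNittka2009}, so $y' \in X'_+$, completing the argument. The only nontrivial ingredient is the choice of the sublinear functional $p$; once its construction is in place, everything reduces to routine verifications and a standard application of Hahn-Banach.
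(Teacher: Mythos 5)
Your proposal is correct and follows essentially the same route as the paper: the easy direction is the same direct computation, and for the converse you construct exactly the same sublinear majorant $p(x) = \inf\{\duality{x'}{w} : w \in X_+,\ w \ge x\}$, apply Hahn--Banach, and verify positivity, the bound $y' \le x'$, and automatic continuity via the generating wedge in the same way. The only cosmetic difference is that the paper notes $p(-x)=0$ for $x \in X_+$ where you settle for $p(-x)\le 0$, which is all that is needed.
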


\begin{proof}
    ``(i) $\Rightarrow$  (ii)'':
    For each $v\in V, w\in X_+$ with $v\le w$, we have
    \[
        \varphi(v) = \duality{y'}{v}
                    \le \duality{y'}{w}
                    \le \duality{x'}{w},
    \]
    due to the positivity of $y'$. This proves (ii).

    ``(ii) $\Rightarrow$  (i)'': 
    Firstly, note that the functional $p: X\to [0,\infty)$ given by $x\mapsto \inf\{\duality{x'}{w}: w\in X_+, w\ge x\}$ is well-defined since $X_+$ is generating and can easily checked to be sublinear. 
    Moreover, by~(ii) one has $\varphi(v)\le p(v)$ for all $v \in V$.
    The Hahn-Banach theorem thus yields a linear functional $y':X\to \bbR$ that satisfies $y'\restrict{V}=\varphi$ and $y'(x)\le p(x)$ for all $x\in X$.
    For each $x\in X_+$, observe that
    $
        y'(-x)\le p(-x)=0,
    $
    so $y'$ is positive. 
    In particular, $y'\in X'$ since $X_+$ is generating \cite[Theorem~2.8]{ArendtNittka2009}.
    Finally, 
    $
        \duality{y'}{x} \le p(x) \le \duality{x'}{x}
    $
   for all $x \in X_+$ proves that $y'\le x'$.
\end{proof}

\begin{theorem}
    \label{thm:face-in-extrapolation-space}
    Let $X$ be an ordered Banach space with normal cone,
    let $Z$ be a pre-ordered Banach space, and let $J \in \calL(X,Z)_+$ have dense range.
    Assume that $(R_n)$ is a sequence in $\calL(Z,X)_+$ such that $(JR_n)$ and $(R_nJ)$ both converge to the identity operator in the weak operator topology on $\calL(Z)$ and $\calL(X)$ respectively.
    
    If  $X_+$ is a face of $X''_+$, then $J(X_+)$ is a face of $Z_+$ (in particular, $Z_+$ is a cone).
\end{theorem}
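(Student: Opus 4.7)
I split the statement into two parts.

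For the parenthetical claim that $Z_+$ is a cone, suppose $z \in Z_+ \cap (-Z_+)$. Then $R_n z \in X_+$ and $-R_n z = R_n(-z) \in X_+$ for each $n$, so $R_n z = 0$ since $X_+$ is a cone. Taking the weak limit in $J R_n z \to z$ then gives $z = 0$.

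The wedge property of $J(X_+)$ is immediate from the linearity of $J$, so the real work is the face property. Fix $x \in X_+$ and $z \in Z_+$ with $z \le Jx$; I want to produce $y \in X_+$ with $Jy = z$. The natural candidate is $y_n := R_n z \in X_+$, which satisfies $0 \le y_n \le R_n J x$ by positivity of $R_n$. Since $R_n J x \to x$ weakly in $X$, the sequence $(R_n J x)$ is norm-bounded, and normality of $X_+$ then forces $(y_n)$ to be norm-bounded as well. Identifying $X$ with its image in $X''$ via the canonical embedding $k$, Banach--Alaoglu supplies a subnet $(y_{n_\alpha})$ such that $k(y_{n_\alpha}) \to y''$ in the weak-$*$ topology of $X''$ for some $y'' \in X''$.

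Since the bidual wedge $X''_+$ is weak-$*$ closed (it is the dual of $X'_+$) and $k(R_{n_\alpha} Jx) \to k(x)$ weakly-$*$ in $X''$ (weak convergence in $X$ implying weak-$*$ convergence in $X''$), taking weak-$*$ limits in the chain $0 \le k(y_{n_\alpha}) \le k(R_{n_\alpha} Jx)$ yields $0 \le y'' \le k(x)$. At this point the face hypothesis enters: since $k(X_+)$ is a face of $X''_+$, there exists $y \in X_+$ with $y'' = k(y)$. Consequently $y_{n_\alpha} \to y$ weakly in $X$, and continuity of $J$ upgrades this to $J R_{n_\alpha} z = J y_{n_\alpha} \to J y$ weakly in $Z$. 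Combined with the hypothesis $J R_n z \to z$ weakly, which holds along the whole sequence and hence along the subnet, this forces $Jy = z$, so $z \in J(X_+)$.

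The main subtlety I foresee is the limit step: the same subnet index $\alpha$ must drive both $k(y_{n_\alpha})$ and $k(R_{n_\alpha} Jx)$, which is why I pass to the subnet only after recording the weak convergence of $(R_n Jx)$ and the norm boundedness of $(y_n)$; and preservation of the inequality in the weak-$*$ limit uses precisely that $X''_+$ is weak-$*$ closed, not any order-theoretic closure property of $X_+$ inside $Z$ (which is not assumed).
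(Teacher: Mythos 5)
Your proof is correct, but it reaches the key intermediate object $y''\in X''$ by a genuinely different route from the paper. The paper first turns $z$ into a linear functional $\varphi$ on the subspace $J'(Z')\subseteq X'$ and then invokes a Hahn--Banach-type extension result (Proposition~\ref{prop:extending-a-functional}), whose hypothesis is verified using the weak operator convergence of $(JR_n)$ and $(R_nJ)$; this yields $y''$ with $0\le y''\le k(x)$ and $J''y''=\widetilde k(z)$, and normality of $X_+$ enters there only to guarantee that $X'_+$ is generating. You instead work with the concrete approximants $y_n=R_nz$, use normality to bound $\lVert y_n\rVert$ by a multiple of $\lVert R_nJx\rVert$ (the latter bounded by uniform boundedness, since the sequence converges weakly), and extract $y''$ as a weak-$*$ cluster point via Banach--Alaoglu; the inequality $0\le y''\le k(x)$ survives the limit because $X''_+$ is the dual wedge of $X'_+$ and hence weak-$*$ closed, exactly as you note. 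From that point on the two arguments coincide: the face hypothesis converts $y''$ into $k(y)$ with $y\in X_+$, and the weak convergence $JR_nz\to z$ identifies $Jy$ with $z$. Your route is more elementary --- it bypasses the auxiliary extension proposition and all manipulation of $J'$, $J''$ and $\widetilde k$ --- and it never uses the dense-range hypothesis (which is in any case implied by the weak operator convergence of $(JR_n)$ to the identity, so nothing is lost). What the paper's functional-analytic detour buys is coherence with the duality machinery developed in the rest of Section~\ref{sec:order-properties}; your compactness argument is self-contained. Your direct verification of the parenthetical claim that $Z_+$ is a cone is also fine and slightly more explicit than the paper's remark that a wedge containing a cone as a face is itself a cone.
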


We point out that the map $J$ in the theorem is automatically bipositive: if $Jx \ge 0$, then $x$ is the weak limit of $(R_n J x) \subseteq X_+$, hence positive. Moreover, Example~\ref{exa:not-a-face} shows that the assumption that $X_+$ is a face of $X''_+$ cannot be dropped in Theorem~\ref{thm:face-in-extrapolation-space}.

\begin{proof}[Proof of Theorem~\ref{thm:face-in-extrapolation-space}]
    We denote by $k:X\to X''$ and $\widetilde k:Z\to Z''$ the canonical embeddings. 
    Then the following diagram commutes:
    \begin{center}
        \begin{tikzcd}
            X      \arrow{rrr}{J} \arrow{d}{k} & & & Z \arrow{d}{\widetilde k} \\
            X''    \arrow{rrr}{J''}            & & & Z''
        \end{tikzcd}
    \end{center}
    Let $x\in X_+$ and $z\in Z_+$ be such that $0\le z\le Jx$; we need to find $y \in X_+$ such that $Jy = z$. 
    To this end, we first find a $y'' \in X''_+$ such that $J''y'' = \widetilde k(z)$. 
    
    For the first step, define $\varphi:J'(Z')\to \bbR$ as 
    $
        v' \mapsto \langle \widetilde k(z), (J')^{-1}v'\rangle.
    $
    This is well-defined since $J'$ is injective as $J$ is assumed to have dense range.
    We want to extend $\varphi$ to a functional $y'' \in X''_+$ by employing Proposition~\ref{prop:extending-a-functional}.
    So let $v'=J'z'\in J'(Z') \subseteq X'$ and fix $w' \in X'_+$ such that $w'\ge v'$. Then
    \[
        \varphi(v')  
        = 
        \duality{\widetilde k(z)}{z'}
        = 
        \duality{z'}{z}
        = 
        \lim_{n\to\infty} \duality{z'}{JR_nz}
        = 
        \lim_{n\to\infty} \duality{v'}{R_nz}.
    \]
    Positivity of $z$ and $w'$ thus implies that
    \[
        \varphi(v') 
        \le 
        \limsup_{n\to\infty} \duality{w'}{R_nz}
        \le 
        \limsup_{n\to\infty} \duality{w'}{R_nJx}
        = 
        \duality{w'}{x}
        = 
        \duality{k(x)}{w'}.
    \]
    Thus by Proposition~\ref{prop:extending-a-functional} -- which is applicable since $X'_+$ is generating as $X_+$ is normal \cite[Theorem~2.40]{AliprantisTourky2007} -- there exists $y''\in X''$ such that $0\le y''\le k(x)$ and $y''\restrict{J'(Z')}=\varphi$. Now, $\widetilde k(z)=J''y''$, because for each $z'\in Z'$, we have
    \[
        \duality{\widetilde k(z)}{z'} = \varphi(J'z') = \duality{y''}{J'z'}= \duality{J''y''}{z'}.
    \]

    Now, we show that $y''$ even stems from an element of $X$. Since $k(X_+)$ is a face of $X''_+$ and $0\le y''\le k(x)$, there exists $y \in X$ such that $y''=k(y)$. As $k$ is bipositive, it follows that $0\le y\le x$. 
    Moreover, as $\widetilde k(z) = J''k(y) = \widetilde k (Jy)$, we conclude $z = Jy$.
\end{proof}

If $X$ and $D$ are Banach spaces, then we know as a consequence of Hahn-Banach theorem that every $T \in \calL(X,D)$ satisfies the norm equality $\norm{T'}_{D' \to X'} = \norm{T}_{X \to D}$. 
The following theorem can be interpreted as a variation of the inequality ``$\le$'' in this equality. 
For an intuition about the theorem, we refer to the subsequent Examples~\ref{exa:dual-estimate-on-cone-only}.

\begin{theorem}
    \label{thm:dual-estimate-on-cone-only}
    Let $X,D,E$, and $\tilde E$ be ordered Banach spaces such that the cone $\tilde E_+$ is generating and normal. 
    Consider positive bounded linear operators 
    \begin{center}
        \begin{tikzcd}
                           &                                            & E        \\[-15pt]
            X \arrow{r}{T} & D \arrow{ru}{J} \arrow[swap]{rd}{\tilde J} &          \\[-15pt]
                           &                                            & \tilde E 
        \end{tikzcd}
    \end{center}
    such that $\tilde J$ is bipositive and its range $\Ima \tilde J$ is majorizing in $\tilde E$.
    Then
    \[
        \norm{(JT)'}_{\linSpan{E'_+} \to X'} 
        \le 
        c \; 
        \lVert \tilde J T \rVert_{X \to \tilde E}
    \]
    for a number $c \ge 0$ that might depend on all involved spaces and operators except for $X$ and $T$. 
    Here, $\linSpan E'_+ = E'_+ - E'_+$ is endowed with the norm $\norm{\argument}_{E'_+-E'_+}$.
\end{theorem}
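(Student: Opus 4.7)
The strategy is to rewrite $(JT)'=T'J'$ and, for each $e'\in E'_+$, lift $J'e'\in D'_+$ along $\tilde J$ to a positive functional $\tilde e'\in\tilde E'_+$ with $\tilde J'\tilde e'=J'e'$. The identity $T'J'e'=T'\tilde J'\tilde e'=(\tilde J T)'\tilde e'$ then yields $\norm{(JT)'e'}_{X'}\le\norm{\tilde J T}\,\norm{\tilde e'}_{\tilde E'}$, so everything reduces to controlling $\norm{\tilde e'}$. Existence of some such $\tilde e'$ is immediate from Proposition~\ref{prop:span-determined}(c) applied to $\tilde J$, since $\tilde J$ is bipositive (hence injective by~(a)), has majorizing range, and $\tilde E_+$ is generating, so $\tilde J'(\tilde E'_+)=D'_+$. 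The substantive task is to produce a \emph{quantitative} norm bound on $\tilde e'$, independent of $X$, $T$, and $e'$.

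To this end, I would rerun the Kantorovich-type argument underlying Proposition~\ref{prop:span-determined}(c) with an explicit dominating sublinear functional. A standard Baire-category / And\^o-type argument---using that $\tilde J(D_+)$ majorizes the generating and normal cone $\tilde E_+$---yields a constant $M>0$ such that every $\tilde e\in\tilde E_+$ admits $d\in D_+$ with $\tilde J d\ge\tilde e$ and $\norm{d}_D\le M\norm{\tilde e}_{\tilde E}$. Setting
\[
p(\tilde e):=\inf\{\duality{e'}{Jd}:\,d\in D_+,\ \tilde J d\ge\tilde e\},\qquad \tilde e\in\tilde E,
\]
one verifies: $p$ is sublinear with $p(0)=0$; $p(-\tilde e)\le 0$ for $\tilde e\in\tilde E_+$ (take $d=0$); $p$ dominates $\varphi_{e'}:\tilde J d\mapsto\duality{e'}{Jd}$ on $\tilde J(D)$, using bipositivity of $\tilde J$ together with positivity of $J$ and $e'$; and $p(\tilde e)\le M\norm{J}\norm{e'}\norm{\tilde e}$ for $\tilde e\in\tilde E_+$, hence---via the generating-and-normal decomposition of $\tilde e\in\tilde E$ into positive parts---$|p(\tilde e)|\le c_0\norm{e'}_{E'}\norm{\tilde e}_{\tilde E}$ on all of $\tilde E$, with a constant $c_0=c_0(\tilde E, J, \tilde J)$. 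Hahn-Banach then extends $\varphi_{e'}$ to a linear functional $\tilde e'\le p$ on $\tilde E$; this functional is automatically positive (from $\tilde e'(\tilde e)\ge -p(-\tilde e)\ge 0$ for $\tilde e\in\tilde E_+$), continuous, satisfies $\tilde J'\tilde e'=J'e'$, and obeys $\norm{\tilde e'}_{\tilde E'}\le c_0\norm{e'}_{E'}$.

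Combining, $\norm{(JT)'e'}_{X'}\le\norm{\tilde J T}\,\norm{\tilde e'}\le c_0\norm{\tilde J T}_{X\to\tilde E}\norm{e'}_{E'}$ for $e'\in E'_+$. For general $e'\in\linSpan E'_+$, decomposing $e'=e'_1-e'_2$ with $e'_i\in E'_+$ and $\norm{e'_1}+\norm{e'_2}$ arbitrarily close to $\norm{e'}_{E'_+-E'_+}$, and applying the bound to each summand, gives the claimed inequality with $c=c_0$. The main obstacle is the quantitative Kantorovich step: Proposition~\ref{prop:span-determined}(c) only provides existence of the lift $\tilde e'$, and the uniform control of its norm is where the generating-plus-normal hypothesis on $\tilde E_+$ is indispensable, both for the uniform majorizing constant $M$ and for transferring bounds on $\tilde E_+$ to bounds on all of $\tilde E$.
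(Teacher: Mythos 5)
Your argument is correct, but it reaches the estimate by a genuinely different mechanism than the paper. The paper works entirely on the dual, operator-level side: it combines Proposition~\ref{prop:span-determined}(a) and~(c) with normality of $\tilde E_+$ to see that $\tilde J'$ maps $\tilde E'$ onto $\linSpan D'_+$, then applies the closed graph and open mapping theorems to produce a norm-bounded set $S \subseteq \tilde E'$ with $\tilde J'S$ covering the unit ball of $\linSpan D'_+$ (whence $\norm{T'}_{\linSpan D'_+ \to X'} \le \tilde c\, \lVert \tilde J T\rVert$), and finally composes with the bounded positive map $J' \colon \linSpan E'_+ \to \linSpan D'_+$. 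You instead lift each individual positive functional $J'e'$ to a \emph{positive} functional $\tilde e' \in \tilde E'_+$ with a quantitative norm bound, by running the Kantorovich/Hahn--Banach extension against an explicit sublinear majorant $p$; the uniformity of your bound comes from an And\^o-type majorization constant on the primal side rather than from the open mapping theorem on the dual side. Both routes ultimately rest on a Baire-category ingredient, and the one substantive gap in your sketch is exactly there: the existence of $M$ with $\norm{d}_D \le M \norm{\tilde e}$ is the open-mapping/And\^o theorem for the restriction of $(d,y) \mapsto \tilde J d - y$ to the closed cone $D \times \tilde E_+$, which is standard but not entirely trivial and should be proved or cited precisely, since Proposition~\ref{prop:span-determined}(c) alone yields only the unquantified lift. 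What your approach buys: the lifted functionals are positive, the intermediate space $\linSpan D'_+$ is bypassed, and -- as far as I can see -- normality of $\tilde E_+$ is never actually used (the uniform decomposition you invoke needs only that the cone is closed and generating), so your proof would mildly generalize the statement. The remaining verifications (sublinearity and finiteness of $p$, domination of $\varphi_{e'}$ on $\tilde J(D)$ via bipositivity of $\tilde J$ and positivity of $J$ and $e'$, positivity and continuity of the extension, and the final passage from $E'_+$ to $\linSpan E'_+$ via the definition of $\norm{\argument}_{E'_+-E'_+}$) all check out.
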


Note that the space $D$ does not occur in the conclusion of the theorem. 
We are interested in the mappings $\tilde J T: X \to \tilde E$ and $(JT)': E' \to X'$ and $D$ is an auxiliary space to relate their properties. Before giving the proof of the theorem, we illustrate two situations where the assumptions of the theorem are fulfilled:

\begin{examples}
    \label{exa:dual-estimate-on-cone-only}
    (a)
    Let $p \in [1,\infty]$, let $T\in \calL( L^p(\bbR) , L^\infty(\bbR))_+$, and assume that the range of $T$ lies in   
    $\operatorname{Lip}(\bbR)$, the space of all scalar-valued Lipschitz continuous functions on $\bbR$, which is an ordered Banach space when endowed with the pointwise order and the norm $\norm{f}_{\operatorname{Lip}} := \modulus{f(0)} + \operatorname{Lip}(f)$, where $\operatorname{Lip}(f)$ is the Lipschitz constant of $f$.
    Then Theorem~\ref{thm:dual-estimate-on-cone-only} implies that there exists $c>0$, independent of $T$ such that
    \[
        \norm{T'}_{\linSpan(\operatorname{Lip}(\bbR)'_+) \to (L^p)'} \le c \norm{T}_{L^p \to L^\infty}.
    \]

    To see this, let $X = L^p(\bbR)$, $E = \operatorname{Lip}(\bbR)$, $\tilde E = L^\infty(\bbR)$ and $D = E \cap \tilde E$ with the norm $\norm{f}_D := \norm{f}_{\operatorname{Lip}} + \norm{f}_\infty$ in the theorem, let $J$ and $\tilde J$ be the canonical embeddings, and note that $T$ is bounded from $L^p(\bbR)$ to $D$ by the closed graph theorem. Moreover, $D$ is majorizing in $\tilde E$ as the former contains the constant $\one$ function.
    
    (b)
    Let $\Omega$ denote the open unit ball in $\bbR^d$ and the consider Banach lattices $X=L^2(\Omega)$ and $\widetilde E=L^\infty(\Omega)$. Let $E$ and $D$ be ordered Banach spaces $H^4(\Omega) \cap H_0^2(\Omega)$ and $E\cap \widetilde E$ respectively, where the latter is equipped with the norm $\norm{\argument}_D:= \norm{\argument}_E + \norm{\argument}_{\infty}$; note that the inclusion $E \subseteq \tilde E$ holds if $d<8$, but not otherwise.
    
    The bi-Laplace operator $A:= -\Delta^2$ with domain $E$ generates a  \emph{uniformly eventually positive} analytic $C_0$-semigroup $(T(t))_{t\ge 0}$ on $X$ by \cite[Theorem~4.4]{DanersGlueck2018}, i.e., there exists $t_0 \ge 0$ such that $T(t)\ge 0$ for all $t\ge t_0$. Furthermore, $\dom A^n \subseteq \widetilde E$ for large $n \in \bbN$, so $T(t)X \subseteq D$ for all $t>0$. As in~(a), it can be verified that the assumptions of  Theorem~\ref{thm:dual-estimate-on-cone-only} are fulfilled and whence there exists $c> 0$ such that for all $t\ge t_0$,
    \[
        \norm{ T(t)'}_{\linSpan{E'_+} \to L^2} \le c \norm{ T(t) }_{L^2 \to L^\infty}.
    \]
     
\end{examples}

\begin{proof}[Proof of Theorem~\ref{thm:dual-estimate-on-cone-only}]
    As $\tilde E_+$ is generating in $\tilde E$ and $\tilde J(D)$ is majorizing in $\tilde E$, one can apply Proposition~\ref{prop:span-determined}(a) and~(c) to get $\tilde J'(\tilde E'_+) = D'_+$. 
    On the other hand, the dual wedge $\tilde E'_+$ is also generating owing to the normality of the cone $\tilde E_+$ \cite[Theorem~2.26]{AliprantisTourky2007} and thus $\tilde J'\tilde E' = D'_+ - D'_+ = \linSpan D'_+$. 
    Note that $\tilde J'$ is continuous from $\tilde E'$ to $\linSpan D'_+$ due to the closed graph theorem; here $\linSpan D'_+$ is endowed with the norm $\norm{\argument}_{D'_+-D'_+}$.
    Hence, the open mapping theorem implies that there exists a set $S \subseteq \tilde E'$ that is norm-bounded by some $\tilde c \ge 0$  such that $\tilde J'S$ covers the unit ball in $\linSpan D'_+$. 
    So,
    \[
        \norm{T'}_{\linSpan D'_+ \to X'} 
        \le 
        \tilde c \norm{T'\tilde J'}_{\tilde E' \to X'}
        =
        \tilde c \norm{\tilde J T}_{X \to \tilde E}
        .
    \]
    On the other hand, as $J'$ is positive it maps $\linSpan E'_+$ into $\linSpan D'_+$ 
    and it is a continuous operator between those Banach spaces by the closed graph theorem.
    Thus,
    \[
        \norm{(JT)'}_{\linSpan E'_+ \to X'} 
        \le 
        \norm{T'}_{\linSpan D'_+ \to X'} \norm{J'}_{\linSpan E'_+ \to \linSpan D'_+},
    \]
    which completes the proof when combined with the previous estimate.
\end{proof}

\subsection{The order on $X_{-1}$}
    \label{sec:order-properties-corollaries}

Let $(T(t))_{t\ge 0}$ be a positive $C_0$-semigroup on an ordered Banach space $X$. 
Throughout we endow the extrapolation space $X_{-1}$ with the cone 
\[
    X_{-1,+} := \overline{X_+}^{\norm{\argument}_{-1}},
\]
i.e., the norm closure of $X_+$ in $X_{-1}$, 
and consider the partial order that is induced by $X_{-1,+}$.
This order on $X_{-1}$ was, for the case where $X$ is a Banach lattice, introduced in \cite{BatkaiJacobVoigtWintermayr2018} in the context of perturbation theorems for positive semigroups. 
A number of fundamental properties of $X_{-1,+}$ -- in particular that it is indeed a cone -- are proved in \cite[Remark~2.2 and Proposition~2.3]{BatkaiJacobVoigtWintermayr2018}. 
We now show that the same properties remain true on ordered Banach spaces. 
Most arguments are similar, but we include the details for the convenience of the reader. 
Our proof of the equality $X_{-1,+}\cap (-X_{-1,+})=\{0\}$ is a bit different since the cone $X_+$ need not be normal in the following proposition.

\begin{proposition}
    \label{prop:order:extrapolation}
    Let $X$ be an ordered Banach space and let $(T(t))_{t\ge 0}$ be a positive $C_0$-semigroup on $X$.
    \begin{enumerate}[\upshape (a)]
        \item 
        The set $X_{-1,+}$ is a cone and hence, $(X_{-1},X_{-1,+})$ is an ordered Banach space.

        \item 
        The canonical embedding $X \hookrightarrow X_{-1}$ is bipositive, i.e., $X_+ = X_{-1,+} \cap X$.

        \item 
        For any $\lambda > \spb(A)$, the resolvent $\Res(\lambda, A_{-1})$ is positive from $X_{-1}$ to $X$.
    \end{enumerate}
\end{proposition}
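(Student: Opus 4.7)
The plan is to establish the three parts in the order (c), (a), (b), since both (a) and (b) follow readily once (c) is in hand. The main tool throughout is the resolvent $\Res(\lambda, A_{-1})$, which for any $\lambda \in \resSet(A) = \resSet(A_{-1})$ is a topological isomorphism from $X_{-1}$ onto $X$ (via the bounded inverse theorem applied to $\lambda - A_{-1} \in \calL(X, X_{-1})$) and which coincides with $\Res(\lambda, A)$ on $X$.

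For (c) I would begin by invoking the standard fact that on an ordered Banach space a positive $C_0$-semigroup satisfies $\Res(\lambda, A) \ge 0$ on $X$ for every real $\lambda > \spb(A)$; this is seen by combining the Laplace-transform representation of the resolvent, valid for $\lambda > \gbd$, with a Taylor-series and connectedness argument that propagates positivity down to $\spb(A)$. Now fix $z \in X_{-1,+}$ and pick a sequence $(x_n) \subseteq X_+$ with $\norm{x_n - z}_{-1} \to 0$. Then $\Res(\lambda, A_{-1}) x_n = \Res(\lambda, A) x_n \in X_+$, and by continuity of $\Res(\lambda, A_{-1}): X_{-1} \to X$ the sequence converges to $\Res(\lambda, A_{-1}) z$ in the norm of $X$; since $X_+$ is closed in $X$, the limit lies in $X_+$, which proves (c).

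Parts (a) and (b) then drop out as corollaries. For (a), take any $z \in X_{-1,+} \cap (-X_{-1,+})$. By (c), both $\Res(\lambda, A_{-1}) z$ and its negative lie in $X_+$, forcing $\Res(\lambda, A_{-1}) z = 0$; injectivity of $\Res(\lambda, A_{-1})$ then gives $z = 0$. For (b), the inclusion $X_+ \subseteq X_{-1,+} \cap X$ is immediate from the definition. Conversely, given $x \in X_{-1,+} \cap X$, part (c) implies $\lambda \Res(\lambda, A) x \in X_+$ for every real $\lambda > \spb(A)$; letting $\lambda \to \infty$ and using the standard convergence $\lambda \Res(\lambda, A) x \to x$ in the norm of $X$, together with the closedness of $X_+$ in $X$, shows $x \in X_+$.

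The main subtlety---and the reason the authors flag a departure from the Banach-lattice treatment of \cite{BatkaiJacobVoigtWintermayr2018}---is that $X_+$ is not assumed to be normal, so order intervals in $X$ need not be norm-bounded and several arguments available in the lattice setting are lost. The resolvent-based route proposed above sidesteps this obstacle, since it relies only on the closedness of $X_+$ in $X$ and on the operator-theoretic properties of $\Res(\lambda, A_{-1})$.
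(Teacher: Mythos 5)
Your proposal is correct and follows essentially the same route as the paper: positivity of $\Res(\lambda,A)$ for $\lambda>\spb(A)$ via the Laplace transform plus a Taylor-expansion/connectedness argument, approximation of elements of $X_{-1,+}$ by sequences in $X_+$ to get (c), and then (a) and (b) as consequences using injectivity of the resolvent and the convergence $\lambda\Res(\lambda,A)x\to x$, respectively. The only cosmetic omission is that for (a) one should also record the (trivial) fact that $X_{-1,+}$, being the closure of a wedge, is itself a closed wedge, so that only the antisymmetry $X_{-1,+}\cap(-X_{-1,+})=\{0\}$ remains to be checked.
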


\begin{proof}
    (c) 
    Let $\lambda > \spb(A) = \spb(A_{-1})$. 
    We need to prove that $\Res(\lambda,A_{-1})X_{-1,+} \subseteq X_+$.
    So let $x \in X_{-1,+}$ and choose a sequence $(x_n) \subseteq X_+$ that converges to $x$ in $X_{-1}$. Firstly, if $\lambda>\gbd(A)$, then the operator $\Res(\lambda,A)$ is positive from $X$ to $X$
    owing to  the Laplace transform representation of the resolvent. Now, the positivity extends from the interval $(\gbd(A), \infty)$ to the interval $(\spb(A), \infty)$ via the Taylor expansion of the resolvent and a connectedness argument; see for instance \cite[Proposition~2.1(a)]{GlueckMironchenko2025} for details.
    Therefore, $\Res(\lambda,A)x_n \in X_+$ for each index $n$. 
    As $\Res(\lambda,A_{-1})$ is continuous from $X_{-1}$ to $X$, it follows that
    \[
        \Res(\lambda,A_{-1}) x 
        = 
        \lim_{n \to \infty} \Res(\lambda,A_{-1}) x_n 
        = 
        \lim_{n \to \infty} \Res(\lambda,A) x_n 
        \in 
        X_+
        ,
    \]
    where both limits are taken in the Banach space $X$. 
    
    (a)
    It is easy to see that $X_{-1,+}$ is closed, convex, and that $\alpha X_{-1,+} \subseteq X_{-1,+}$ for each $\alpha \in [0,\infty)$, 
    so one only has to show that $X_{-1,+} \cap -X_{-1,+} = \{0\}$.
    To this end, suppose $x \in X_{-1}$ satisfies $\pm x \ge 0$. 
    Fixing $\lambda>\spb(A)$,
    it follows from~(c) that $\pm \Res(\lambda,A_{-1})x \in X_+$. 
    Since $X_+$ is a cone, this implies that $\Res(\lambda,A_{-1})x = 0$, 
    and so $x=0$ by injectivity of the resolvent operator.
    
    (b)
    Obviously, $X_+ \subseteq X_{-1,+} \cap X$.
    To establish the converse inclusion, we let $x \in X_{-1,+} \cap X$.
    It follows from~(c) that $\lambda \Res(\lambda,A)x = \lambda \Res(\lambda,A_{-1})x \in X_+$  for all $\lambda > \max\{\spb(A),0\}$. Moreover, since $x$ is in $X$ one has $\lambda \Res(\lambda,A)x \to x$ in $X$ as $\lambda \to \infty$. 
    As $X_+$ is closed in $X$, we even get that $x\in X_+$.
\end{proof}

Clearly, the extrapolated semigroup $(T_{-1}(t))_{t\ge 0}$ on $X_{-1}$ leaves $X_{-1,+}$ invariant.
Since $X_{-1}$ is, by the previous proposition, an ordered Banach space with respect to this cone, we can rephrase this by saying that the extrapolated semigroup is  positive.

Even if $X$ is a Banach lattice, the cone $X_{-1,+}$ need not be generating in $X_{-1}$ \cite[Examples~5.1 and~5.3]{BatkaiJacobVoigtWintermayr2018}. In particular, $X_{-1}$ is usually not a Banach lattice. 
One can show that the span of $X_{-1,+}$ is often a Banach lattice, though \cite[Section~4]{AroraGlueckSchwenninger2025}, but we will not use the observation in what follows. 
Nevertheless, for any $\lambda>\spb(A)$, we know from Proposition~\ref{prop:order:extrapolation} that the bijection $\Res(\lambda, A_{-1})\in \calL(X_{-1}, X)_+$. Hence we can apply the results proved in Section~\ref{sec:order-properties} to $Z=X_{-1}$ to obtain various order properties of $X_{-1}$.
Firstly, Theorem~\ref{thm:inherited-properties-of-extrapolation-space} readily implies the following result (cf.\ Table~\ref{table:properties}). Thereafter, we obtain sufficient conditions for the cone $X_+$ to be a face of the cone $X_{-1,+}$.

\begin{corollary}
    \label{cor:inherited-properties-of-extrapolation-space}
    Let $(T(t))_{t\ge 0}$ be a positive $C_0$-semigroup on an ordered Banach space $X$.
    \begin{enumerate}[\upshape (a)]
        \item 
        If every positive, increasing, and norm-bounded net in $X$ is norm-convergent, then the same is true in $X_{-1}$.

        \item
        If the cone of $X$ is a face of the bidual wedge, then the same is true for the cone of $X_{-1}$.

        \item 
        If the cone of $X$ is normal, then so is the cone of $X_{-1}$.
     \end{enumerate}
\end{corollary}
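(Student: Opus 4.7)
The plan is to recognise this corollary as a direct specialisation of Theorem~\ref{thm:inherited-properties-of-extrapolation-space}, applied to the resolvent considered as a map from $X_{-1}$ back into $X$. The only preparatory observation needed is that such a resolvent furnishes a positive bounded bijection, and this has already been secured by Proposition~\ref{prop:order:extrapolation}.

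More precisely, I would fix some $\lambda > \spb(A)$ and set $T := \Res(\lambda,A_{-1})$. Since $A_{-1}$ has domain $\dom A_{-1} = X$, the operator $T$ is a bounded linear bijection from $X_{-1}$ onto $X$; by Proposition~\ref{prop:order:extrapolation}(c) it maps $X_{-1,+}$ into $X_+$, so $T \in \calL(X_{-1},X)_+$. Moreover, Proposition~\ref{prop:order:extrapolation}(a) ensures that $(X_{-1},X_{-1,+})$ is an ordered Banach space, so the hypotheses of Theorem~\ref{thm:inherited-properties-of-extrapolation-space} are met with $Z = X_{-1}$ in the role of the ``source'' space and the original $X$ in the role of the ``target'' space. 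Parts~(a), (b), and~(c) of the corollary are then the respective conclusions of parts~(a), (b), and~(c) of that theorem.

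No genuine obstacle arises at this stage. All of the conceptual work --- verifying that $X_{-1,+}$ is indeed a cone, that the resolvent is positive from $X_{-1}$ into $X$, and that each of the three properties listed in Table~\ref{table:properties} is preserved under the inverse of a positive bounded bijection --- has already been carried out in Proposition~\ref{prop:order:extrapolation} and Theorem~\ref{thm:inherited-properties-of-extrapolation-space}. The corollary merely packages the two results together via the natural choice of bijection between $X_{-1}$ and $X$.
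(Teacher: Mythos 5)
Your proposal is correct and matches the paper's own argument exactly: the paper likewise notes that $\Res(\lambda,A_{-1})\in\calL(X_{-1},X)_+$ is a bijection by Proposition~\ref{prop:order:extrapolation} and then invokes Theorem~\ref{thm:inherited-properties-of-extrapolation-space} with $Z=X_{-1}$. Nothing further is needed.
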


\begin{corollary}
    \label{cor:face-in-extrapolation-space}
        Let $X$ be an ordered Banach space with a generating and normal cone and assume that $X_+$ is a face of $X''_+$.
        If $(T(t))_{t\ge 0}$ is a positive $C_0$-semigroup on $X$, then $X_+$ is a face of $X_{-1,+}$.
\end{corollary}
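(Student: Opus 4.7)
The plan is to apply Theorem~\ref{thm:face-in-extrapolation-space} with $Z := X_{-1}$ and $J : X \hookrightarrow X_{-1}$ the canonical embedding. Three of the required hypotheses are essentially free: $J$ is positive, it has dense range by the very definition of $X_{-1}$ as a completion of $X$ in the weaker norm $\norm{\argument}_{-1}$, and the assumptions on $X$ (normal cone, $X_+$ a face of $X''_+$) are exactly those given in the corollary. So the task reduces to producing a sequence $(R_n)\subseteq\calL(X_{-1},X)_+$ such that $JR_n\to \id_{X_{-1}}$ and $R_nJ\to \id_{X}$ in the weak operator topology.

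The natural candidates are $R_n := n\Res(n,A_{-1})$ for $n>\spb(A)$. By Proposition~\ref{prop:order:extrapolation}(c) these operators are indeed positive and map $X_{-1}$ into $X$ (the latter because $\dom A_{-1}=X$). Since $A_{-1}$ generates the extrapolated $C_0$-semigroup $(T_{-1}(t))_{t\ge0}$ on $X_{-1}$, the standard approximation identity for semigroup generators gives $n\Res(n,A_{-1})z\to z$ in $X_{-1}$ for every $z\in X_{-1}$; this is $JR_nz\to z$ in $\norm{\argument}_{-1}$, and a fortiori in the weak topology of $X_{-1}$. For $x\in X$, one has $R_nJx=n\Res(n,A_{-1})x=n\Res(n,A)x\to x$ in $\norm{\argument}_X$, so $R_nJ\to\id_X$ strongly, hence weakly.

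With all hypotheses of Theorem~\ref{thm:face-in-extrapolation-space} verified, the theorem yields that $J(X_+)$ is a face of $X_{-1,+}$. Since $J$ is injective and bipositive (see Proposition~\ref{prop:order:extrapolation}(b)), we identify $J(X_+)$ with $X_+$, as discussed after the definition of bipositive maps in the introduction, and the conclusion follows.

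I do not expect any serious obstacle: the corollary is essentially a clean instantiation of the abstract framework of Section~\ref{sec:order-properties}, with the resolvents $n\Res(n,A_{-1})$ playing the role of the approximating positive operators $R_n$. The only point requiring a moment of care is that the restriction of $\Res(n,A_{-1})$ to $X$ coincides with $\Res(n,A)$, which is immediate from the fact that $A_{-1}$ extends $A$. Note that the generating assumption on $X_+$ made in the corollary is not explicitly invoked by this argument; it is presumably included to ensure that the setting is the one that arises naturally in applications.
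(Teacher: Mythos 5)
Your proof is correct and follows essentially the same route as the paper: the paper likewise invokes Theorem~\ref{thm:face-in-extrapolation-space} with the canonical (dense-range, bipositive) embedding $J:X\hookrightarrow X_{-1}$ and the approximating resolvents $R_n = n\Res(n,A_{-1})$, citing Proposition~\ref{prop:order:extrapolation} for positivity and bipositivity. Your side remark is also accurate — the generating hypothesis is not used in this argument, since Theorem~\ref{thm:face-in-extrapolation-space} only requires normality of $X_+$.
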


\begin{proof}
    By definition, $X$ is dense in $X_{-1}$ and according to Proposition~\ref{prop:order:extrapolation}(b) the canonical embedding is bipositive. Therefore, defining $T=\Res(\lambda,A_{-1})$ for fixed $\lambda>\spb(A)$ and $R_n:=n\Res(n, A_{-1})$ for sufficiently large $n\in \bbN$, Theorem~\ref{thm:face-in-extrapolation-space} implies that $X_+$ is indeed a face of $X_{-1,+}$.
\end{proof}

The following example shows that Corollary~\ref{cor:face-in-extrapolation-space} fails if $X_+$ is not a face of $X''_+$.

\begin{example}
    \label{exa:not-a-face}
    On the space $X=\{f \in C[0,1]:f(0)=f(1)\}$,
    the operator
    \[
        \dom A :=\{ f \in C^1[0,1] \in X:f'(0)=f'(1)\},\qquad
              f  \mapsto f' 
    \]
    generates a positive periodic shift semigroup \cite[Section~A-I.2.5]{Nagel1986} for which the extrapolation space was shown in \cite[Example~5.3]{BatkaiJacobVoigtWintermayr2018} to be 
    \[
        X_{-1} = \{ g \in \calD(0,1)' : g = f - \partial f \text{ for some }f \in X\};
    \]
    here $\calD(0,1)$ is the space of test functions on $(0,1)$ and $\partial$ is the distributional derivative. The function
    \[
        f(x) = 
                \begin{cases}
                    1 - \frac{e^x}{1+\sqrt{e}} \qquad & \text{ for }x \in [0,\frac12)\\
                    \frac{e^x}{e+\sqrt{e}}     \qquad & \text{ for }x \in [\frac12,1]
                \end{cases}
    \]
    lies in $X$ and satisfies $f-\partial f =\one_{[0,\frac12]}$. In particular,  $\one_{[0,\frac12]} \in X_{-1}$ and  $0 \le \one_{[0,\frac12]} \le \one$. As $X_+$ contains $\one $ but not $\one_{[0,\frac12]}$, we conclude that $X_{+}$ is not a face of $X_{-1,+}$.
\end{example}

We now present an application of Theorem~\ref{thm:dual-estimate-on-cone-only} and follow it up with two examples involving elliptic operators. 
To state the result, let us first recall the definition of Favard spaces with index $\beta \in (-1,0]$. 
Let $(T(t))_{t \ge 0}$ be a $C_0$-semigroup on a Banach space $X$ with generator $A$ and fix  $\omega>\gbd(A)$.
For each $x \in X_{-1}$, one defines 
\begin{align*}
    \norm{x}_\beta 
    := 
    \sup_{t \in (0,\infty)} \frac{1}{t^{\beta+1}} \norm{e^{-\omega t} T_{-1}(t) x - x}_{-1} \in [0,\infty]
    .
\end{align*}
The space $F_\beta := \{x \in X_{-1} : \, \norm{x}_\beta < \infty\}$ is called the \emph{Favard space of order $\beta$} of the semigroup. 
It is a Banach space with respect to the norm $\norm{\argument}_\beta$; 
see, for instance, \cite[Definitions~II.5.10 and~II.5.11]{EngelNagel2000} for details. 

\begin{theorem}
    \label{thm:ultracontractive}
    Let $(T(t))_{t\ge 0}$ be a positive and immediately differentiable $C_0$-semigroup on a reflexive ordered Banach space $X$ whose cone $X_+$ is generating. 
    Let $\varphi \in X'$ be strictly positive.
    Assume that $T(t)' X' \subseteq (X')_\varphi$ for all $t > 0$ and $\dom A' \cap (X')_\varphi$ is majorizing in $(X')_\varphi$.
    \begin{enumerate}[\upshape (a)]
        \item 
        There exists a number $c \ge 0$ such that 
        \[
            \norm{T_{-1}(t)}_{\linSpan X_{-1,+} \to X} 
            \le 
            c \norm{T(t)'}_{X' \to (X')_{\varphi}} \qquad (t>0).
        \]

        \item 
        If $(T(t))_{t\ge 0}$ is analytic and the dual semigroup satisfies an ultracontractivity type estimate $\norm{T(t)'}_{X' \to (X')_{\varphi}} \le \widetilde c t^{-\alpha}$ for some $\widetilde c \ge 0$, $\alpha \in [0,1)$, and all $t \in (0,1]$, then $\linSpan X_{-1,+}$ is contained in the Favard space $F_{-\alpha}$.
    \end{enumerate}
\end{theorem}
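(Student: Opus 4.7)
My plan for~(a) is to invoke Theorem~\ref{thm:dual-estimate-on-cone-only} on the dual side. In the notation of that theorem I will take $X_{\text{thm}} := X'$, $\tilde E := (X')_\varphi$ (an AM-space with unit $\varphi$, whose cone is automatically generating and normal), $E := (X')_1 = \dom A'$ with the cone $\dom A' \cap X'_+$, and $D := \dom A' \cap (X')_\varphi$ endowed with the sum of its two canonical norms; $J$ and $\tilde J$ will be the inclusions of $D$ into $E$ and $\tilde E$ respectively. Bipositivity of $\tilde J$ is immediate, and its majorising range property is exactly the standing hypothesis. The only thing I need to verify is that $T(t)'$ really maps $X'$ into $D$: the assumption gives $T(t)' X' \subseteq (X')_\varphi$, while immediate differentiability of $(T(t))_{t\ge 0}$ combined with the closed graph theorem yields $AT(t) \in \calL(X)$, and a short duality computation then delivers $\duality{T(t)'x'}{Ay} = \duality{(AT(t))'x'}{y}$ for all $y \in \dom A$, giving $T(t)'x' \in \dom A'$ as well. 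Positivity and boundedness of all the involved maps are then routine.

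Next I have to translate the conclusion of Theorem~\ref{thm:dual-estimate-on-cone-only} back to the primal side. Using the duality $(X_{-1})' = (X')_1$ together with reflexivity of $X$, one obtains $E' = X_{-1}$ and $(X_{\text{thm}})' = X$, and a direct pairing computation shows that $(JT(t)')' : X_{-1} \to X$ coincides with $T_{-1}(t)$ (both operators agree on the dense subspace $X$). The essential point is that the dual cone of $E_+ = \dom A' \cap X'_+$ in $E' = X_{-1}$ is exactly $X_{-1,+}$: functionals of that dual cone are non-negative against $X_+ \subseteq X_{-1,+}$ and so lie in the bipolar of $X_{-1,+}$, which equals $X_{-1,+}$ by the Hahn--Banach bipolar theorem; the reverse inclusion follows from continuity of the pairing and positivity of $E_+$ on $X_+$. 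Plugging these identifications into Theorem~\ref{thm:dual-estimate-on-cone-only} yields exactly $\norm{T_{-1}(t)}_{\linSpan X_{-1,+} \to X} \le c \, \norm{T(t)'}_{X' \to (X')_\varphi}$. The main obstacle is precisely this bookkeeping of cones and spaces under duality.

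For~(b) I will combine~(a) with analyticity. Fix $x \in \linSpan X_{-1,+}$. For $s \in (0,1]$, part~(a) together with the ultracontractivity estimate yields $\norm{T_{-1}(s) x}_X \le c\tilde c\, s^{-\alpha} \norm{x}_{\linSpan X_{-1,+}}$. Analyticity provides the standard representation $T_{-1}(t) x - x = \int_0^t A_{-1} T_{-1}(s) x \dx s$ in $X_{-1}$, and since $A_{-1} \in \calL(X, X_{-1})$ the integrand is bounded in $\norm{\argument}_{-1}$ by a constant multiple of $s^{-\alpha} \norm{x}_{\linSpan X_{-1,+}}$. Integration then yields $\norm{T_{-1}(t) x - x}_{-1} = O(t^{1-\alpha}) \norm{x}_{\linSpan X_{-1,+}}$ on $(0,1]$, so that $t^{\alpha - 1} \norm{e^{-\omega t} T_{-1}(t) x - x}_{-1}$ remains bounded as $t \to 0^+$ (the extra factor $e^{-\omega t} - 1$ costs only a benign $O(t)$ correction, since $(T_{-1}(t))$ is bounded near zero). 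On $[1, \infty)$ the same quantity is trivially bounded using $\omega > \gbd(A)$ together with $1 - \alpha > 0$. Hence $\norm{x}_{-\alpha} < \infty$, i.e., $x \in F_{-\alpha}$, which is the claim.
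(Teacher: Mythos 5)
Your argument is correct and follows essentially the same route as the paper: part~(a) applies Theorem~\ref{thm:dual-estimate-on-cone-only} with exactly the same choices $E=\dom A'$, $\tilde E=(X')_\varphi$, $D=E\cap\tilde E$, and then identifies $\dom(A')'$ with $X_{-1}$ (cones included) via reflexivity, which you justify more explicitly than the paper does via the bipolar theorem. For part~(b) the paper simply cites the Engel--Nagel characterisation of Favard spaces of analytic semigroups, whereas you re-derive the relevant direction by integrating $A_{-1}T_{-1}(s)x$; this is the same content and your estimates are sound.
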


For the definition of $(X')_\varphi$, we refer to~\eqref{eq:principal-ideal}. 
We point out that under the assumptions of Theorem~\ref{thm:ultracontractive}, the extrapolation semigroup $(T_{-1}(t))_{t\ge 0}$ is also immediately differentiable and hence maps $X_{-1}$ to $\dom A_{-1}=X$ for all $t>0$. 

\begin{proof}[Proof of Theorem~\ref{thm:ultracontractive}]
    (a) 
    We apply Theorem~\ref{thm:dual-estimate-on-cone-only} to the following situation: 
    choose the space $X$ from Theorem~\ref{thm:dual-estimate-on-cone-only} as the space $X'$ in the present theorem, 
    let $E := \dom A'$ and $\widetilde E := (X')_\varphi$, 
    Note that as the cone in $X$ is generating, the cone in $X'$ is normal \cite[Theorem~2.42]{AliprantisTourky2007} and hence $\widetilde E = (X')_\varphi$ is an ordered Banach space with a normal cone with respect to the gauge norm $\norm{\argument}_\varphi$ \cite[Theorems~2.60 and~2.63]{AliprantisTourky2007}.
    Moreover, the cone in $\widetilde E$ can easily be checked to be generating.
    Finally, choose $D := E \cap \widetilde E$ which is an ordered Banach space with respect to the norm $\norm{\argument}_D := \norm{\argument}_E + \norm{\argument}_{\widetilde E}$. 
    Let $J$ and $\widetilde J$ be the canonical embeddings of $D$ into $E$ and $\widetilde E$ and note that the range of $\widetilde J$ is majorizing in $\widetilde E$ by the assumptions of the present theorem. 
    Fix $t >0$ and let $T:=T(t)'$ -- our assumptions imply that $\Ima T \subseteq D$.
    Hence, Theorem~\ref{thm:dual-estimate-on-cone-only} yields a $c>0$, independent of $t$, such that
    \[
        \norm{(T(t)')'}_{\linSpan({\dom(A')'_+}) \to X''} 
        \le 
        c \; 
        \lVert \widetilde T(t)' \rVert_{X' \to (X')_\varphi}
        .
    \]
    Due to the reflexivity, $X_{-1}$ can be identified with $\dom(A')'$ as an ordered Banach space. Therefore, one can replace the left-hand side of the previous estimate with $\norm{T_{-1}(t)}_{\linSpan X_{-1,+} \to X}$ and doing so yields the claim.
    
    (b) 
    This is a consequence of~(a) and a characterisation \cite[Proposition~II.5.13 and Definition~II.5.11]{EngelNagel2000} of Favard spaces for analytic semigroups.
\end{proof}

\begin{example}[$X_{-1}$ for elliptic operators with Neumann boundary conditions]
    \label{exa:X-1-for-neumann}
    Let $\Omega \subseteq \bbR^d$ be a bounded domain with Lipschitz boundary and let $p \in (1,\infty)$. 
    Let $a \in L^\infty(\Omega; \bbR^{d \times d})$ satisfy the following coercivity condition: 
    there exists a number $\nu > 0$ such that for almost all $x \in \Omega$ the estimate
    \[
        \xi^T a(x) \overline{\xi} \ge \nu \norm{\xi}^2 
        \qquad \text{for all } \xi \in \bbC^d
    \]
    holds.
    Consider the divergence form elliptic operator $A: L^p(\Omega) \supseteq \dom A \to L^p(\Omega)$, $u \mapsto \div(a \nabla u)$ with Neumann boundary conditions. 
    This operator can be constructed using form methods in $L^2(\Omega)$ and then extrapolating to the $L^p$-scale, see \cite[Section~4.1]{Ouhabaz2005}. 
    The operator $A$ generates a positive analytic $C_0$-semigroup $(T(t))_{t \ge 0}$ on $L^p(\Omega)$ \cite[Corollary~4.3 and Theorem~1.52]{Ouhabaz2005}. 
    Let $q \in (1,\infty)$ be the Hölder conjugate of $p$ and suppose $\alpha := \frac{d}{2q} <1$. Then the following hold.
    \begin{enumerate}[(a)]
        \item 
        \emph{The span of the cone $L^p(\Omega)_{-1,+}$ is contained in the Favard space $F_{-\alpha}$}. 
        Indeed, the dual operator $A'$ on the Banach space $L^{q}(\Omega)$ is also a divergence form elliptic operator with Neumann boundary conditions, but with the diffusion coefficient $a^T$. 
        Therefore the semigroup $(T'(t))_{t \ge 0}$ generated by $A'$ maps $L^{q}$ into $L^\infty(\Omega) = \big(L^{q}(\Omega)\big)_{\one}$ and satisfies the ultracontractivity estimate
        \[
            \norm{T(t)'}_{L^{q} \to L^\infty} 
            \le 
            c t^{-\frac{d}{2}\frac{1}{q}} 
            =
            c t^{-\alpha}
        \]
        for some $c > 0$ and all $t \in (0,1]$;
        see \cite[Sections~7.3.2 and~7.3.6]{Arendt2004}.
        Moreover, $\dom A'$ contains the constant function $\one$,
        so $\dom A' \cap L^\infty(\Omega)$ is majorizing in $L^\infty(\Omega)$. 
        Hence, the assumptions of Theorem~\ref{thm:ultracontractive}(b) are satisfied.

        \item 
        \emph{The cone $L^p(\Omega)_{-1,+}$ is the set of all positive finite Borel measures on $\overline{\Omega}$}. 
        Indeed, as $L^p(\Omega)$ is reflexive, the space $L^p(\Omega)_{-1}$ can be identified with $\dom(A')'$ as an ordered Banach space. 
        So we only need to identify the positive linear functionals on $\dom A'$.
        Since $\alpha < 1$ it follows that $\dom A'$ is contained in the space $C(\overline{\Omega})$ \cite[Proposition~3.6]{Nittka2011}. 
        Moreover, $\dom A'$ is majorizing in $C(\overline{\Omega})$ since it contains the constant functions and is dense in $C(\overline{\Omega})$ by \cite[Lemma~4.2]{Nittka2011}. 
        Since the cone in $C(\overline{\Omega})$ has non-empty interior it follows that even the cone $(\dom A')_+$ is dense in the cone $C(\overline{\Omega})_+$, so by Proposition~\ref{prop:span-determined}(b) the dual space $C(\overline{\Omega})'$ embeds bipositively into the space $\dom(A')' \simeq L^p(\Omega)_{-1}$ and by part~(c) of the same proposition, this embedding maps $C(\overline{\Omega})'_+$ surjectively onto the cone of $\dom(A')' \simeq L^p(\Omega)_{-1}$. 
        As the positive cone in $C(\overline{\Omega})'$ consists of all positive finite Borel measures on $\overline{\Omega}$, the claim follows.
    \end{enumerate}
\end{example}

For elliptic operators with Dirichlet boundary conditions, the situation is more subtle: 
in order to apply Theorem~\ref{thm:ultracontractive} one needs the semigroup to be \emph{intrinsically ultracontractive}, meaning that it not only maps $L^1$ into $L^\infty$ but also maps a weighted $L^1$-space -- with the leading eigenfuntion $u$ of the operator as a weight -- into the principal ideal generated by $u$. 
Such results exist in the literature in the case that the domain and the coefficients of the differential operator are sufficiently smooth, see for instance, \cite[Theorem~4.6.2]{Davies1989}. 
However, to get an estimate from $L^p$ into the principal ideal generated by $u$ we also need $p$ to be at least $2$ -- this is why the result in the following example can only be applied if the spatial domain is an interval.

\begin{example}[$X_{-1}$ for elliptic operators with Dirichlet boundary conditions on intervals]
    \label{exa:X-1-for-dirichlet} 
    Let $p \in [2,\infty)$, let $\Omega \subseteq \bbR$ be a bounded open interval, and let $A: L^p(\Omega) \supseteq \dom A \to L^p(\Omega)$ denote the same operator as in Example~\ref{exa:X-1-for-neumann}, but now with Dirichlet boundary conditions. 
    Assume in addition that the coefficient $a$ is $C^1$ on $\overline{\Omega}$.
    Again, it is well-known that $A$ generates a positive and analytic $C_0$-semigroup $(T(t))_{t \ge 0}$ on $L^p(\Omega)$. 
    Let $q \in (1,2]$ be the Hölder conjugate of $p$ and suppose $\alpha := \frac{3}{2q} < 1$ 
    (all of the following arguments would also work on smooth $d$-dimensional domains if the coefficient $a$ is symmetric and  $\alpha := \frac{1}{q} (1+\frac{d}{2}) < 1$; 
    but this inequality in conjunction with $q \le 2$ is possible only if $d = 1$).
    
    \emph{We show that $L^p(\Omega)_{-1,+}$ is contained in the Favard space $F_{-\alpha}$}.
    We begin by noting that since the coefficient matrix $a$ is assumed to be symmetric, the dual operator $T(t)'$ acts as the continuous extension of $T(t)$ to  $L^{q}(\Omega)$. 
    Let $u \in L^p(\Omega)_+$ denote the leading eigenfunction of $A$ and thus also of $A'$ (in particular, $\varphi :=u \in L^{q}(\Omega)_+$) associated to the leading eigenvalue $-\lambda_0 \in (-\infty,0)$.
    We show that the assumptions of Theorem~\ref{thm:ultracontractive}(a) are satisfied.
    
    The smoothness assumption on the coefficient $a$ implies that the heat kernel $k_t: \Omega \times \Omega \to [0,\infty)$ of the semigroup operator $T(t)$ (and thus also of $T(t)'$) satisfies the intrinsic ultracontractivity estimate
    \begin{equation}
        \label{eq:exa:X-1-for-dirichlet-kernel-estimate}
        k_t(x,y) \le c t^{-\frac{3}{2}} u(x) u(y)
        = 
        c t^{-\alpha q} u(x) u(y)
    \end{equation}
    for a constant $c > 0$, all $t > 0$, and all $x,y \in \Omega$ \cite[Theorem~4.6.2]{Davies1989}. 
    Hence, $T(t)'$ maps each $L^{q}(\Omega)$ into the principal ideal $(L^{q}(\Omega))_u$. 
    Moreover, since $u \in \dom A'$, the space $\dom A' \cap (L^{q}(\Omega))_u$ is clearly majorizing in $(L^{q}(\Omega))_u$.
    So it only remains to show that $\norm{T(t)'}_{L^{q} \to (L^{q})_u} \le \tilde c t^{-\alpha}$ for a constant $\tilde c > 0$ and all $t \in (0,1]$.

    To this end,  we first observe that the semigroup $(T(t))_{t\geq 0}$ is dominated by the semigroup generated by the same differential operator but with Neumann boundary conditions; this follows for instance from \cite[Corollary~2.22]{Ouhabaz2005}. 
    In addition to the principal ideals $(L^p(\Omega))_u$ and $(L^{q}(\Omega))_u$ we will now use the weighted $L^1$-space $L^1(\Omega, u \dxMedium x)$. 
    This space coincides with the norm completion of $L^p(\Omega)$ with respect to the norm $\norm{f}_{L^1(\dxShort x)} := \int_\Omega \modulus{f} u \dx x$ and behaves, in a sense, dually to the principal ideal $(L^{q}(\Omega))_u$; we refer to \cite[Section~2]{AroraGlueck2022a} or \cite[Section~2]{DanersGlueck2018} for a detailed discussion. 
    
    Moreover, since $T(t)'u = e^{-t\lambda_0} u$ for each $t$, the operator $T(t)$ extends to a bounded linear operator on $L^1(\Omega, u \dxMedium x)$, 
    again denoted by $T(t)$, which satisfies
    \[
        \norm{T(t)}_{L^1(u\dxMedium x) \to L^1(u\dxMedium x)} 
        \le 
        e^{-t\lambda_0}
        \le 
        1
        .
    \]
    At the same time it follows from the heat kernel estimate~\eqref{eq:exa:X-1-for-dirichlet-kernel-estimate} that, for each $t > 0$, the operator $T(t)$ maps $L^1(\Omega, u\dxMedium x)$ into the principal ideal $(L^p(\Omega))_u$ with norm 
    \[
        \norm{T(t)}_{L^1(u\dxMedium x) \to (L^p)_u} 
        \le ct^{-\alpha q}
    \]
    (an abstract version of this argument can be found in \cite[Proposition~2.2]{AroraGlueck2022a}).
    We now combine the previous two norm estimates with an interpolation inequality: 
    for every function $f \in (L^p(\Omega))_u \subseteq L^p(\Omega) \subseteq L^1(\Omega, u \dxMedium x)$, we have
    \[
        \norm{f}_{L^p}^p 
        = 
        \int_\Omega u^{p-2} \; \modulus{f} u \; \frac{\modulus{f}^{p-1}}{u^{p-1}} \dx x
        \le 
        \norm{u^{p-2}}_\infty \norm{f}_{L^1(u\dxMedium x)} \norm{f}_{(L^p)_u}^{p-1},
    \]
    so $\norm{f}_{L^p} \le \norm{u^{p-2}}_\infty^{1/p} \norm{f}_{L^1(u\dx x)}^{1/p} \norm{f}_{(L^p)_u}^{1/q}$;
    here we have also used that $u \in L^\infty(\Omega)$ and $p \ge 2$, which ensures $u^{p-2} \in L^\infty(\Omega)$.
    This estimate for $\norm{f}_{L^p}$ together with the two aforementioned norm estimates for $T(t)$ readily yield
    \[
        \norm{T(t)}_{L^1(u\dxMedium x) \to L^p} 
        \le 
         c^{1/q}\norm{u^{p-2}}_\infty^{1/p} t^{-\alpha}
    \]
    for all $t > 0$. 
    For every $t > 0$, one has $\norm{T(t)}_{L^1(u\dxMedium x) \to L^p} = \norm{T(t)'}_{L^{q} \to (L^{q})_u}$.
    Indeed, the inequality $\le$ is shown in an abstract setting in \cite[Proposition~2.1]{DanersGlueck2018}, 
    but one can show that even equality is true by using the duality result in \cite[Exercise~IV.9(a)]{Schaefer1974}.
    So the assumption of Theorem~\ref{thm:ultracontractive}(b) is satisfied, which gives the claimed result.
\end{example}

For Dirichlet boundary conditions, not all elements of $L^p(\Omega)_{-1,+}$ can be described as measures on $\overline{\Omega}$. This is in contrast to the situation for Neumann boundary conditions and small $p$, see Example~\ref{exa:X-1-for-neumann}(b).
One can see this by taking $A$ to be the Dirichlet Laplacian on the interval $(-1,1)$. 
Then $A'$ is also the Dirichlet Laplacian and the domain $\dom A'$ is (no matter which value we use for $p$) a majorizing and dense subset of $C^1[-1,1]$.
Hence, $f\mapsto \mp f'(\pm1)$ are positive elements of $\dom(A')' \simeq L^p((-1,1))_{-1}$ that are not given by measures on $[-1,1]$.

\section{Admissibility of observation operators}
    \label{sec:admissibility-observation}

We now come to the main topic of the paper -- admissibility.
In this section, we are interested in the system
\begin{equation*}
    \label{eq:system-observation}
    \Sigma(A,C)\quad\left\{
        \begin{aligned}
            \dot{x}(t) &= Ax(t),\quad &t\ge 0\\
            y(t)       &= Cx(t),       \quad &t\ge 0\\
            x(0)       &= x_0
        \end{aligned}
    \right.;
\end{equation*}
where $A$ generates a $C_0$-semigroup $(T(t))_{t\ge 0}$ on a Banach space $X$ and the \emph{observation} operator $C\in\calL(X_1,Y)$ takes values in a Banach space $Y$. 
We say that $C$ is a \emph{$\mathrm L^1$-admissible observation operator} if the \emph{output map} -- defined as
\begin{equation}
    \label{eq:output-operator}
        \Psi_{\tau}:X_1 \to     L^1([0,\tau],Y),\qquad
                    x   \mapsto   CT(\argument)x
\end{equation}
has a bounded extension to $X$ for some (equivalently, all) $\tau>0$. This is of course, equivalent to the existence of $K_{\tau}>0$ such that 
    \begin{equation}
        \label{eq:observation-equivalent}
        \norm{CT(\argument)x}_{L^1([0,\tau],Y)} \le K_{\tau}\norm{x}\qquad (x\in X_1).
    \end{equation}
In addition, if we have $\limsup_{\tau\downarrow 0} \norm{\Psi_{\tau}}_{\calL(X,\mathrm{L^1}([0,\tau],Y))}=0$, then $C$ is called a \emph{zero-class $\mathrm{L^1}$-admissible observation operator}. 
Most of our results in this section are proved in the following setting:
\begin{assumption}
    \label{ass:general}
    Suppose that $X$ is an ordered Banach space with a generating and normal cone and $(T(t))_{t\ge 0}$ is a positive $C_0$-semigroup on $X$. Moreover, $Y$ is a Banach space and
    $C\in \calL(X_1,Y)$.
\end{assumption}

In all of our results, the order on $X_1$ is assumed to be the order inherited from $X$. 
We begin with obtaining sufficient conditions for an operator $C \in \calL(X_1,Y)$ to be $L^1$-admissible. Thereafter, in Section~\ref{sec:zero-class-output}, we look at situations where $L^1$-admissibility of $C$ automatically implies zero-class $L^1$-admissibility.

\subsection{$L^1$-admissibility}

For positive semigroups on Banach lattices, it is known from \cite[Theorem~4.1.17]{Wintermayr2019} that positive observation operators mapping into AL-spaces (cf. Table~\ref{table:properties}) are always $L^1$-admissible. In our first result, we generalise this to ordered Banach spaces whose norm is additive on the positive cone. Consequently, we are able to show that every positive finite-rank observation operator is $L^1$-admissible (Corollary~\ref{cor:admissibility-observation-finite-rank}).

\begin{proposition}
    \label{prop:admissibility-observation-additive-norm}
    Suppose that Assumption~\ref{ass:general} holds. 
    Assume further that $Y$ is an ordered Banach space whose norm is additive on the positive cone and that $C\in \calL(X_1,Y)$ is positive.
    Then $C$ is $L^1$-admissible.
\end{proposition}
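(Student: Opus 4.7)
My plan is to first establish the target inequality on the positive cone $X_1\cap X_+$ by combining the additivity of $\norm{\argument}$ on $Y_+$ with the Bochner integral, and then to extend to all of $X_1$ via a resolvent approximation together with Fatou's lemma. The central technical subtlety -- and the main obstacle -- is that the required bound must be in $\norm{x}_X$, not $\norm{x}_{X_1}$, whereas the natural estimates through $C\in\calL(X_1,Y)$ produce the latter; both steps have to be arranged to avoid the stronger norm.

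For the first step, I would fix $x\in X_1\cap X_+$. Strong continuity of $(T(t))_{t\ge0}$ on $X_1$ combined with $C\in\calL(X_1,Y)$ and positivity of both $C$ and the semigroup makes $t\mapsto CT(t)x$ continuous from $[0,\tau]$ into the closed cone $Y_+$. Additivity of $\norm{\argument}$ on $Y_+$ together with positive homogeneity gives, for each partition $0=t_0<\dots<t_n=\tau$,
\[
    \sum_{i=0}^{n-1} \norm{CT(t_i)x}_Y (t_{i+1}-t_i) = \Bigl\| \sum_{i=0}^{n-1} CT(t_i)x\,(t_{i+1}-t_i) \Bigr\|_Y,
\]
and passing to the Bochner/Riemann limit (and pulling $C$ out of the integral) yields $\int_0^\tau\norm{CT(t)x}_Y\dx t=\bigl\|C\int_0^\tau T(t)x\dx t\bigr\|_Y$. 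To avoid a spurious $\norm{x}_{X_1}$, I would not bound $\norm{\int_0^\tau T(t)x\dx t}_{X_1}$ crudely; instead, the standard identity $A\int_0^\tau T(t)x\dx t = T(\tau)x-x$ (valid for every $x\in X$) already controls this $X_1$-norm by $(M_\tau\tau+M_\tau+1)\norm{x}_X$, where $M_\tau:=\sup_{t\in[0,\tau]}\norm{T(t)}$. Combined with boundedness of $C$, this produces the desired estimate $\int_0^\tau\norm{CT(t)x}_Y\dx t\le K_\tau\norm{x}_X$ on $X_1\cap X_+$.

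For the extension to arbitrary $x\in X_1$, the tempting step -- decomposing $x=x_+-x_-$ with $x_\pm\in X_+$ -- fails because $x_\pm$ need not lie in $X_1$, so $CT(t)x_\pm$ is not defined. To circumvent this, I would first use that $X_+$ is closed and generating, which by the open mapping theorem applied to the decomposition norm $\norm{\argument}_{X_+-X_+}$ of~\eqref{eq:norm-on-span-of-cone} yields a constant $c$ with the property that every $x\in X$ admits such a decomposition with $\norm{x_\pm}_X\le c\norm{x}_X$. Setting $R_n:=n\Res(n,A)$ for $n>\spb(A)$, Proposition~\ref{prop:order:extrapolation}(c) (specialised to $X$) together with the standard uniform bound $\sup_n\norm{R_n}<\infty$ gives $R_n\in\calL(X)_+$, so $R_nx_\pm\in X_1\cap X_+$ with norms uniformly bounded by a multiple of $\norm{x}_X$. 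Applying Step~1 to $R_nx_+$ and $R_nx_-$ separately and using $R_nx=R_nx_+-R_nx_-$ delivers
\[
    \int_0^\tau \norm{CT(t)R_nx}_Y \dx t \le K'_\tau \norm{x}_X
\]
uniformly in $n$, for a constant $K'_\tau$ independent of $x$.

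The final step is to pass to the limit $n\to\infty$. Since $x\in X_1$, the standard resolvent approximation gives $R_nx\to x$ in $X_1$, whence $CT(t)R_nx\to CT(t)x$ in $Y$ for every $t\in[0,\tau]$ by boundedness of $T(t)$ on $X_1$ and of $C\colon X_1\to Y$. Consequently $\norm{CT(t)R_nx}_Y\to\norm{CT(t)x}_Y$ pointwise in $t$, and Fatou's lemma yields $\int_0^\tau\norm{CT(t)x}_Y\dx t\le K'_\tau\norm{x}_X$, concluding the proof of $L^1$-admissibility. The two places where $\norm{x}_X$ rather than $\norm{x}_{X_1}$ is forced through -- namely the identity for $A\int T(t)x\dx t$ in Step~1 and the resolvent approximation together with Fatou in Step~2 -- are precisely what make the argument non-trivial.
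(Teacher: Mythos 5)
Your proof is correct and follows essentially the same route as the paper: first the estimate on $X_1\cap X_+$ via additivity of the norm on $Y_+$ and a Riemann-sum/Bochner argument, then the extension to all of $X_1$ by the uniform decomposition $x=x_+-x_-$ in $X$, regularisation with $R_n=n\Res(n,A)$, and Fatou's lemma --- the paper packages this second step as Lemma~\ref{lem:approximation-argument}. The only cosmetic difference is that the paper rescales so that $\gbd(A)<0$ and bounds $\int_0^\infty\norm{CT(t)x}\dx t$ by $\norm{CA^{-1}}\,\norm{x}$, whereas you keep the finite horizon and use the identity $A\int_0^\tau T(t)x\dx t=T(\tau)x-x$; both correctly produce a bound in $\norm{x}_X$ rather than $\norm{x}_{X_1}$.
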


Examples of ordered Banach space whose norm is additive on the positive cone but are not AL-spaces are non-commutative $L^1$-spaces, pre-duals of non-commutative von Neumann algebras, or ice-cream cone in finite-dimensions. Some more examples are given in \cite[Appendix~A]{GlueckWolff2019}.

For the proof of Proposition~\ref{prop:admissibility-observation-additive-norm}, we essentially adapt the argument of \cite[Theorem~4.1.17]{Wintermayr2019} to our setting. The essential ingredient of the argument is that due to the positivity of the semigroup and the observation operator, it suffices to establish~\eqref{eq:observation-equivalent} for positive $x\in X_1$. When $X$ and $Y$ are both Banach lattices, this was proved in \cite[Lemma~3.1]{Gantouh2022a}. However, the result remains true under our assumption as well and is actually a consequence of the following technical lemma.

\begin{lemma}
    \label{lem:approximation-argument}
    Let $X, \widetilde X$ be ordered Banach spaces, let the cone $X_+$ be generating in $X$, and let $Y$ be a Banach space. 
    Let $(F(t))_{t \in [0,1]}$ be a family of operators in $\calL(\widetilde X,Y)$ with strongly measurable orbits.  
    Let $J \in \calL(\widetilde X, X)_+$ and assume the following:
    \begin{enumerate}[\upshape (a)]
        \item 
        There exists a sequence $(R_n) \subseteq \calL(X, \widetilde X)_+$ such that $(J R_n)$ and $(R_n J)$ converge strongly to the identity operators on $X$ and $\widetilde X$, respectively.
        
        \item 
        There exists $\widetilde M \ge 0$ such that 
        $
            \int_0^{1} \norm{F(t)x} \dx t \le \widetilde M \norm{Jx}
        $
        for each $x\in \widetilde{X}_+$.
    \end{enumerate}
    Then there exists $M \ge 0$ such that
    $
        \int_0^{1} \norm{F(t)x} \dx t \le  M \norm{Jx}
    $
    even for all $x\in \widetilde X$.
\end{lemma}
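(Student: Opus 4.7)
The strategy is to reduce the desired estimate for arbitrary $x \in \widetilde X$ to hypothesis (b) on $\widetilde X_+$ by approximating $x$ by $R_n J x$ and decomposing $J x$ as a difference of positive elements of $X$. Two preliminary observations: first, Banach--Steinhaus applied to the strongly convergent sequence $(JR_n) \subseteq \calL(X)$ yields $K := \sup_n \norm{J R_n} < \infty$; second, since $X_+$ is closed and generating, the norm $\norm{\argument}_{X_+-X_+}$ of~\eqref{eq:norm-on-span-of-cone} is a complete norm on $X$ stronger than $\norm{\argument}_X$, hence equivalent to it by the open mapping theorem. Consequently there exists $C \ge 1$ such that every $y \in X$ decomposes as $y = y_1 - y_2$ with $y_1, y_2 \in X_+$ and $\norm{y_1} + \norm{y_2} \le C \norm{y}$.

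Fix $x \in \widetilde X$ and write $J x = y_1 - y_2$ via such a bounded decomposition. Since the operators $R_n$ are positive, $R_n y_1, R_n y_2 \in \widetilde X_+$, so hypothesis (b) applies to each term and gives
\begin{align*}
    \int_0^{1} \norm{F(t) R_n y_i} \dx t
    \le \widetilde M \norm{J R_n y_i}
    \le \widetilde M K \norm{y_i} \qquad (i=1,2).
\end{align*}
Using $R_n y_1 - R_n y_2 = R_n J x$ and summing over $i$ yields
\begin{align*}
    \int_0^{1} \norm{F(t) R_n J x} \dx t
    \le \widetilde M K\bigl(\norm{y_1}+\norm{y_2}\bigr)
    \le \widetilde M K C \norm{J x}.
\end{align*}

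To finish, hypothesis (a) gives $R_n J x \to x$ in $\widetilde X$, so continuity of each $F(t)$ implies $F(t) R_n J x \to F(t) x$ in $Y$ for every $t \in [0,1]$. Strong measurability of the orbits of $F$ ensures that the integrands are measurable, so Fatou's lemma gives
\begin{align*}
    \int_0^1 \norm{F(t) x} \dx t
    \le \liminf_{n\to\infty} \int_0^1 \norm{F(t) R_n J x} \dx t
    \le \widetilde M K C \norm{J x},
\end{align*}
and $M := \widetilde M K C$ does the job. No step is genuinely hard in isolation; the main conceptual point is that positivity of the $R_n$ is precisely what lets the Krein-type decomposition of $J x$ in $X$ be pulled back to a decomposition in $\widetilde X_+$ where hypothesis (b) is available.
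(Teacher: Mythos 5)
Your proof is correct and follows essentially the same route as the paper's: a uniform (Krein-type) decomposition of $Jx$ into positive parts, positivity of $R_n$ to pull this back into $\widetilde X_+$ where hypothesis (b) applies, and Fatou's lemma via $R_nJx \to x$. The only cosmetic difference is that you invoke Banach--Steinhaus to bound $\sup_n\norm{JR_n}$, whereas the paper simply passes to the limit $\norm{JR_ny}\to\norm{y}$ inside the Fatou estimate, yielding the constant $2c\widetilde M$ instead of your $\widetilde M K C$.
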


\begin{proof}
    Let $x \in \widetilde X$. 
    By the uniform decomposition property of ordered Banach spaces with generating cones \cite[Theorem~2.37]{AliprantisTourky2007}, there exists $c>0$ -- independent of $x$ -- and $y,z\in X_+$ such that $Jx=y-z$ and $\norm{y},\norm{z}\le c\norm{Jx}$.
    Due to~(b),
    \[
        \int_0^1 \norm{F(t) R_n Jx} \dx t 
        \le 
        \widetilde M \left(\norm{JR_n y}+\norm{JR_n z}\right)
    \]
    for all $n \in \bbN$.
    Fatou's lemma and the convergence properties in~(a) thus give
    \[
        \int_0^1 \norm{F(t) x} \dx t 
        \le 
        \widetilde M\big( \norm{y} + \norm{z} \big) 
        \le 
        2c \widetilde M \norm{Jx}
        ,
    \]
    as desired.
\end{proof}

\begin{proof}[Proof of Proposition~\ref{prop:admissibility-observation-additive-norm}]
    Let $C\in \calL(X_1,Y)$ be positive and assume without loss of generality that $\gbd(A)<0$. 
    Then for every positive $x \in X_1$, the integral $\int_0^1 \norm{CT(t)x} \dx t$ can be estimated from above by
    \[  
        \int_0^{\infty} \norm{CT(t)x} \dx t
        = \norm{\int_0^{\infty} CT(t)x \dx t}
        = \norm{CA^{-1}x}
        \le \norm{CA^{-1}} \norm{x};
    \]
    here the first equality holds because the norm on the positive cone of $Y$ is additive.
    Now we can apply Lemma~\ref{lem:approximation-argument} to the space $\tilde X := X_1$, 
    the canonical embedding $J \in \calL(X_1, X)_+$, the resolvent operators $R_n := n\Res(n,A) \in \calL(X,X_1)_+$, 
    and the operators $F(t) := CT(t) \in \calL(X_1,Y)$.
\end{proof}

The following sufficient criterion for $L^1$-admissibility of observation operators follows directly from Proposition~\ref{prop:admissibility-observation-additive-norm}.

\begin{theorem}
    \label{thm:admissibility-observation-factorisation}
    Suppose that Assumption~\ref{ass:general} holds.
    Assume further that there exists an ordered Banach space $\widetilde X$ whose norm is additive on the cone $\widetilde{X}_+$ such that $C$ factorises as
    \[
            C: X_1 \overset{C_1}{\longrightarrow} \widetilde X \overset{C_2}{\longrightarrow} Y
    \]
    for a positive operator $C_1 \in \calL(X_1,\widetilde X)$ and an operator $C_2 \in \calL(\widetilde X, Y)$.
    Then $C$ is $L^{1}$-admissible. 
\end{theorem}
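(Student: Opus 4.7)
The plan is to reduce the factorised setting directly to the already-proved case of Proposition~\ref{prop:admissibility-observation-additive-norm}, treating $C_1$ itself as the observation operator and then pushing through $C_2$ by boundedness.

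More concretely, I would first observe that the hypotheses of Proposition~\ref{prop:admissibility-observation-additive-norm} are satisfied with $Y$ replaced by $\widetilde{X}$ and $C$ replaced by $C_1$: the space $X$ is an ordered Banach space with generating and normal cone carrying the positive $C_0$-semigroup $(T(t))_{t\ge 0}$ (that is Assumption~\ref{ass:general}), the target $\widetilde{X}$ is an ordered Banach space whose norm is additive on $\widetilde{X}_+$ (by the hypothesis on $\widetilde{X}$), and $C_1 \in \calL(X_1,\widetilde{X})$ is positive (also by hypothesis). Consequently, the proposition yields that $C_1$ is $L^1$-admissible, i.e.\ for each $\tau>0$ there is $K_\tau \ge 0$ such that
\begin{equation*}
    \int_0^\tau \norm{C_1 T(t) x}_{\widetilde{X}} \dx t \le K_\tau \norm{x}_X \qquad (x \in X_1).
\end{equation*}

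Next, since $C = C_2 C_1$ and $C_2 \in \calL(\widetilde{X},Y)$ is bounded, for every $x \in X_1$ and every $\tau > 0$ we can estimate pointwise $\norm{CT(t)x}_Y = \norm{C_2 C_1 T(t) x}_Y \le \norm{C_2} \, \norm{C_1 T(t) x}_{\widetilde{X}}$, and integrating over $[0,\tau]$ and using the $L^1$-admissibility of $C_1$ gives
\begin{equation*}
    \int_0^\tau \norm{CT(t)x}_Y \dx t \le \norm{C_2} K_\tau \norm{x}_X,
\end{equation*}
which is condition~\eqref{eq:observation-equivalent} for $C$. Hence $C$ is $L^1$-admissible.

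There is essentially no obstacle: the only subtlety is verifying that Proposition~\ref{prop:admissibility-observation-additive-norm} is stated with enough generality that its target space can be any ordered Banach space with additive norm on the positive cone (not only an AL-space or a Banach lattice), which is indeed the case as written. Once that is noted, the theorem is a two-line consequence by composition with the bounded operator $C_2$.
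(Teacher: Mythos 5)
Your proof is correct and follows exactly the same route as the paper: apply Proposition~\ref{prop:admissibility-observation-additive-norm} to the positive operator $C_1$ with target $\widetilde X$, then compose with the bounded operator $C_2$ to transfer the $L^1$-estimate to $C$. No gaps.
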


\begin{proof}
    By Proposition~\ref{prop:admissibility-observation-additive-norm}, the operator
    $C_1$ is $L^{1}$-admissible. Thus, for $\tau > 0$ there exists $K_{\tau}>0$ such that
    \[
        \norm{C_2 C_1 T(t)x}_{L^1([0,\tau],Y)} \le  K_{\tau}\norm{C_2}\norm{x}
    \]
    for all $x\in X_1$, As a result, $C=C_2 C_1$ is also $L^{1}$-admissible.
\end{proof}

The factorisation condition in the preceding theorem might seem artificial at first glance. 
Yet, the next two results show how the condition arises naturally in some situations.

\begin{corollary}
    \label{cor:admissibility-observation-finite-rank}
    Suppose that Assumption~\ref{ass:general} holds. If $Y$ is an ordered Banach space and $C\in \calL(X_1,Y)$ is a positive finite-rank operator, then $C$ is $L^1$-admissible.
\end{corollary}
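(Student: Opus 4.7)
The plan is to invoke Theorem~\ref{thm:admissibility-observation-factorisation} by factoring $C$ through a finite-dimensional ordered Banach space equipped with an equivalent norm that is additive on its positive cone. Since $C$ has finite rank, $Z := \Ima C$ is a finite-dimensional subspace of $Y$; endowed with the cone $Z_+ := Z \cap Y_+$, it becomes an ordered Banach space whose cone is closed and salient (being the intersection of $Z$ with $Y_+$), and the positivity of $C$ gives $C(X_{1,+}) \subseteq Z_+$.

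The main work lies in re-norming $Z$. I would first pick a linear functional $\varphi$ on $\linSpan Z_+$ that is strictly positive on $Z_+ \setminus \{0\}$; such a $\varphi$ exists because $\linSpan Z_+$ is finite-dimensional and $Z_+$ is a closed salient cone in it, so the dual cone has nonempty interior. The formula
\[
    |v|_{Z_+} := \inf\{\varphi(a) + \varphi(b) : v = a-b,\ a,b \in Z_+\}
\]
then defines a norm on $\linSpan Z_+$ whose restriction to $Z_+$ coincides with $\varphi$ and is therefore additive on $Z_+$. Extending this norm to all of $Z$ -- for instance by choosing an algebraic complement of $\linSpan Z_+$ in $Z$, equipping it with any norm, and taking the sum -- produces a norm on $Z$ that remains additive on $Z_+$; by finite-dimensionality it is equivalent to the norm inherited from $Y$.

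Call the resulting ordered Banach space $\tilde X$. Then $C$ admits the factorisation $X_1 \xrightarrow{C_1} \tilde X \xrightarrow{C_2} Y$, where $C_1$ is the codomain-corestriction of $C$ (positive by construction and bounded since all norms on $\tilde X$ are equivalent) and $C_2$ is the canonical inclusion (bounded for the same reason). Theorem~\ref{thm:admissibility-observation-factorisation} then immediately yields the $L^1$-admissibility of $C$.

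The only non-automatic ingredient is the construction of the additive equivalent norm; while this is a standard finite-dimensional fact, one must verify both the existence of a strictly positive functional on $Z_+$ and that the Minkowski-type construction produces a genuine norm on $\linSpan Z_+$ that truly behaves additively on the cone. I would expect this to be the sole step requiring any care; everything else reduces to assembling pieces that are already in place.
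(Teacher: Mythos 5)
Your proof is correct and follows essentially the same route as the paper: factor $C$ through its finite-dimensional range equipped with an equivalent norm additive on the cone, then apply Theorem~\ref{thm:admissibility-observation-factorisation}. The only difference is that you construct the additive norm explicitly via a strictly positive functional (a construction which does check out), whereas the paper simply cites \cite[Corollary~3.8]{AliprantisTourky2007} and \cite[Theorem~VII.1.1]{Wulich2017} for this finite-dimensional fact.
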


\begin{proof}
    It suffices to find an ordered Banach space $\widetilde X$  such that the assumptions of Theorem~\ref{thm:admissibility-observation-factorisation} hold.
    Define $\widetilde X \subseteq Y$ to be the range $\Ima C$ endowed with the cone $\widetilde{X}_+ := \widetilde X \cap Y_+$. 
    Then $\widetilde X$ is a finite-dimensional ordered Banach space and due to finite-dimensionality there exists an equivalent norm on this space that is additive on the positive cone; see \cite[Corollary~3.8]{AliprantisTourky2007} and \cite[Theorem~VII.1.1]{Wulich2017}. 
    Setting $C_1 := C: X_1\to \widetilde X$ and $C_2:\widetilde X\to Y$ to be the canonical embedding, Theorem~\ref{thm:admissibility-observation-factorisation} can be applied.
\end{proof}

We point out that in Corollary~\ref{cor:admissibility-observation-finite-rank}, although $\Ima C$ is a finite-dimensional ordered (Banach) space, it need not be a lattice. So, even though they're exists an equivalent norm on $\Ima C$ that is additive on the positive cone, there may not be an equivalent norm on $\Ima C$ that turns it into a lattice. For this reason, we needed the generalisation of \cite[Theorem~4.1.17]{Wintermayr2019} given in Proposition~\ref{prop:admissibility-observation-additive-norm}.

\begin{remark}
    In general, a non-positive finite-rank observation operator need not be $L^1$-admissible. Indeed, for the left translation semigroup on $X:=C_0[0,1)$, the observation operator $C:X_1\to \bbC$ given by $f\mapsto f'(0)$ is not $L^1$-admissible. 
    
    In fact, even analyticity of the semigroup is not sufficient to guarantee $L^1$-admissibility of a finite-rank operator \cite[Theorem~10]{JacobSchwenningerZwart2019}. 
\end{remark}

The dual of the factorization assumption can be characterised in terms of order boundedness (Appendix~\ref{sec:factorisation}). This lets us
reformulate Theorem~\ref{thm:admissibility-observation-factorisation} for reflexive $Y$:

\begin{corollary}
    \label{cor:admissibility-observation-order-bounded}
    Suppose that Assumption~\ref{ass:general} holds and assume that $Y$ is reflexive.    
    If the dual operator $C': Y'\to (X_1)'=(X')_{-1}$ maps the unit ball of $Y'$ into an order bounded subset of $(X')_{-1}$, then $C$ is $L^{1}$-admissible. 
\end{corollary}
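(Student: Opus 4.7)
My plan is to apply Theorem~\ref{thm:admissibility-observation-factorisation} directly, by using the order-boundedness hypothesis to construct an intermediate ordered Banach space $\tilde X$ whose norm is additive on the cone and through which $C$ factors positively. Reflexivity of $Y$ seems to be needed only in order to quote the equivalence from Appendix~\ref{sec:factorisation}; the implication we need here should be provable without it.

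First, by hypothesis there exists $e \in (X')_{-1,+}$ with $-e \le C'y' \le e$ in $(X_1)' = (X')_{-1}$ for every $y' \in B_{Y'}$. Since $(X')_{-1,+}$ is by definition the norm-closure of $X'_+$, any element of $(X')_{-1,+}$, when viewed as a bounded functional on $X_1$, acts non-negatively on $X_{1,+} := X_1 \cap X_+$ (by approximating in $(X_1)'$-norm and using continuous evaluation at $x \in X_1$). Applying this to $e \pm C'y'$ gives $|y'(Cx)| = |(C'y')(x)| \le e(x)$ for all $x \in X_{1,+}$ and $y' \in B_{Y'}$, and taking the supremum over $y'$ yields the key pointwise estimate $\norm{Cx}_Y \le e(x)$ on $X_{1,+}$.

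Next, I would verify that $X_{1,+}$ is generating in $X_1$: for $\lambda > \max\{\spb(A),0\}$ the resolvent $\Res(\lambda,A)$ is positive by Proposition~\ref{prop:order:extrapolation}(c), so writing $x = \Res(\lambda,A)w$ with $w \in X$ and decomposing $w = w_+ - w_-$ via the uniform decomposition property of $X_+$ produces a decomposition $x = \Res(\lambda,A)w_+ - \Res(\lambda,A)w_-$ inside $X_{1,+} - X_{1,+}$ with norm control by $\norm{x}_{X_1}$. I would then equip $X_1$ with the Minkowski-type seminorm
\[
    \norm{x}_e := \inf\{e(y) + e(z) : x = y-z, \; y,z \in X_{1,+}\},
\]
which is finite, dominated by a constant multiple of $\norm{\argument}_{X_1}$, and coincides with $e(\argument)$ on $X_{1,+}$; in particular it is additive on that cone. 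Let $\tilde X$ denote the Banach-space completion of $X_1/\ker \norm{\argument}_e$ with respect to $\norm{\argument}_e$, equipped with the closure of the image of $X_{1,+}$ as positive cone. Additivity of the norm on the cone passes to $\tilde X$ by continuity.

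Finally, the canonical map $C_1: X_1 \to \tilde X$ is positive and bounded by construction, while extending the pointwise estimate via $\norm{Cx}_Y \le \norm{Cy}_Y + \norm{Cz}_Y \le e(y) + e(z)$ for any decomposition $x = y - z$ in $X_{1,+} - X_{1,+}$ gives $\norm{Cx}_Y \le \norm{x}_e$ on all of $X_1$. Hence $C$ factors as $C = C_2 \circ C_1$ for a bounded $C_2 : \tilde X \to Y$ obtained by continuous extension, and Theorem~\ref{thm:admissibility-observation-factorisation} then yields the claimed $L^1$-admissibility. The subtle step is the interplay between the two natural ordered structures on $(X_1)' = (X')_{-1}$: checking that elements of $(X')_{-1,+}$ really do evaluate non-negatively on $X_{1,+}$. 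Once that is secured, the remainder is a routine Minkowski-gauge construction that transforms the dual order bound into the desired positive factorisation through a norm-additive intermediate.
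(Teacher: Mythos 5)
Your argument is correct, but it is genuinely different from the one in the paper. The paper stays on the dual side: it invokes the normality of the cone of $(X')_{-1}$ (Corollary~\ref{cor:inherited-properties-of-extrapolation-space}(c)) to apply Proposition~\ref{prop:unit-ball-into-order-bounded} and factorise $C'$ through an ordered Banach space $\widetilde X$ \emph{with a unit}, then dualises that factorisation and uses the reflexivity of $Y$ to pass from $Y''$ back to $Y$; the additive-norm intermediate space is $(\widetilde X)'$, whose norm is additive on the cone by Ng's lemma. You instead work on the primal side: you extract a single dominating functional $e\in\big((X_1)'\big)_+$ with $\norm{Cx}_Y\le \langle e, x\rangle$ on $X_{1,+}$ (your verification that elements of $(X')_{-1,+}$ act non-negatively on $X_1\cap X_+$ is exactly the right compatibility check for the identification $(X_1)'=(X')_{-1}$), and you build the additive-norm space by hand as the completion of $X_1$ under the Minkowski gauge $\norm{x}_e=\inf\{e(y)+e(z): x=y-z,\ y,z\in X_{1,+}\}$ --- essentially the abstract $L^1$-space generated by $e$, which is the primal shadow of the paper's principal ideal with unit on the dual side. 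Your route buys something real: reflexivity of $Y$ is indeed not used anywhere in your argument, so you prove a slightly stronger statement, and you bypass Appendix~\ref{sec:factorisation} entirely; the paper's route is shorter given that machinery and reuses it for the control-operator results. Two small points you should make explicit when writing this up: that the order interval containing $C'(B_{Y'})$ may be taken symmetric of the form $[-e,e]$ (since $0\in C'(B_{Y'})$, replace $[a,b]$ by $[-(b-a),b-a]$), and that the closure of the image of $X_{1,+}$ in $\widetilde X$ is a proper (not merely a) cone --- this follows from the additivity of the norm on it, since $v,-v\ge 0$ forces $2\norm{v}=\norm{v+(-v)}=0$.
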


\begin{proof}
    We know from Corollary~\ref{cor:inherited-properties-of-extrapolation-space}(c), that the cone of $(X')_{-1}$ is normal.
    Therefore, we may deduce from Proposition~\ref{prop:unit-ball-into-order-bounded} that there exists an ordered Banach space $\widetilde X$ with a unit such that $C'$ factorises as 
    \[
            C': Y' \overset{C_1}{\longrightarrow} \widetilde X \overset{C_2}{\longrightarrow} (X')_{-1};
    \]
    for an operator $C_1 \in \calL(Y',\widetilde X)$ and a positive operator $C_2 \in \calL(\widetilde X, (X')_{-1})$.
    Taking duals yields 
    \[
            C'': (X_1)'' \overset{C_2'}{\longrightarrow} (\widetilde X)' \overset{C_1'}{\longrightarrow} Y'';
    \]
    here we have used the equality $(X')_{-1}=(X_1)'$. Restricting ourselves to $X_1$ and employing the reflexivity of $Y$ gives the factorisation
    \[
            C: X_1 \overset{C_2'}{\longrightarrow} (\widetilde X)' \overset{C_1'}{\longrightarrow} Y.
    \]

    Finally, the norm on $(\widetilde X)'$ is additive on its positive cone because $\widetilde X$ has a unit \cite[Lemmata~2 and~3]{Ng1969}. Moreover, $C_2'$ is positive because $C_2$ is. In particular, all assumptions of Theorem~\ref{thm:admissibility-observation-factorisation} are fulfilled and thus $C$ is $L^{1}$-admissible. 
\end{proof}

\subsection{Zero-class $L^1$-admissibility}
    \label{sec:zero-class-output}

While $L^1$-admissible observation operators need not be zero-class $L^1$-admissible (Example~\ref{exa:shift-L1-endpoint}), we are interested in situations when this is indeed the case. To this end, we take advantage of the nilpotency of the right-translation semigroup on $L^1([0,1], Y)$ to formulate a simple test for $L^1$-admissibility to be zero-class.

\begin{proposition}
    \label{prop:zero-class-translation}
    Let $(T(t))_{t\ge 0}$ be a $C_0$-semigroup on a Banach space $X$, let $Y$ be a Banach space, and let $(R_t)_{t\ge 0}$ denote the nilpotent right-translation semigroup on $L^1([0,1], Y)$.
    
    Suppose that $C \in \calL(X_1,Y)$ is $L^1$-admissible.
    Then $C$ is zero-class $L^1$-admissible if and only if $\lim_{t\uparrow 1}\norm{R_{t}\Psi_1}_{L^1}=0$, where $\Psi_{(\argument)}$ is the output map defined in~\eqref{eq:output-operator}.
\end{proposition}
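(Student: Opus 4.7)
The plan is to observe that right-translation of $\Psi_1 x$ by $t$ is, up to a harmless substitution, literally the output map $\Psi_{1-t}x$ zero-extended to $[0,1]$, so the operator norms are equal and the two limits in the statement agree.

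First, I would spell out the action. For $x \in X_1$ and $t\in [0,1]$, the function $\Psi_1 x = CT(\argument)x \in L^1([0,1],Y)$ is mapped by $R_t$ to
\[
    (R_t \Psi_1 x)(s) =
    \begin{cases}
        CT(s-t)x & \text{if } s\in [t,1],\\
        0        & \text{if } s\in [0,t).
    \end{cases}
\]
A change of variables $u = s-t$ then gives
\[
    \norm{R_t \Psi_1 x}_{L^1([0,1],Y)}
    = \int_t^1 \norm{CT(s-t)x}_Y \dx s
    = \int_0^{1-t} \norm{CT(u)x}_Y \dx u
    = \norm{\Psi_{1-t} x}_{L^1([0,1-t],Y)}.
\]

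Next, since $C$ is $L^1$-admissible, each $\Psi_{\tau}$ extends uniquely to a bounded operator $X \to L^1([0,\tau],Y)$, and since $R_t\Psi_1$ is bounded as a composition, it extends to a bounded operator $X \to L^1([0,1],Y)$. By density of $X_1$ in $X$, the identity above extends from $X_1$ to all $x \in X$, giving
\[
    \norm{R_t\Psi_1}_{\calL(X,L^1([0,1],Y))}
    = \norm{\Psi_{1-t}}_{\calL(X,L^1([0,1-t],Y))}
    \qquad \text{for all } t\in [0,1].
\]

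Finally, letting $\tau := 1-t$, the right-hand side runs over exactly the family whose behaviour as $\tau\downarrow 0$ defines zero-class $L^1$-admissibility, while the left-hand side is the quantity in the statement as $t\uparrow 1$. Hence $\lim_{t\uparrow 1}\norm{R_t\Psi_1}_{L^1} = 0$ if and only if $\lim_{\tau\downarrow 0}\norm{\Psi_\tau} = 0$, proving the equivalence. The only non-routine point is bookkeeping of the densely defined vs.\ extended operators, but this is immediate from $L^1$-admissibility.
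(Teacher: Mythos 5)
Your proof is correct and follows essentially the same route as the paper: both compute $\norm{R_{1-\tau}\Psi_1 x}_{L^1} = \norm{\Psi_\tau x}_{L^1}$ for $x \in X_1$ via the same change of variables and conclude by reparametrising $t = 1-\tau$. Your extra remark about extending the identity from $X_1$ to $X$ by density is a point the paper leaves implicit, but it is the same argument.
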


\begin{proof}
    For each $\tau<1$ and $x\in X_1$, we have
    \begin{align*}
        \norm{R_{1-\tau}\Psi_1x}_{L^1} & = \int_0^1 \norm{(R_{1-\tau}\Psi_1x)(s)}_Y \dx s
                                   = \int_{1-\tau}^1 \norm{(\Psi_1x)(s-1+\tau)}_Y \dx s\\
                                   & = \int_{1-\tau}^1 \norm{CT(s-1+\tau)x}_Y \dx s
                                   = \int_0^{\tau} \norm{CT(s)x}_Y\dx s
                                   = \norm{\Psi_{\tau}x}_{L^1}.
    \end{align*}
    The assertion is now immediate.
\end{proof}

A subset $S$ of a Banach lattice $X$ is called \emph{almost order-bounded} if for each $\varepsilon>0$ there exists $x_{\varepsilon} \in X_+$ such that
$
    \norm{(\modulus{x}-x_{\varepsilon})_+}<\varepsilon
$
for all $x\in S$. Equivalently, if for each $\varepsilon>0$ there exists $x_{\varepsilon} \in X_+$ such that
\[
    S \subseteq [-x_{\varepsilon},x_{\varepsilon}]+\varepsilon B_X;
\]
where $B_X$ denotes the closed unit ball of $X$. 
Recall that an \emph{AL-space} is a Banach lattice whose norm is additive on the positive cone (cf.\ Table~\ref{table:properties}).

\begin{theorem}
    \label{thm:zero-class-observation-sufficient}
    Let $(T(t))_{t\ge 0}$ be a $C_0$-semigroup on a reflexive Banach space $X$ and let $Y$ be an AL-space.     
    If $C \in \calL(X_1,Y)$ is $L^1$-admissible, then it is also zero-class $L^1$-admissible.
\end{theorem}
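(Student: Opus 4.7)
My plan is to apply Proposition~\ref{prop:zero-class-translation} and reduce the claim to showing that $\|R_t\Psi_1\|_{\calL(X,L^1([0,1],Y))}\to 0$ as $t\uparrow 1$, where $(R_t)$ denotes the nilpotent right-translation semigroup on $L^1([0,1],Y)$. A first observation is that $L^1([0,1],Y)$ is itself an AL-space when $Y$ is: if $f,g\ge 0$ almost everywhere, then additivity of $\|\argument\|_Y$ on $Y_+$ yields
\[
\|f+g\|_{L^1}=\int_0^1\bigl(\|f(s)\|_Y+\|g(s)\|_Y\bigr)\dx s=\|f\|_{L^1}+\|g\|_{L^1}.
\]
Moreover, $(R_t)$ is a positive contraction semigroup on this Banach lattice with $R_1=0$.

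Next I would exploit the reflexivity of $X$: since $B_X$ is weakly compact and $\Psi_1\in\calL(X,L^1([0,1],Y))$ is weak-to-weak continuous, $\Psi_1(B_X)$ is weakly relatively compact in $L^1([0,1],Y)$. The crucial input is the classical characterization (essentially the Dunford--Pettis theorem in its Banach-lattice version) that weakly relatively compact subsets of an AL-space are almost order bounded. Given $\varepsilon>0$, this yields $g_\varepsilon\in L^1([0,1],Y)_+$ with $\bigl\|(|f|-g_\varepsilon)_+\bigr\|_{L^1}\le\varepsilon$ for all $f\in\Psi_1(B_X)$.

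From the decomposition $|f|\le g_\varepsilon+(|f|-g_\varepsilon)_+$ and positivity of $R_t$, I obtain
\[
\|R_tf\|_{L^1}\le\bigl\|R_t|f|\bigr\|_{L^1}\le\|R_tg_\varepsilon\|_{L^1}+\bigl\|R_t(|f|-g_\varepsilon)_+\bigr\|_{L^1}\le\|R_tg_\varepsilon\|_{L^1}+\varepsilon,
\]
where I have also used $\|R_t\|\le 1$. Strong continuity of $(R_t)$ at $t=1$ gives $R_tg_\varepsilon\to R_1g_\varepsilon=0$ in norm, so $\limsup_{t\uparrow 1}\sup_{f\in\Psi_1(B_X)}\|R_tf\|_{L^1}\le\varepsilon$; letting $\varepsilon\downarrow 0$ finishes the proof once combined with Proposition~\ref{prop:zero-class-translation}.

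The main obstacle I expect is the vector-valued form of the ``weakly relatively compact equals almost order bounded'' characterization in $L^1([0,1],Y)$: for scalar $Y$ this is the classical Dunford--Pettis theorem, while for a general AL-space $Y$ one must observe, as above, that $L^1([0,1],Y)$ is itself an AL-space and then invoke the Banach-lattice form of the theorem. Everything else reduces to the standard positivity and contractivity properties of right-translation on a Banach lattice.
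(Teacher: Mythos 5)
Your proposal is correct and follows essentially the same route as the paper: reflexivity gives weak compactness of $\Psi_1(B_X)$, the AL-structure of $L^1([0,1],Y)$ plus the Dunford--Pettis theorem gives almost order boundedness, and the nilpotent right-translation semigroup then converges to zero uniformly on that set, which is exactly the reduction via Proposition~\ref{prop:zero-class-translation}. The only difference is that you write out the $\varepsilon$-argument for uniform convergence on almost order-bounded sets explicitly, which the paper leaves implicit.
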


\begin{proof}
    Reflexivity of $X$ implies weak compactness of the output operator $\Psi_1$, so $\Psi_1( B_{X})$ is relatively weakly compact. Next, as $Y$ is an AL-space, so is $L^1([0,1], Y)$. 
    Consequently, $\Psi_1( B_{X})$ is almost order-bounded by the Dunford-Pettis theorem \cite[Theorem~2.5.4]{Meyer-Nieberg1991}. 
    Letting $(R_t)_{t\ge 0}$ be as in Lemma~\ref{prop:zero-class-translation}, its
    strong continuity and nilpotency now implies that it converges to $0$ uniformly on $\Psi_1( B_{X})$ as $t\uparrow 1$. Consequently, $C$ is zero-class $L^1$-admissible by Lemma~\ref{prop:zero-class-translation}.
\end{proof}

Note that all of the order assumptions in Theorem~\ref{thm:zero-class-observation-sufficient} are posed on $Y$ whereas $X$ is a general Banach space. We don't know if the Dunford-Pettis theorem \cite[Theorem~2.5.4]{Meyer-Nieberg1991} can be generalised to ordered Banach spaces whose norm are additive on the positive cone. Therefore, the proof above required that $Y$ is a Banach lattice.

\begin{remark}
    In the proof of Theorem~\ref{thm:zero-class-observation-sufficient}, reflexivity was needed solely for $\Psi_1$ to be weakly compact. In other words, for AL-space valued $L^1$-admissible observation operators, weak compactness of the output map implies zero-class $L^1$-admissibility.
\end{remark}

 The following example shows that the reflexivity assumption in Theorem~\ref{thm:zero-class-observation-sufficient} cannot be dropped, in general; see also \cite[Example~26]{JacobSchwenningerZwart2019}.

\begin{example}
    \label{exa:shift-L1-endpoint}
    Consider the left translation semigroup on $X:=L^1[0,1]$ with observation $C:X_1\to \bbC$ given by $f\mapsto f(0)$. The corresponding output operator $\Psi_{\tau}:X_1 \to L^1([0,\tau],Y)$ satisfies $\norm{\Psi_{\tau}}=1$ for  each $\tau>0$. Therefore, $C$ is $L^1$-admissible but the admissibility is not zero-class.
\end{example}

Combining Proposition~\ref{prop:admissibility-observation-additive-norm} with Theorem~\ref{thm:zero-class-observation-sufficient}  yields the following sufficient condition for zero-class $L^1$-admissibility of positive observation operators. Yet another condition for zero-class $L^1$-admissibility of observation operators is given in Lemma~\ref{lem:zero-class-observation-dual}.

\begin{corollary}
    Suppose that Assumption~\ref{ass:general} holds. If $X$ is reflexive, $Y$ is an AL-space and $C \in \calL(X_1,Y)$ is positive, then $C$ is zero-class $L^1$-admissible.
\end{corollary}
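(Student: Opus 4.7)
The plan is to simply chain together the two main ingredients that have already been established earlier in the section, namely Proposition~\ref{prop:admissibility-observation-additive-norm} and Theorem~\ref{thm:zero-class-observation-sufficient}. There is essentially no new work to do beyond verifying that the hypotheses line up.

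First, I would observe that since $Y$ is an AL-space, it is in particular a Banach lattice whose norm is additive on the positive cone, so it is an ordered Banach space satisfying the hypothesis of Proposition~\ref{prop:admissibility-observation-additive-norm}. All remaining hypotheses of that proposition are explicitly included in Assumption~\ref{ass:general} together with the positivity of $C$. Therefore, Proposition~\ref{prop:admissibility-observation-additive-norm} applies and we conclude that $C$ is $L^1$-admissible.

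Second, I would apply Theorem~\ref{thm:zero-class-observation-sufficient} to the $L^1$-admissible operator $C$. The hypotheses of that theorem are the reflexivity of $X$, the fact that $Y$ is an AL-space, and $L^1$-admissibility of $C \in \calL(X_1,Y)$; all three are now available. The conclusion gives zero-class $L^1$-admissibility, which is exactly what is claimed.

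The only thing worth flagging is that this is a genuinely short composition of results, so the ``main obstacle'' is merely bookkeeping: one must notice that although Assumption~\ref{ass:general} only asks $Y$ to be a Banach space, the extra hypothesis that $Y$ is an AL-space supplies simultaneously the ordered-Banach-space structure with additive norm needed for Proposition~\ref{prop:admissibility-observation-additive-norm} and the Banach-lattice structure needed to invoke the Dunford--Pettis argument inside Theorem~\ref{thm:zero-class-observation-sufficient}. No further computation, approximation, or order-theoretic argument is required.
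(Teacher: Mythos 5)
Your proposal is correct and is exactly the paper's argument: the corollary is stated there as the immediate combination of Proposition~\ref{prop:admissibility-observation-additive-norm} (which gives $L^1$-admissibility of the positive operator $C$, since an AL-space has norm additive on the cone) with Theorem~\ref{thm:zero-class-observation-sufficient} (which upgrades this to zero-class using reflexivity of $X$ and the AL-structure of $Y$). Nothing is missing.
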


\section{Admissibility of control operators}
    \label{sec:admissibility-control}

In this section, we shift our attention to admissibility considerations regarding the system
\begin{equation}
    \label{eq:system-control}
    \left.
        \begin{aligned}
            \dot{x}(t) &= Ax(t)+Bu(t),\quad t\ge 0\\
            x(0)       &= x_0
        \end{aligned}
    \quad\right\}\ \Sigma(A,B),
\end{equation}
where $A$ generates a $C_0$-semigroup $(T(t))_{t\ge 0}$ on a Banach space $X$ and the \emph{control} operator $B\in\calL(U,X_{-1})$ is defined on a Banach space $U$. Most of our results in this section are proved in the following setting:

\begin{assumption}
    \label{ass:generalB}
    Suppose that $X$ is an ordered Banach space with a generating and normal cone and $(T(t))_{t\ge 0}$ is a positive $C_0$-semigroup on $X$. 
    Moreover, let $U$ be a Banach space and $B\in \calL(U,X_{-1})$.
\end{assumption}

In what follows, we let $\mathrm{T}([0,\tau], U)$ denote the space of all $U$-valued step functions on $[0,\tau]$, i.e., a piecewise constant function with finitely many jumps. This is a normed space when equipped with the supremum norm whose closure is called the space of \emph{regulated} functions -- denoted by $\Reg([0,\tau], U)$.
Now, let $\mathrm Z$ be a placeholder for $L^\infty$ or $\mathrm C$ or $\Reg$. Corresponding to the system in~\eqref{eq:system-control}, we define the \emph{input map}
\begin{equation}
    \label{eq:input-operator}
        \Phi_{\tau}:\mathrm{Z}([0,\tau],U)\to       X_{-1},\qquad
                                        u \mapsto   \int_0^{\tau}T_{-1}(\tau-s)Bu(s)\dx s
\end{equation}
for fixed $\tau>0$. 
The operator $B$ is called a \emph{$\mathrm Z$-admissible control operator} if $\Ima \Phi_{\tau} \subseteq X$ for some (equivalently, all) $\tau>0$. Additionally, if  $\limsup_{\tau\downarrow 0} \norm{\Phi_{\tau}}_{\calL(\mathrm{Z}([0,\tau],U), X)}=0$, then we say that $B$ is a \emph{zero-class $\mathrm{Z}$-admissible control operator}. 
The admissibility of control operators was recently generalized to a notion closely related to maximal regularity in \cite[Section~2]{AroraMironchenko2025}

For $\omega \in \bbC$, it is easy to see that $B$ is $\mathrm Z$-admissible for $(T(t))_{t\ge 0}$ if and only if $B$ is $\mathrm Z$-admissible for $(e^{\omega t} T(t))_{t\ge 0}$. For this reason, when proving admissibility of control operators, we often assume (without loss of generality) that $\gbd(A)<0$ thereby having $0\in \resSet(A)$.

 Since $C([0,\tau],U)\subseteq \Reg([0,\tau],U)$, every $\Reg$-admissible control operator is $\mathrm{C}$-admissible. In fact, the converse is also true and seems to be folklore. The proof for the particular case $U=X$ and $B= A_{-1}$ is given in \cite[Proposition~2.2]{JacobSchwenningerWintermayr2022} and the proof for the general case follows mutatis mutandis; see also \cite[Proposition~2.1]{AroraSchwenninger2024}. As we use this observation multiple times in the sequel, we state it explicitly:

\begin{proposition}
    \label{prop:regulated-admissibility-equivalence}
    Let $(T(t))_{t \ge 0}$ be a $C_0$-semigroup on a Banach space $X$, let $U$ be a Banach space, and let $B\in \calL(U, X_{-1})$.
    The operator $B$ is $\mathrm{C}$-admissible if and only if $B$ is $\Reg$-admissible.
\end{proposition}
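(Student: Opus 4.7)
The plan is to prove the nontrivial implication, namely that $\mathrm{C}$-admissibility implies $\Reg$-admissibility; the reverse is immediate from $C([0,\tau],U)\subseteq \Reg([0,\tau],U)$. Since the step functions $\mathrm{T}([0,\tau],U)$ are, by definition, dense in $\Reg([0,\tau],U)$ with respect to the supremum norm, it suffices to prove the following key claim: if $B$ is $\mathrm{C}$-admissible with constant $K_\tau$, then every step function $u$ satisfies $\Phi_\tau u\in X$ with $\norm{\Phi_\tau u}_X\le K_\tau\norm{u}_\infty$. For arbitrary $u\in\Reg([0,\tau],U)$ one then chooses step functions $u_n\to u$ uniformly, observes that $(\Phi_\tau u_n)$ is Cauchy in $X$ by the claim applied to $u_n-u_m$, and identifies its $X$-limit with $\Phi_\tau u$ via the a~priori convergence $\Phi_\tau u_n\to \Phi_\tau u$ in $X_{-1}$.

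That $\Phi_\tau u\in X$ holds for every step function $u$ follows from the basic fact that the primitive $\phi_y(t):=\int_0^t T_{-1}(s)y\dx s$ lies in $\dom A_{-1}=X$ and depends $X$-norm continuously on $t$, for every $y\in X_{-1}$. Writing $u=\sum_k u_k\chi_{[t_{k-1},t_k)}$ and substituting, $\Phi_\tau u$ becomes a finite linear combination of differences $\phi_{Bu_k}(\tau-t_{k-1})-\phi_{Bu_k}(\tau-t_k)$, each belonging to $X$.

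The quantitative bound is the substantial step. Given a step function $u$ with jump points $t_k$, I would approximate it by continuous $u_n\in C([0,\tau],U)$ that coincide with $u$ away from short ramps of length $1/n$ placed at each jump, using piecewise linear interpolation so that $\norm{u_n}_\infty\le \norm{u}_\infty$. By $\mathrm{C}$-admissibility, $\Phi_\tau u_n\in X$ and $\norm{\Phi_\tau u_n}_X\le K_\tau\norm{u}_\infty$. The crux is to upgrade the obvious $X_{-1}$-convergence $\Phi_\tau u_n\to \Phi_\tau u$ to $X$-norm convergence, after which lower semicontinuity of $\norm{\argument}_X$ yields the desired bound. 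This upgrade proceeds by reducing the difference $\Phi_\tau u_n-\Phi_\tau u$ to a finite sum of integrals of the form $\int_I \alpha_n(s)T_{-1}(\tau-s)Bu_k\dx s$ over the short ramp intervals $I$, where $\alpha_n$ is the ramp weight, and performing an integration by parts based on the identity $\tfrac{d}{dr}\phi_y(r)=T_{-1}(r)y$, which is valid in $X_{-1}$. The outcome rewrites each contribution as a boundary term of the form $\pm\phi_{Bu_k}(\tau-t_k)$ together with an averaging expression $n\int_{\tau-t_k}^{\tau-t_k+1/n}\phi_{Bu_k}(r)\dx r$; because $\phi_{Bu_k}$ is $X$-norm continuous, the latter tends to $\phi_{Bu_k}(\tau-t_k)$ in $X$, so each contribution vanishes in $X$-norm as $n\to\infty$.

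The main obstacle is precisely this upgrade: the integration-by-parts identity is naturally derived in $X_{-1}$, yet both of its sides happen to live in the smaller space $X$ thanks to the $X$-continuity of $\phi_y$, and only this finer regularity turns convergence in $X_{-1}$ into convergence in $X$. Once the bound for step functions is in hand, the density argument outlined in the first paragraph closes the proof.
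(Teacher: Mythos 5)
Your argument is correct and self-contained. Note that the paper does not actually prove this proposition: it records it as folklore and refers to \cite[Proposition~2.2]{JacobSchwenningerWintermayr2022} for the special case $U=X$, $B=A_{-1}$, so your ramp-approximation of step functions by continuous functions of no larger supremum norm, combined with integration by parts and the $X$-continuity of $r\mapsto\phi_y(r)=\int_0^r T_{-1}(s)y\dx s$, is precisely the kind of ``mutatis mutandis'' argument being invoked, and it works. Two points you use tacitly but should record: the constant $K_\tau$ exists because $\mathrm{C}$-admissibility plus the closed graph theorem makes $\Phi_\tau$ bounded from $\mathrm{C}([0,\tau],U)$ into $X$; and the $X$-continuity of $\phi_y$ holds because the norm of $X=\dom A_{-1}$ is equivalent to the graph norm of $A_{-1}$, while both $\phi_y$ and $A_{-1}\phi_y(\argument)=T_{-1}(\argument)y-y$ are continuous into $X_{-1}$.
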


\subsection{Positivity of $B$ in terms of the boundary control operator}
\label{subsec:positive-B-via-boundary-operator}

As explained in the introduction, the additive input term $Bu$ in~\eqref{eq:system-control} with an operator $B: U \to X_{-1}$ is often used to encode boundary control. 
Since several results in this section use the assumption that $B$ is positive, we will first describe how the positivity of $B$ can be described in terms of the boundary operator. 
The precise setting is as follows.

Let $X$ and $U$ be ordered Banach spaces, let $\dom\mathfrak{A}$ be a vector subspace of $X$ that is a Banach space with respect to a stronger norm, and let $\mathfrak{A}:\dom\mathfrak{A} \to X$ and $\mathfrak{B}:\dom\mathfrak{A}\to U$ be bounded linear operators (but the norm on $\dom \mathfrak{A}$ does not need to be the graph norm of $\mathfrak{A}$ nor does $\mathfrak{A}$ need to be closed as an operator on $X$). 
Assume that $\mathfrak{B}$ is surjective and that the restriction $A:=\mathfrak{A}|_{\ker\mathfrak{B}}$ generates a positive $C_{0}$-semigroup on $X$.
For every $\lambda \in \resSet(A)$, one has the following properties (see e.g.\ \cite[Lemma~1.2]{Greiner1987}): 
the domain $\dom\mathfrak{A}$ is the direct sum of its subspaces $\dom (A) = \ker \mathfrak B$ and $\ker(\lambda - \mathfrak{A})$ and the restriction $\mathfrak{B}|_{\ker(\lambda - \mathfrak{A})}$ is a bijection from $\ker(\lambda - \mathfrak{A})$ to $U$. 
We denote its inverse by $B_\lambda: U \to \ker(\lambda - \mathfrak{A}) \subseteq \dom \mathfrak{A} \subseteq X$ and note that $B_\lambda$ is bounded from $U$ to $\dom \mathfrak{A}$ by the continuous inverse theorem. 
The boundary control problem
\begin{align*}
    \dot{x}(t)={}&\mathfrak{A}x(t),\quad t>0,\quad x(0)=0,\\
    \mathfrak{B}x(t)={}&u(t)
\end{align*}
with control $u(t) \in U$ can, under appropriate assumptions, be reformulated as the system~\eqref{eq:system-control} if one chooses $B := (\lambda-A_{-1})B_\lambda: U \to X_{-1}$, where $A_{-1}$ is the generator of the extrapolated $C_0$-semigroup on $X_{-1}$; see \cite[Proposition~2.8]{Schwenninger2020}. 
One can check that the operator $B$ does not depend on the choice of $\lambda$ (see for instance \cite[Formula~(1.16) in Lemma~1.3]{Greiner1987}). 
We now characterise positivity of $B$ in terms of  $B_\lambda$. 

\begin{proposition}
    \label{prop:positive-B-via-boundary-operator}
    In the setting above, the following assertions are equivalent:
    \begin{enumerate}[\upshape(i)]
        \item  
        The operator $B: U \to X_{-1}$ is positive. 

        \item 
        The operator $B_\lambda: U \to X$ is positive for all sufficiently large $\lambda \in \bbR$.

        \item 
        The operator $B_\lambda: U \to X$ is positive for all $\lambda > \spb(A)$.
    \end{enumerate}
\end{proposition}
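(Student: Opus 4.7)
The plan is to prove the chain (i) $\Rightarrow$ (iii) $\Rightarrow$ (ii) $\Rightarrow$ (i). The crucial structural identity is
\[
    B_\lambda = \Res(\lambda, A_{-1})\, B \qquad (\lambda \in \resSet(A)),
\]
which follows by rearranging $B = (\lambda - A_{-1})B_\lambda$; this connects $B$ and $B_\lambda$ via a resolvent whose positivity is controlled by Proposition~\ref{prop:order:extrapolation}(c).

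For \Implies{i}{iii}, I would fix $\lambda > \spb(A)$ and use Proposition~\ref{prop:order:extrapolation}(c) to conclude that $\Res(\lambda, A_{-1})$ maps $X_{-1,+}$ into $X_+$. Composing the positive operator $B : U \to X_{-1}$ with $\Res(\lambda, A_{-1})$ yields positivity of $B_\lambda : U \to X$. The step \Implies{iii}{ii} is immediate, since (iii) is strictly stronger than (ii).

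For the nontrivial direction \Implies{ii}{i}, I would recover $B$ from the operators $B_\lambda$ by an approximation argument. Let $u \in U_+$. For all $\lambda$ large enough we have
\[
    \lambda B_\lambda u = \lambda \Res(\lambda, A_{-1}) Bu \in X_+,
\]
and since $(T_{-1}(t))_{t \ge 0}$ is a $C_0$-semigroup on $X_{-1}$, the standard resolvent approximation of the identity gives $\lambda \Res(\lambda, A_{-1}) Bu \to Bu$ in $X_{-1}$ as $\lambda \to \infty$. By Proposition~\ref{prop:order:extrapolation}(b) each $\lambda B_\lambda u$ lies in $X_+ \subseteq X_{-1,+}$, and because $X_{-1,+}$ is closed in $X_{-1}$ by construction, the limit satisfies $Bu \in X_{-1,+}$. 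This proves positivity of $B$.

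The main obstacle is really only bookkeeping: one must observe that the formula $B_\lambda = \Res(\lambda, A_{-1})B$ is independent of the chosen $\lambda \in \resSet(A)$ (which is exactly why the operator $B$ in the setup is well-defined), and that the resolvent approximation on the extrapolation space is applicable since $Bu \in X_{-1}$ rather than $X$. No delicate order-theoretic construction is required, and Assumption~\ref{ass:generalB} is not even needed in full strength; only the positivity of the semigroup (giving positivity of $\Res(\lambda, A_{-1}): X_{-1} \to X$ for $\lambda > \spb(A)$) and the fact that $X_{-1,+}$ is a closed cone enter the argument.
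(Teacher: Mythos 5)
Your proposal is correct and follows essentially the same route as the paper: (i) $\Rightarrow$ (iii) via the identity $B_\lambda = \Res(\lambda, A_{-1})B$ and the positivity of $\Res(\lambda,A_{-1}) : X_{-1} \to X$ from Proposition~\ref{prop:order:extrapolation}(c), the trivial step (iii) $\Rightarrow$ (ii), and (ii) $\Rightarrow$ (i) by the resolvent approximation $\lambda B_\lambda u = \lambda \Res(\lambda,A_{-1})Bu \to Bu$ in $X_{-1}$ together with closedness of $X_{-1,+}$. The paper's proof is identical in substance, merely more terse.
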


\begin{proof}
    ``(i) $\Rightarrow$ (iii)'': 
    Fix $\lambda > \spb(A)$. 
    Then $\Res(\lambda,A_{-1})$ is a positive operator from $X_{-1}$ to $X$ according to Proposition~\ref{prop:order:extrapolation}(c). 
    Hence, $B_\lambda = \Res(\lambda,A_{-1})B$ is positive.
    
    ``(iii) $\Rightarrow$ (ii)'': 
    This implication is obvious. 

    ``(ii) $\Rightarrow$ (i)'': 
    For $u \in U_+$, one has 
    \[
        Bu 
        = 
        \lim_{\lambda \to \infty} \lambda \Res(\lambda,A_{-1}) Bu 
        = 
        \lim_{\lambda \to \infty} \lambda B_\lambda u 
        \in 
        X_{-1,+}
        ,
    \]
    where both limits are norm limits in $X_{-1}$.
\end{proof}

We point out that the equivalence of (i) and (iii) in Proposition~\ref{prop:positive-B-via-boundary-operator} above was also noted in \cite[Lemma~2.1]{Gantouh2022a} and on \cite[Page~16]{Gantouh2024}.
In a similar vein, if one already knows that a boundary control system is admissible, then the positivity of the operators $B_\lambda$ characterises whether positive inputs lead to positive trajectories \cite[Proposition~4.3]{EngelKramar17}. 
The condition that all $B_\lambda$ be positive also occurs in assumption~(ii) of \cite[Theorem~4.3]{Gantouh2022a}.
It is instructive to see what Proposition~\ref{prop:positive-B-via-boundary-operator} says for the Laplace operator.  
The following example demonstrates that the positivity condition on $B_\lambda$ can be interpreted as the maximum principle for the Laplace operator in this case.

\begin{example}
    Let $\Omega \subseteq \bbR^d$ be a bounded smooth domain (say, for simplicity, with $C^\infty$-boundary), set $X := L^p(\Omega)$ for some $p \in (1,\infty)$, and let $\mathfrak{A}$ denote the Laplace operator with domain $W^{2,p}(\Omega)$. 
    Let $U \subseteq L^p(\partial \Omega)$ denote the image of the trace operator on $W^{2,p}(\Omega)$, endowed with the order inherited from $L^p(\partial \Omega)$ and let $\mathfrak{B}: W^{2,p}(\Omega) \to U$ be the trace operator. 

    Then, in the notation of Proposition~\ref{prop:positive-B-via-boundary-operator}, $A$ is the Dirichlet Laplace operator on $L^p(\Omega)$. 
    For $\lambda > \spb(A)$ and $u \in U$, the function $v := B_\lambda u \in W^{2,p}(\Omega)$ solves the boundary value problem
    \begin{align*}
        (\lambda-\Delta) v & = 0, \\ 
        v|_{\partial \Omega} & = u.
    \end{align*}
    So if $p \ge 2$, then it follows from the maximum principle for Sobolev functions \cite[Theorem~8.1]{GilbargTrudinger2001} that the operator $B_\lambda$ is positive. 
    Hence, Proposition~\ref{prop:positive-B-via-boundary-operator} yields that $B$ is positive if $p \ge 2$.
\end{example}

\subsection{Input admissibility for positive semigroups}

If $U$ and $X$ in Assumption~\ref{ass:generalB} are reflexive and if
$B\in \calL(U, X_{-1})$ maps the unit ball of $U$ into an order-bounded subset of $X_{-1}$, then the same assumption holds for the double dual $B''\in \calL(U'', (X'')_{-1})$. Due to the reflexivity of $U$, we obtain that $B'$ is an $L^1$-admissible observation operator by Corollary~\ref{cor:admissibility-observation-order-bounded}. Therefore, reflexivity of $X$ allows us to appeal to the Weiss duality theorem \cite[Theorem~6.9(ii)]{Weiss1989a} to obtain that $B$ is $L^\infty$-admissible. 
Without the reflexivity assumptions on $U$ and $X$, we are able to show that $L^\infty$-admissibility is always zero-class. 

In what follows, we repeatedly use the following property of normal cones. 
If $X$ is an ordered Banach space with a normal cone, then by \cite[Theorem~2.38]{AliprantisTourky2007} there exists $c>0$ such that
\begin{equation}
    \label{eq:normality-equivalent}
    x \in [a,b] \quad \Rightarrow \quad \norm{x} \le c \max\{ \norm a, \norm b\}.
\end{equation}

\begin{theorem}
    \label{thm:zero-class-control-order-bounded}
    Suppose that Assumption~\ref{ass:generalB} holds and that $B\in \calL(U, X_{-1})$ maps the unit ball of $U$ into an order-bounded subset of $X_{-1}$.
    \begin{enumerate}[\upshape (a)]
        \item 
        If $B$ is $L^\infty$-admissible, then it is zero-class $L^\infty$-admissible.
        
        \item 
        If both $X$ and $U$ are reflexive, then $B$ is zero-class $L^\infty$-admissible. 

        \item 
        If $B$ is $\mathrm C$-admissible, then it is zero-class $\mathrm C$-admissible.
    \end{enumerate} 
\end{theorem}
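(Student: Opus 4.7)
My plan is to deduce part (a) from an order sandwich combined with a resolvent identity for the time integral, and then obtain part (b) directly from (a) using the duality argument sketched just before the theorem.

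For part (a), I would first reduce to the case $0 \in \resSet(A)$ by replacing $A$ with $A - \lambda$ for some $\lambda > \spb(A)$; this substitution preserves positivity, $L^\infty$-admissibility, and order-boundedness of $B(B_U)$, and yields an equivalent norm on $X_{-1}$. Under this normalization, $A_{-1} \colon X \to X_{-1}$ is a Banach space isomorphism, with bounded inverse $A_{-1}^{-1} \colon X_{-1} \to X$. Next, pick $c, d \in X_{-1}$ with $B(B_U) \subseteq [c, d]$ in $X_{-1}$. For any $u \in L^\infty([0,\tau], U)$ with $\norm{u}_\infty \le 1$, one has $Bu(s) \in [c, d]$ for a.e.\ $s$. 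Applying the positive operator $T_{-1}(\tau - s)$ pointwise and integrating (the cone $X_{-1,+}$ being closed) yields
\[
    \Phi_\tau u \in [G_\tau c,\; G_\tau d]_{X_{-1}}, \qquad G_\tau y := \int_0^\tau T_{-1}(s) y \dx s,
\]
for every $y \in X_{-1}$. The standard identity $A_{-1} G_\tau y = T_{-1}(\tau) y - y$ places $G_\tau y$ inside $\dom A_{-1} = X$ and, via inversion of $A_{-1}$, gives the norm estimate
\[
    \norm{G_\tau y}_X \le \norm{A_{-1}^{-1}}_{X_{-1} \to X} \; \norm{T_{-1}(\tau) y - y}_{-1} \xrightarrow{\tau \downarrow 0} 0
\]
by strong continuity of the extrapolated semigroup on $X_{-1}$.

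The final step is to transfer the order sandwich from $X_{-1}$ into $X$. By $L^\infty$-admissibility one has $\Phi_\tau u \in X$; combined with $G_\tau c, G_\tau d \in X$ and the identification $X_+ = X_{-1,+} \cap X$ from Proposition~\ref{prop:order:extrapolation}(b), the order interval $[G_\tau c, G_\tau d]_{X_{-1}} \cap X$ coincides with $[G_\tau c, G_\tau d]_X$. Normality of the cone $X_+$, cf.~\eqref{eq:normality-equivalent}, then yields a constant $M \ge 0$, independent of $\tau$ and $u$, with $\norm{\Phi_\tau u}_X \le M \max(\norm{G_\tau c}_X, \norm{G_\tau d}_X)$; the right-hand side tends to $0$ as $\tau \downarrow 0$ uniformly in $u$ with $\norm{u}_\infty \le 1$, establishing zero-class $L^\infty$-admissibility. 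I expect the main subtlety here to be precisely this transfer of the order interval across the embedding $X \hookrightarrow X_{-1}$: it is the interplay between $L^\infty$-admissibility (which places $\Phi_\tau u$ in $X$), Proposition~\ref{prop:order:extrapolation}(b) (which reconciles the two orders), and the resolvent identity for $G_\tau$ (which provides quantitative decay) that does the real work.

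Part (b) is then immediate: the paragraph preceding the theorem shows that under the reflexivity hypotheses, $B'$ is an $L^1$-admissible observation operator by Corollary~\ref{cor:admissibility-observation-order-bounded}, and the Weiss duality theorem converts this into $L^\infty$-admissibility of $B$, to which part (a) then applies.
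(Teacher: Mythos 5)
Your argument is correct and follows essentially the same route as the paper: sandwich $\Phi_\tau u$ between $\int_0^\tau T_{-1}(\tau-s)b_i\dx s$ using positivity, identify these integrals via $A_{-1}G_\tau y = T_{-1}(\tau)y-y$ to get decay in $\tau$, and conclude by normality of $X_+$; part (b) is handled by the same duality reduction. Your explicit justification of the transfer of the order interval from $X_{-1}$ to $X$ via Proposition~\ref{prop:order:extrapolation}(b) is a detail the paper leaves implicit, but it is not a different method.
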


\begin{proof}
    (a) 
    By assumption, there exist $b_1,b_2\in X_{-1}$ such that $B(B_U) \subseteq [b_1,b_2]$.
    Since the corresponding input operator $\Phi_{\tau}:L^\infty([0,\tau], U)\to X_{-1}$ maps into $X$, for each $u\in L^{\infty}([0,\tau],U)$ taking values in $B_U$, we get the inequalities
    \[
        \int_0^{\tau} T_{-1}(\tau-s)b_1\dx s 
        \le 
        \Phi_{\tau}u
        \le 
        \int_0^{\tau} T_{-1}(\tau-s)b_2\dx s
    \]
    in $X$ and hence the norm estimate
    \[ 
        \norm{\Phi_{\tau}u}_X \le c \max_{i=1,2} \norm{\int_0^{\tau} T_{-1}(\tau-s)b_i\dx s}_X,
    \]
    where $c>0$ is such that~\eqref{eq:normality-equivalent} holds.
    Without loss of generality, assume that $\gbd(A)<0$ and that the norm on $X_{-1}$ is given by $\norm{x}_{-1} = \norm{(A_{-1})^{-1}x}_X$ for all $x \in X_{-1}$. It follows that one can estimate $\norm{\Phi_{\tau}u}_X$ from above by
    \[
         \norm{\Phi_{\tau}u}_X \le
                              c \max_{i=1,2}\norm{A_{-1}\int_0^{\tau} T_{-1}(\tau - s)b_i\dx s}_{-1}
                              = c \max_{i=1,2}\norm{T_{-1}(\tau)b_i-b_i}_{-1}
                              \to 0
    \]
    as $\tau\to 0$. In other words, $B$ is zero-class $L^\infty$-admissible. 
    
    (b) 
    This now follows from the discussion at the beginning of the subsection.

    (c)
    The arguments in the proof of~(a) can be repeated mutatis mutandis.
\end{proof}


Dropping the reflexivity assumptions in the second part of Theorem~\ref{thm:zero-class-control-order-bounded} we are able to at least show zero-class $\mathrm C$-admissibility:

\begin{theorem}
    \label{thm:zero-class-C-admissible}
    Suppose that Assumption~\ref{ass:generalB} holds and that $B$ maps the unit ball of $U$ into an order-bounded subset of $X_{-1}$.
    Then $B$ is zero-class $C$-admissible. 
\end{theorem}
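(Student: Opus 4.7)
The plan is to run the sandwich argument of Theorem~\ref{thm:zero-class-control-order-bounded}(a), but without having $\mathrm{C}$-admissibility as a hypothesis. The key idea is that on the dense subspace of step functions $\mathrm{T}([0,\tau],U)$, the input map $\Phi_\tau$ lands in $X$ automatically, so one can perform the order-estimate there and then extend by density, invoking Proposition~\ref{prop:regulated-admissibility-equivalence} at the end. First I would reduce to $\gbd(A)<0$ (harmless for the property in question) and adopt the equivalent $X_{-1}$-norm $\norm{x}_{-1}=\norm{A_{-1}^{-1}x}_X$, and pick $b_1,b_2\in X_{-1}$ with $b_1\le Bu\le b_2$ in $X_{-1}$ for all $u\in B_U$ using the order-boundedness hypothesis.

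For a step function $u=\sum_i u_i \one_{[t_{i-1},t_i)}$ with $\norm{u}_\infty \le 1$, rewriting
\[
    \Phi_\tau u = \sum_i T_{-1}(\tau-t_i)\int_0^{t_i-t_{i-1}} T_{-1}(r) Bu_i \dx r
\]
shows $\Phi_\tau u \in X$ because each inner integral lies in $\dom A_{-1}=X$. Pointwise positivity $T_{-1}(\tau-s)b_1 \le T_{-1}(\tau-s)Bu(s) \le T_{-1}(\tau-s)b_2$ (using that the extrapolated semigroup leaves $X_{-1,+}$ invariant, as noted after Proposition~\ref{prop:order:extrapolation}), combined with closedness of $X_{-1,+}$ under Bochner integration, then yields the sandwich
\[
    y_\tau^1 \;\le\; \Phi_\tau u \;\le\; y_\tau^2 \quad \text{in } X_{-1}, \qquad y_\tau^i := \int_0^\tau T_{-1}(\tau-s)b_i \dx s \in X \ (i=1,2).
\]

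Since $\Phi_\tau u$ and both $y_\tau^i$ lie in $X$, the differences $\Phi_\tau u - y_\tau^1$ and $y_\tau^2 - \Phi_\tau u$ belong to $X \cap X_{-1,+} = X_+$ by Proposition~\ref{prop:order:extrapolation}(b), so the sandwich holds in the order of $X$. Normality of $X_+$ via~\eqref{eq:normality-equivalent} then gives $\norm{\Phi_\tau u}_X \le c\max_{i=1,2}\norm{y_\tau^i}_X$, and the identity $y_\tau^i = A_{-1}^{-1}(T_{-1}(\tau)-I)b_i$ together with strong continuity of $(T_{-1}(t))_{t\ge 0}$ at $0$ forces this maximum to tend to $0$ as $\tau\downarrow 0$. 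The resulting uniform bound $K_\tau$ on step functions, with $K_\tau\to 0$, extends to a bounded operator from $\Reg([0,\tau],U)$ into $X$---this extension agrees with $\Phi_\tau$ because both are continuous into $X_{-1}$ and coincide on the dense set of step functions. So $B$ is zero-class $\Reg$-admissible, and Proposition~\ref{prop:regulated-admissibility-equivalence} upgrades this to zero-class $\mathrm{C}$-admissibility.

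The main obstacle is the transfer from the $X_{-1}$-order sandwich to an $X$-order sandwich, which would in general require $X_+$ to be a face of $X_{-1,+}$---an assumption not available here. The workaround is to carry out the estimate on step functions, where $\Phi_\tau u \in X$ holds for free; only the weaker bipositivity identity $X_+ = X \cap X_{-1,+}$ from Proposition~\ref{prop:order:extrapolation}(b) is then needed to shift the inequalities into the order of $X$.
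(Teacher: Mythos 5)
Your proposal is correct and follows essentially the same route as the paper: establish the estimate on step functions (where $\Phi_\tau u\in X$ automatically), sandwich $\Phi_\tau u$ between $\int_0^\tau T_{-1}(\tau-s)b_i\dx s$ using positivity of the extrapolated semigroup, apply normality, identify $\int_0^\tau T_{-1}(\tau-s)b_i\dx s = A_{-1}^{-1}(T_{-1}(\tau)b_i-b_i)$ to get $K_\tau\to 0$, and extend by density before invoking Proposition~\ref{prop:regulated-admissibility-equivalence}. The only differences are cosmetic: you reprove the step-function fact rather than citing \cite[Lemma~4.3(ii)]{BatkaiJacobVoigtWintermayr2018}, you inline the zero-class computation instead of deferring it to Remark~\ref{rem:zero-class-order-bounded}, and you make explicit (via the bipositivity $X_+=X\cap X_{-1,+}$) the transfer of the order sandwich from $X_{-1}$ to $X$, a point the paper leaves implicit.
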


\begin{proof}
    Let $\tau > 0$.
    For each step function $u\in \mathrm{T}([0,\tau], U)$, we know that
    \[
        \int_0^{\tau} T_{-1}(\tau-s)Bu(s)\dx s \in X
    \]
    from \cite[Lemma~4.3(ii)]{BatkaiJacobVoigtWintermayr2018}; note that the proof of \cite[Lemma~4.3(ii)]{BatkaiJacobVoigtWintermayr2018} does not use any positivity properties. By assumption, there exist $b_1,b_2\in X_{-1}$ such that $B(B_U) \subseteq [b_1,b_2]$. Choose $c>0$ such that~\eqref{eq:normality-equivalent} holds.
    If $u \in \mathrm{T}([0,\tau], U)$ with $\norm{u}_{\infty}\le 1$, then due to the positivity of the semigroup,
    \[
        \norm{\int_0^{\tau} T_{-1}(\tau-s)Bu(s)\dx s} \le c \max_{i=1,2} \norm{\int_0^{\tau} T_{-1}(\tau-s)b_i\dx s} =: K_{\tau}
    \]
    As a result, 
    \[
        \norm{\int_0^{\tau} T_{-1}(\tau-s)Bu(s)\dx s} \le K_{\tau} \norm{u}_{\infty}
    \]
    for all step functions $u \in \mathrm{T}([0,\tau], U)$. Hence, $B$ is $\mathrm{C}$-admissible (see, \cite[Remark~4.1.7]{Wintermayr2019} and Proposition~\ref{prop:regulated-admissibility-equivalence}). 
    The zero-class $\mathrm C$-admissibility now follows from  Theorem~\ref{thm:zero-class-control-order-bounded}(c).
\end{proof}

In particular, the following generalisation of \cite[Corollary~4.2.10]{Wintermayr2019} follows by combining Theorem~\ref{thm:zero-class-C-admissible} and Proposition~\ref{prop:unit-ball-into-order-bounded-automatic}(a).

\begin{corollary}
    \label{cor:zero-class-C-admissible}
    Suppose that Assumption~\ref{ass:generalB} holds and $U$ is an ordered Banach space with a unit.
    If $B\in \calL(U, X_{-1})$ is positive, then it is zero-class $\mathrm C$-admissible. 
\end{corollary}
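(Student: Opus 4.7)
The plan is simply to stitch together the two results named in the corollary's statement: Theorem~\ref{thm:zero-class-C-admissible} reduces the claim to showing that $B(B_U)$ is order-bounded in $X_{-1}$, and Proposition~\ref{prop:unit-ball-into-order-bounded-automatic}(a) from the appendix supplies exactly that fact whenever the domain is an ordered Banach space with a unit. So the entire task is to verify that the hypotheses of both results are met and to chain them together.

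First I would check that Assumption~\ref{ass:generalB} provides the normality of $X_{-1,+}$ that is needed to interpret ``order-bounded'' meaningfully: the cone $X_+$ is normal by assumption, and Corollary~\ref{cor:inherited-properties-of-extrapolation-space}(c) then transfers normality to $X_{-1,+}$. Next I would invoke Proposition~\ref{prop:unit-ball-into-order-bounded-automatic}(a) with domain $U$ (which has a unit by hypothesis) and codomain $X_{-1}$, together with the positive operator $B$, to obtain elements $b_1,b_2\in X_{-1}$ with $B(B_U)\subseteq [b_1,b_2]$. The underlying mechanism, which the proposition packages, is that after renorming $U$ via $x\mapsto\max\{\norm{x},\norm{x}_e\}$ the unit $e$ becomes the largest element of the closed unit ball, so positivity of $B$ forces $B(B_U)\subseteq[-Be,Be]$.

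With the order-boundedness of $B(B_U)$ in hand, I would simply apply Theorem~\ref{thm:zero-class-C-admissible} to conclude that $B$ is zero-class $\mathrm{C}$-admissible. There is no genuine obstacle: all the real work — extracting $\mathrm{C}$-admissibility from an order interval containing $B(B_U)$, and the zero-class upgrade via $\norm{T_{-1}(\tau)b_i - b_i}_{-1}\to 0$ as $\tau\downarrow 0$ — was already executed in the proof of Theorem~\ref{thm:zero-class-C-admissible} combined with Remark~\ref{rem:zero-class-order-bounded}. The present corollary is therefore an immediate specialisation in which the hypothesis on $U$ (existence of a unit) supersedes the explicit order-bounded image requirement.
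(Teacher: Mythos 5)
Your proposal is correct and follows exactly the paper's route: the corollary is obtained by combining Proposition~\ref{prop:unit-ball-into-order-bounded-automatic}(a) (to get order-boundedness of $B(B_U)$ in $X_{-1}$ from positivity of $B$ and the unit in $U$) with Theorem~\ref{thm:zero-class-C-admissible}. The additional detail you supply about the unit dominating the renormed unit ball is a valid unpacking of the proposition's mechanism and does not change the argument.
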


Proposition~\ref{prop:unit-ball-into-order-bounded-automatic}(b) gives another situation where $B$ satisfies the assumptions of Theorem~\ref{thm:zero-class-C-admissible}. However, such a condition even yields $L^\infty$-admissibility (Corollary~\ref{cor:zero-class-order-bounded-automatic}).

\begin{remark}
    \label{rem:kato-example}
    The assumption that $U$ has a unit cannot be dropped in Corollary~\ref{cor:zero-class-C-admissible}. Indeed, on the Banach lattice $c_0$, there is an example \cite[Example~2.3]{JacobSchwenningerWintermayr2022}, going back to Kato, of a positive semigroup whose generator is positive, $\mathrm C$-admissible, yet it is not zero-class $\mathrm C$-admissible. 
\end{remark}

Recall from Table~\ref{table:properties} that an AM-space is a Banach lattice whose open unit ball is upwards directed. In particular, $c_0$ is an AM-space. It was shown in \cite[Theorem~4.1.19]{Wintermayr2019} that if $U$ is an AM-space, then every positive $B\in\calL(U, X_{-1})$ is at least $\mathrm C$-admissible. We generalise this in the following theorem.
    
\begin{theorem}
    \label{thm:admissibility-control-upwards-directed}
    Suppose that Assumption~\ref{ass:generalB} holds and $U$ is an ordered Banach space whose open unit ball is upwards directed.
    If $B\in \calL(U, X_{-1})$ is positive, then it is $\mathrm C$-admissible. 
\end{theorem}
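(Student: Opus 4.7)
My plan is to reduce to a uniform estimate on step functions via Proposition~\ref{prop:regulated-admissibility-equivalence}, and to exploit the upwards-directed structure of $B_U$ to sandwich each step-function input between two constant inputs.

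First I would invoke Proposition~\ref{prop:regulated-admissibility-equivalence}, so that it suffices to prove $\Reg$-admissibility, which in turn reduces by density to a uniform bound on step functions $u\in\mathrm{T}([0,\tau],U)$ with $\norm{u}_{\infty}\le 1$. For such $u$, \cite[Lemma~4.3(ii)]{BatkaiJacobVoigtWintermayr2018} already guarantees $\int_0^{\tau}T_{-1}(\tau-s)Bu(s)\dx s\in X$, so the content is just the norm estimate. Writing $u$ as the finite-valued function with values $u_1,\ldots,u_n\in B_U$, the symmetry of $B_U$ together with its upwards-directedness (iterated to finite subsets) yields elements $u^+,v\in B_U$ with $u_i\le u^+$ and $-u_i\le v$ for all $i$. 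Hence $-v\le u(s)\le u^+$ pointwise on $[0,\tau]$.

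Positivity of $B$ and of $(T_{-1}(t))_{t\ge 0}$, combined with the closedness of $X_{-1,+}$ (which lets the order pass through Riemann sums), then gives the sandwich
\[
\int_0^{\tau}T_{-1}(\tau-s)B(-v)\dx s
\;\le\; \int_0^{\tau}T_{-1}(\tau-s)Bu(s)\dx s
\;\le\; \int_0^{\tau}T_{-1}(\tau-s)Bu^+\dx s
\]
in $X_{-1}$. All three integrals lie in $X$ -- for the outer two, apply the same step-function result to the constant inputs $-v$ and $u^+$ -- and by the bipositivity of the canonical embedding $X\hookrightarrow X_{-1}$ (Proposition~\ref{prop:order:extrapolation}(b)) the sandwich is valid in $X$ as well.

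To conclude I would observe that the constant-input map $S_{\tau}:U\to X$, $w\mapsto\int_0^{\tau}T_{-1}(\tau-s)Bw\dx s$, is well-defined (constants are step functions) and bounded from $U$ to $X$ by the closed graph theorem, since it is already bounded as a map into $X_{-1}$. With $K_{\tau}:=\norm{S_{\tau}}$ and a normality constant $c>0$ from~\eqref{eq:normality-equivalent} applied to $X_+$, the sandwich gives
\[
\norm{\int_0^{\tau}T_{-1}(\tau-s)Bu(s)\dx s}_X \;\le\; cK_{\tau}\max\{\norm{u^+},\norm{v}\}\;\le\; cK_{\tau}.
\]
This is the desired uniform bound, and $\mathrm{C}$-admissibility then follows by density and Proposition~\ref{prop:regulated-admissibility-equivalence}. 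The main subtlety is producing $K_{\tau}$ without circularly invoking admissibility; this is resolved by the fact that constant functions are trivially step functions, so $\Ima S_{\tau}\subseteq X$ is built in and the closed graph theorem finishes the job.
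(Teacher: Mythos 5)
Your proof is correct and follows the same overall strategy as the paper's: reduce to a uniform estimate on step functions via Proposition~\ref{prop:regulated-admissibility-equivalence} and the fact that the input map sends step functions into $X$, dominate the finitely many values of $u$ by common upper and lower bounds in $U$, sandwich the integral using positivity of $B$ and of the extrapolated semigroup, and convert the sandwich into a norm bound via normality~\eqref{eq:normality-equivalent}. The one genuine difference is how the common bounds are produced. You iterate the upwards-directedness of $B_U$ directly (together with the symmetry $-B_U=B_U$) to obtain $u^+,v\in B_U$ dominating all $u_k$ and $-u_k$; this is elementary and, after rescaling, even gives bounds of norm at most $\norm{u}_\infty$. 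The paper instead passes to the dual --- upwards-directedness of $B_U$ is equivalent to additivity of the norm of $U'$ on the dual cone by \cite[Lemma~3]{Ng1969} --- and invokes \cite[Theorem~1.3(2)]{Messerschmidt2015} to produce $w_1,w_2$ with $\norm{w_i}\le\alpha\norm{u}_\infty$ for a uniform $\alpha>1$. Your route is more self-contained; the paper's makes the duality with Proposition~\ref{prop:admissibility-observation-additive-norm} explicit, which is the point of the remark following the theorem. A further cosmetic difference: you bound the two sandwiching integrals by $K_\tau=\norm{S_\tau}$ obtained from the closed graph theorem, whereas the paper normalises $\gbd(A)<0$ and estimates $\int_0^\tau T_{-1}(\tau-s)Bw_i\dx s$ by $\norm{A_{-1}^{-1}B}\,\norm{w_i}$ directly; both are valid. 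One small point worth making explicit: if the intervals $I_k$ do not cover $[0,\tau]$, include $0$ among the values to be dominated, so that the pointwise sandwich $-v\le u(s)\le u^+$ holds on all of $[0,\tau]$.
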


\begin{proof}
    Without loss of generality, we assume that $\gbd(A)<0$.
    Let $\Phi_{\tau}$ be the input operator defined in~\eqref{eq:input-operator} with $Z=\Reg$. As in the proof of Theorem~\ref{thm:zero-class-C-admissible}, $\Phi_{\tau}$ maps the space of step functions $\mathrm{T}([0,\tau], U)$  into $X$.

    Let $u\in\mathrm{T}([0,\tau], U)$. Then we can write $u= \sum_{k=1}^n u_k \chi_{I_k}$ for $u_k\in U$ and disjoint intervals $I_k \subseteq [0,\tau]$. Since the inerior of $B_U$ is upwards directed, the norm on $U'$ is additive on the positive cone $(U')_+$ by \cite[Lemma~3]{Ng1969}. Whence, according to \cite[Theorem~1.3(2)]{Messerschmidt2015}, there exist $\alpha > 1$ -- independent of $u$ -- and $ w_1,w_2\in U$ such that $-u_k\le w_1$ and $u_k\le w_2$ for each $k$ and $\norm{w_i}\le \alpha \norm{u}_\infty$. In particular, $-w_1\le u(s)\le w_2$ for all $s \in [0,\tau]$.
    
    Due to normality of the cone, we can choose $c>0$ such that~\eqref{eq:normality-equivalent} holds. 
    Positivity of the semigroup, the operator $B$, and the elements $w_1,w_2\in U_+$ now allow us to estimate $\norm{\int_0^{\tau} T_{-1}(\tau-s) Bu(s)\dx s}$ from above by
    \[
        c\max_{i=1,2}\norm{\int_0^{\infty} T_{-1}(\tau-s)Bw_i\dx s} \le c\norm{A_{-1}^{-1}B}\max_{i=1,2} \norm{w_i} \le c\ \alpha \norm{A_{-1}^{-1}B} \norm{u}_\infty.
    \]
    It follows that $B$ is $\Reg$-admissible by \cite[Remark~4.1.7]{Wintermayr2019}. The $\mathrm C$-admissibility is therefore true due to Proposition~\ref{prop:regulated-admissibility-equivalence}.
\end{proof}

Since the open unit ball is upwards directed if and only if the dual norm is additive on the positive cone \cite[Proposition~7 and Theorem~8]{TzschichholtzWeber2005}, Proposition~\ref{prop:admissibility-observation-additive-norm} and Theorem~\ref{thm:admissibility-control-upwards-directed} are dual to each other.

\begin{theorem}
   \label{thm:zero-class-control-order-bounded-automatic} 
    Suppose that Assumption~\ref{ass:generalB} holds and assume that $X_+$ is a face of $X''_+$. 
    If $B$ maps the unit ball of $U$ into an order-bounded subset of $X_{-1}$, then 
    $B$ is zero-class $L^{\infty}$-admissible. 
\end{theorem}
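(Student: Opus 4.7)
My plan is to reduce the statement to Theorem~\ref{thm:zero-class-control-order-bounded}(a), which only requires upgrading the order-boundedness of $B(B_U)$ to full $L^\infty$-admissibility. The face hypothesis is exactly the ingredient that is missing in Theorem~\ref{thm:zero-class-C-admissible}, and under the given assumptions Corollary~\ref{cor:face-in-extrapolation-space} immediately yields that $X_+$ is a face of $X_{-1,+}$.

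Fix $b_1 \le b_2$ in $X_{-1}$ with $B(B_U) \subseteq [b_1,b_2]$. For $\tau > 0$ and $u \in L^\infty([0,\tau],U)$ with $\norm{u}_\infty \le 1$, introduce the Bochner integrals
\begin{equation*}
    w := \int_0^{\tau} T_{-1}(\tau-s) Bu(s) \dx s, \qquad v_i := \int_0^{\tau} T_{-1}(\tau-s) b_i \dx s \quad (i=1,2),
\end{equation*}
a priori elements of $X_{-1}$. Standard semigroup theory gives $A_{-1} v_i = T_{-1}(\tau) b_i - b_i$, so each $v_i$ lies in $\dom A_{-1} = X$. The heart of the argument is to establish the sandwich $v_1 \le w \le v_2$ in $X_{-1}$. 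Granted this, we have $0 \le w - v_1 \le v_2 - v_1$ in $X_{-1}$ with $v_2 - v_1 \in X \cap X_{-1,+} = X_+$ by Proposition~\ref{prop:order:extrapolation}(b); the face property of $X_+$ in $X_{-1,+}$ then forces $w - v_1 \in X_+$, and hence $w \in X$. This proves $L^\infty$-admissibility of $B$, after which Theorem~\ref{thm:zero-class-control-order-bounded}(a) delivers the zero-class conclusion.

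To obtain the sandwich, I would test against positive functionals on $X_{-1}$: for each $\varphi \in (X_{-1})'_+$, positivity of $T_{-1}(\tau-s)$ combined with $b_1 \le Bu(s) \le b_2$ for almost every $s$ gives
\begin{equation*}
    \varphi(T_{-1}(\tau-s) b_1) \le \varphi(T_{-1}(\tau-s) Bu(s)) \le \varphi(T_{-1}(\tau-s) b_2),
\end{equation*}
and integration over $[0,\tau]$ yields $\varphi(v_1) \le \varphi(w) \le \varphi(v_2)$. The main subtlety I anticipate is the final lift from these dual inequalities to an intrinsic order relation in $X_{-1}$: because $X_{-1,+}$ need not be generating (see the discussion after Proposition~\ref{prop:order:extrapolation}), several standard duality shortcuts for ordered Banach spaces are not directly available. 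Nevertheless, Hahn-Banach separation is still applicable since $X_{-1,+}$ is closed and convex, and yields the identity $X_{-1,+} = \{x \in X_{-1} : \varphi(x) \ge 0 \text{ for all } \varphi \in (X_{-1})'_+\}$, which is all that is needed.
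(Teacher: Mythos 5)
Your proposal is correct and follows essentially the same route as the paper's proof: reduce to $L^\infty$-admissibility via Theorem~\ref{thm:zero-class-control-order-bounded}(a), sandwich $\Phi_\tau u$ between the two integrals $\int_0^\tau T_{-1}(\tau-s)b_i\dx s \in X$, and invoke Corollary~\ref{cor:face-in-extrapolation-space} to conclude $\Phi_\tau u \in X$. The only difference is that you spell out the justification of the sandwich inequality via positive functionals and Hahn--Banach separation, a step the paper states without further comment.
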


\begin{proof}
    By Theorem~\ref{thm:zero-class-control-order-bounded}, it suffices to show that $B$ is $L^{\infty}$-admissible.
    By assumption, there exists $a,b\in X_{-1}$ such that $B(B_U) \subseteq [a,b]$.
    First of all, note that
    $\int_{0}^{\tau} T_{-1}(\tau-s)a\dx s,\int_{0}^{\tau} T_{-1}(\tau-s)b\dx s\in \dom A_{-1}=X$.
    Now for each $u\in L^{\infty}([0,\tau],U)$ with $\norm{u}_{\infty}\le 1$, we have
    \[
       \int_{0}^{\tau} T_{-1}(\tau-s)a\dx s\le \Phi_{\tau} u \le \int_{0}^{\tau} T_{-1}(\tau-s)b\dx s.
    \]
    Because $X_+$ is a face of $X''_+$, also $X_+$ is a face of $X_{-1,+}$ by Corollary~\ref{cor:face-in-extrapolation-space}. It follows that $\Phi_{\tau} u\in X$. A rescaling argument yields that $\Phi_{\tau} u\in X$ for all $u\in L^{\infty}([0,\tau],U)$.
\end{proof}

Assumptions analogous to Theorem~\ref{thm:zero-class-control-order-bounded-automatic} were recently used to provide a duality result in \cite[Theorem~4.4]{AroraSchwenninger2024}.
From Theorem~\ref{thm:zero-class-control-order-bounded-automatic} and Proposition~\ref{prop:unit-ball-into-order-bounded-automatic} we get that zero-class $L^\infty$-admissibility is automatic in the following situations.

 \begin{corollary} 
    \label{cor:zero-class-order-bounded-automatic}
    Let Assumption~\ref{ass:generalB} hold, assume that $U$ is an ordered Banach space, and let $B \in \calL(U, X_{-1})$ be positive.
    Each of the following assumptions is sufficient for $B$ to be zero-class $L^{\infty}$-admissible.
    \begin{enumerate}[\upshape (a)]
        \item The space $U$ has a unit and $X_+$ is a face of $X''_+$.
            
        \item The open unit ball of $U$ is upwards directed and $X$ is a KB-space.
    \end{enumerate}
\end{corollary}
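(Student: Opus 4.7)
The plan is to reduce both parts to Theorem~\ref{thm:zero-class-control-order-bounded-automatic}, which requires verifying its two hypotheses: that $B$ maps $B_U$ into an order-bounded subset of $X_{-1}$ and that $X_+$ is a face of $X''_+$. Each of the conditions (a) and (b) will supply these two ingredients in different ways.

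For part~(a), the face property on $X_+$ is assumed outright, so only the order-boundedness of $B(B_U)$ requires comment. Since $U$ has a unit $e$, after the renorming discussed after equation~\eqref{eq:principal-ideal}, $e$ is the largest element of $B_U$, so $B_U \subseteq [-e,e]$. Positivity of $B$ is order-preserving, hence $B(B_U) \subseteq [-Be,Be] \subseteq X_{-1}$, which is order-bounded. Theorem~\ref{thm:zero-class-control-order-bounded-automatic} then applies and delivers zero-class $L^\infty$-admissibility.

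For part~(b), I first address the face property. Since $X$ is a KB-space, every positive, increasing, norm-bounded net in $X$ is norm-convergent. This is the standard characterization of order continuity of the norm, which in turn (cf.~Table~\ref{table:properties}) yields that $X_+$ is a face of $X''_+$. The order-boundedness of $B(B_U)$ is precisely the content of Proposition~\ref{prop:unit-ball-into-order-bounded-automatic}(b), which is available to us from the appendix: upwards-directedness of $B_U$ combined with the KB-space property and positivity of $B$ guarantees exactly this. Once both hypotheses are in hand, Theorem~\ref{thm:zero-class-control-order-bounded-automatic} again gives the conclusion.

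The only genuine work is hidden in Proposition~\ref{prop:unit-ball-into-order-bounded-automatic}, which is treated separately in Appendix~\ref{sec:factorisation}; at the level of the corollary itself, both parts amount to two short sentences each assembling the right hypotheses and citing Theorem~\ref{thm:zero-class-control-order-bounded-automatic}.
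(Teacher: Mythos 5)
Your overall strategy is exactly the paper's: reduce both parts to Theorem~\ref{thm:zero-class-control-order-bounded-automatic} by verifying the face property of $X_+$ in $X''_+$ and the order-boundedness of $B(B_U)$ in $X_{-1}$. Part~(a) is fine: your inline argument that the renormed unit ball of $U$ sits in $[-e,e]$ and hence $B(B_U)\subseteq[-Be,Be]$ is just an unpacking of Proposition~\ref{prop:unit-ball-into-order-bounded-automatic}(a), which is what the paper cites.

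In part~(b), however, there is a gap in how you invoke Proposition~\ref{prop:unit-ball-into-order-bounded-automatic}(b). That proposition requires that every increasing norm-bounded net in $Z_+$ be norm-convergent, where $Z$ is the \emph{codomain} of the positive operator. Here the codomain is $X_{-1}$, not $X$, so the hypothesis you must verify is that increasing norm-bounded nets in $X_{-1,+}$ converge -- and this is not the KB-property of $X$ that you appeal to. The missing link is Corollary~\ref{cor:inherited-properties-of-extrapolation-space}(a) (resting on Theorem~\ref{thm:inherited-properties-of-extrapolation-space}(a) applied to the positive bijection $\Res(\lambda,A_{-1}):X_{-1}\to X$), which transfers the property from $X$ to $X_{-1}$; the paper's proof cites it explicitly at this point. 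Without that transfer your application of the proposition is unjustified. A second, more cosmetic, imprecision: the KB-property is strictly stronger than order continuity of the norm (think of $c_0$), so it is not a ``characterization'' of it; the implication you need (KB $\Rightarrow$ order continuous $\Rightarrow$ $X_+$ is a face of $X''_+$) does hold, and is what the paper draws from \cite[Theorem~7.1]{Wnuk1999}.
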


\begin{proof}
    In each case, $X_+$ is a face of $X''_+$ \cite[Theorem~7.1]{Wnuk1999}, so by Theorem~\ref{thm:zero-class-control-order-bounded-automatic}, we only need to show that $B$ maps  the unit ball of $U$ into an order-bounded subset of $X_{-1}$. 
    For (a), this is shown in Proposition~\ref{prop:unit-ball-into-order-bounded-automatic}(a).
    
    (b) Due to Corollary~\ref{cor:inherited-properties-of-extrapolation-space}(a),  every increasing norm-bounded net in $X_{-1, +}$ is norm-convergent. The claim now follows by Proposition~\ref{prop:unit-ball-into-order-bounded-automatic}(b).
\end{proof}

Here again, the example \cite[Example~2.3]{JacobSchwenningerWintermayr2022} by Kato mentioned in Remark~\ref{rem:kato-example} shows that the assumption that $U$ has a unit cannot be dropped in Corollary~\ref{cor:zero-class-order-bounded-automatic}(a).

\subsection{Input admissibility for general semigroups}

In this section, we leave the setting of Assumption~\ref{ass:generalB} and instead impose order assumptions on the input space. This allows us to give conditions under which admissibility of the control operator automatically gives zero-class $L^\infty$-admissibility. 
First, we obtain the following analogue to Theorem~\ref{thm:zero-class-observation-sufficient}:

\begin{theorem}
    Let $(T(t))_{t\ge 0}$ be a $C_0$-semigroup on a reflexive Banach space $X$ and let $U$ be an AM-space. 
    If $B \in \calL(U, X_{-1})$ is $L^\infty$-admissible, then it is also zero-class $L^\infty$-admissible.
\end{theorem}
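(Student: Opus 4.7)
The plan is to reduce to the observation-operator result Theorem~\ref{thm:zero-class-observation-sufficient} via Weiss-type duality. The key facts to combine are: if $X$ is reflexive then $X'$ is reflexive and the adjoint semigroup $(T(t)')_{t\ge 0}$ is a $C_{0}$-semigroup on $X'$ with generator $A'$; under the canonical identification $(X_{-1})' = (X')_{1} = \dom A'$, the dual operator $B': \dom A' \to U'$ acts as an observation operator for $(T(t)')_{t\ge 0}$; and if $U$ is an AM-space, then $U'$ is an AL-space (standard Banach-lattice duality, cf.\ \cite[Section~2.7]{Meyer-Nieberg1991}).

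The core step is to establish, for each $\tau>0$, the norm equality
\[
    \norm{\Phi_{\tau}}_{L^{\infty}([0,\tau],U)\to X}
    =
    \norm{\Psi_{\tau}^{B'}}_{X'\to L^{1}([0,\tau],U')},
\]
where $\Psi_{\tau}^{B'}x' = B'T(\tau-\argument)'x'$ is the output map associated with $B'$. For $u\in L^{\infty}([0,\tau],U)$ and $x'\in \dom A'$, the standard duality computation gives
\[
    \duality{\Phi_{\tau}u}{x'}
    =
    \int_{0}^{\tau}\duality{u(s)}{B'T(\tau-s)'x'}\dx s,
\]
since $s\mapsto B'T(\tau-s)'x'$ is continuous (hence Bochner measurable) on $[0,\tau]$. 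Taking the supremum over $\norm{u}_{\infty}\le 1$ on the right, together with measurable selection of near-maximisers for $\duality{u(s)}{B'T(\tau-s)'x'}$, yields $\int_{0}^{\tau}\norm{B'T(\tau-s)'x'}_{U'}\dx s$, which equals $\norm{\Psi_{\tau}^{B'}x'}_{L^{1}}$. Taking the supremum over $\norm{x'}\le 1$ on the left and using reflexivity of $X$ together with density of $\dom A'$ in $X'$ gives the displayed equality.

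Granting this equality, the proof concludes quickly. From $L^{\infty}$-admissibility of $B$, the left-hand side is finite for each $\tau>0$, so $B'$ is $L^{1}$-admissible as an observation operator for $(T(t)')_{t\ge 0}$ on $X'$. Since $X'$ is reflexive and $U'$ is an AL-space, Theorem~\ref{thm:zero-class-observation-sufficient} applies and shows that $B'$ is in fact zero-class $L^{1}$-admissible, i.e.\ $\norm{\Psi_{\tau}^{B'}}\to 0$ as $\tau\downarrow 0$. Feeding this back through the norm equality gives $\norm{\Phi_{\tau}}\to 0$ as $\tau\downarrow 0$, which is exactly zero-class $L^{\infty}$-admissibility of $B$.

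The main obstacle is the verification of the norm equality in a Banach (rather than Hilbert) setting, where $L^{\infty}([0,\tau],U)^{*}$ is not $L^{1}([0,\tau],U')$; this is handled by working directly with the integral pairing, exploiting continuity of the orbit $s\mapsto B'T(\tau-s)'x'$ on $\dom A'$, density of $\dom A'$ in $X'$ (granted by reflexivity), and a standard measurable-selection argument to convert the $L^{\infty}$-supremum into an $L^{1}$-norm. Once this is in place, the proof is essentially the observation-operator statement transported through duality.
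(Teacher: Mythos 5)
Your proposal is correct and follows essentially the same route as the paper: dualise to view $B'$ as an observation operator for the adjoint semigroup on the reflexive space $X'$ with values in the AL-space $U'$, apply Theorem~\ref{thm:zero-class-observation-sufficient}, and transport the zero-class property back. The only difference is that you unpack the duality between $\norm{\Phi_\tau}$ and $\norm{\Psi_\tau^{B'}}$ by hand, whereas the paper simply cites Weiss's duality theorem \cite[Theorem~6.9(ii),(iii)]{Weiss1989a} for both directions; your verification of that norm identity (via the integral pairing, density of $\dom A'$ in $X'$, and step-function near-maximisers) is sound.
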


\begin{proof}
    Since $B$ be $L^\infty$-admissible, $B'$ is $L^1$-admissible \cite[Theorem~6.9(iii)]{Weiss1989a}. Since $U$ is an AM-space, we have that $U'$ is an AL-space. Moreover, as $X$ is reflexive, so is $X'$. Thus, Theorem~\ref{thm:zero-class-observation-sufficient} implies that $B'$ is zero-class $L^1$-admissible.  Keeping in mind that $X$ is reflexive, the result follows by \cite[Theorem~6.9(ii)]{Weiss1989a}.
\end{proof}

In fact, if $U$ is finite-dimensional, then even $\mathrm C$-admissibility of the control operator implies that it is zero-class $L^{\infty}$-admissibility:

\begin{theorem}
    \label{thm:zero-class-control-finite-dimensions}
    Let $(T(t))_{t\ge 0}$ be a $C_0$-semigroup on a reflexive Banach space $X$ and let $U$ be finite-dimensional. 
    If $B \in \calL(U, X_{-1})$ is $\mathrm C$-admissible, then it is also zero-class $L^\infty$-admissible.
\end{theorem}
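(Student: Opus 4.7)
The plan is to reduce to the preceding theorem --- which establishes the analogous conclusion for AM-space-valued inputs under $L^\infty$-admissibility --- by first upgrading the hypothesis of $\mathrm C$-admissibility to $L^\infty$-admissibility.

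I would begin by noting that, since $U$ is finite-dimensional, one can equip it with an equivalent AM-space norm (for instance, the $\ell^\infty$-norm with respect to any basis), and all admissibility properties are preserved under equivalent renormings of $U$. Consequently the preceding theorem delivers the desired conclusion as soon as $L^\infty$-admissibility of $B$ has been verified.

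To prove $L^\infty$-admissibility, fix $\tau > 0$ and $f \in L^\infty([0,\tau], U)$. Because $U$ is finite-dimensional, $L^\infty([0,\tau], U)$ is the dual of the separable space $L^1([0,\tau], U')$, and a standard mollification (extend $f$ to $\bbR$, convolve with a mollifier, and restrict) yields a sequence $(f_n) \subseteq C([0,\tau], U)$ with $\|f_n\|_\infty \le \|f\|_\infty$ converging to $f$ in the weak-$*$ topology. For every $y' \in (X_{-1})' = (X')_1$, the map $s \mapsto B'T_{-1}(\tau-s)'y'$ is continuous from $[0,\tau]$ into $U'$ --- since the restriction of the dual semigroup to $(X')_1$ is strongly continuous --- and therefore lies in $L^1([0,\tau], U')$. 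The weak-$*$ convergence of $(f_n)$ thus gives
\[
    \langle \Phi_\tau f_n, y'\rangle
    = \int_0^\tau \langle f_n(s), B'T_{-1}(\tau-s)'y'\rangle \dx s
    \longrightarrow \langle \Phi_\tau f, y'\rangle,
\]
so $\Phi_\tau f_n \to \Phi_\tau f$ weakly in $X_{-1}$. On the other hand, by $\mathrm C$-admissibility, $\Phi_\tau f_n \in X$ with $\|\Phi_\tau f_n\|_X \le \|\Phi_\tau\|_{C \to X} \|f\|_\infty$; reflexivity of $X$ then gives a subsequence converging weakly in $X$ to some $z$. Since the inclusion $X \hookrightarrow X_{-1}$ is (weakly) continuous, $z = \Phi_\tau f$, so $\Phi_\tau f \in X$.

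The principal technical point is the weak approximation step: the approximating sequence $(f_n)$ must be uniformly bounded in $L^\infty$-norm (so that reflexivity of $X$ can be invoked) and must converge weak-$*$ against the continuous integrand $s \mapsto B'T_{-1}(\tau-s)'y'$. Both features hinge on the finite-dimensionality of $U$. Once $L^\infty$-admissibility is established, the zero-class property follows at once from the preceding theorem.
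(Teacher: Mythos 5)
Your proof is correct, but it takes a genuinely different route from the paper for the crucial step. The paper works entirely on the dual side: Lemma~\ref{lem:dual-continuous} shows that $\mathrm C$-admissibility of $B$ yields $L^1$-admissibility of the observation operator $B' \in \calL((X')_1,U')$ --- by computing the $L^1([0,\tau],U')$-norm of $B'T'(\argument)x'$ as a supremum over continuous test functions, which is where finite-dimensionality of $U$ enters --- and then concludes via Theorem~\ref{thm:zero-class-observation-sufficient} and Weiss duality. You instead upgrade $\mathrm C$-admissibility to $L^\infty$-admissibility on the primal side, by mollifying $u \in L^\infty([0,\tau],U)$ to a uniformly bounded sequence of continuous inputs converging weak${}^*$, observing that the integrand $s \mapsto B'T_{-1}(\tau-s)'y'$ is continuous into $U'$ for $y' \in (X_{-1})' = (X')_1$ (reflexivity is needed for this identification), and extracting a weak limit of $(\Phi_\tau f_n)$ in the reflexive space $X$; the zero-class conclusion is then delegated to the preceding AM-space theorem after renorming $U$ as $\ell^\infty_n$. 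Both arguments hinge on the same two hypotheses in essentially the same places --- finite-dimensionality of $U$ to pass between continuous and $L^\infty$/$L^1$ inputs, reflexivity of $X$ for the duality of interpolation and extrapolation spaces and for the Weiss duality theorem --- but your version has the merit of isolating the implication ``$\mathrm C$-admissible $\Rightarrow$ $L^\infty$-admissible'' as an explicit primal statement with a norm bound $\norm{\Phi_\tau}_{L^\infty \to X} \le \norm{\Phi_\tau}_{C \to X}$, whereas the paper's detour through $L^1$-admissibility of $B'$ obtains the same information only implicitly.
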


The proof uses the following duality lemma in the spirit of Weiss \cite[Theorem~6.9]{Weiss1989a}:

\begin{lemma}
    \label{lem:dual-continuous}
    Let $(T(t))_{t\ge 0}$ be a $C_0$-semigroup on a reflexive Banach space $X$ and let $U$ be finite-dimensional.
    If $B\in \calL(U, X_{-1})$ is a $\mathrm C$-admissible control operator,  then $B' \in \calL((X')_1, U)$ is an $L^1$-admissible observation operator.
\end{lemma}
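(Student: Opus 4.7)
\smallskip

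\noindent\textbf{Proof plan.} The plan is to establish the lemma by a duality pairing: test $\Phi_\tau u$ against an element $x_0'\in (X')_1$, use $\mathrm C$-admissibility to bound the pairing by $K_\tau\|x_0'\|\|u\|_\infty$, and then recognize the resulting estimate as an $L^1$-bound on $B'T'(\cdot)x_0'$ by taking the supremum over $u\in C([0,\tau],U)$. The finite-dimensionality of $U$ ensures that this supremum is exactly the $L^1$-norm.

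\smallskip

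\noindent\textit{Step 1 (Continuity of the integrand).} Since $X$ is reflexive, the adjoint semigroup $(T'(t))_{t\ge 0}$ on $X'$ is a $C_0$-semigroup with generator $A'$, and it restricts to a $C_0$-semigroup on $(X')_1=\dom A'$ equipped with the graph norm. Hence, for every $x_0'\in (X')_1$ the map $t\mapsto T'(t)x_0'$ is continuous into $(X')_1$. Using the identification $(X_{-1})'=(X')_1$ and the fact that $T_{-1}(t)'$ coincides with $T'(t)\restrict{(X')_1}$, the function
\[
   g\colon [0,\tau]\to U',\qquad g(s):=B'T'(\tau-s)x_0'
\]
is continuous, in particular Bochner-integrable.

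\smallskip

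\noindent\textit{Step 2 (Duality identity).} For $u\in C([0,\tau],U)$, the $\mathrm C$-admissibility assumption gives $\Phi_\tau u\in X$ with $\|\Phi_\tau u\|\le K_\tau\|u\|_\infty$. Since the Bochner integral defining $\Phi_\tau u$ lies in $X_{-1}$ a priori, the pairing with $x_0'\in (X')_1=(X_{-1})'$ may be interchanged with the integral, yielding
\[
  \langle x_0',\Phi_\tau u\rangle
  =\int_0^\tau \langle T_{-1}(\tau-s)'x_0',Bu(s)\rangle\,\mathrm ds
  =\int_0^\tau \langle g(s),u(s)\rangle_{U',U}\,\mathrm ds.
\]
Combining with the admissibility bound,
\[
  \left|\int_0^\tau \langle g(s),u(s)\rangle\,\mathrm ds\right|
  \le K_\tau\|x_0'\|_{X'}\|u\|_\infty \qquad (u\in C([0,\tau],U)).
\]

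\smallskip

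\noindent\textit{Step 3 (Computing the supremum).} The essential remaining point is the equality
\[
   \sup\Bigl\{\int_0^\tau\langle g(s),u(s)\rangle\,\mathrm ds:\ u\in C([0,\tau],U),\ \|u\|_\infty\le 1\Bigr\}
   =\int_0^\tau \|g(s)\|_{U'}\,\mathrm ds,
\]
valid for any continuous $g\colon[0,\tau]\to U'$. The direction ``$\le$'' is obvious. For ``$\ge$'', given $\varepsilon>0$, use continuity of $g$ and the definition of the dual norm to choose, at each $s\in[0,\tau]$, a unit vector $v_s\in U$ with $\langle g(s),v_s\rangle\ge \|g(s)\|_{U'}-\varepsilon$; then by continuity of $g$ this inequality persists in a small open neighbourhood of $s$. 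Finite-dimensionality of $U$ makes pasting easy: extract a finite subcover and use a continuous partition of unity to construct $u\in C([0,\tau],U)$ with $\|u\|_\infty\le 1$ and $\langle g(s),u(s)\rangle\ge \|g(s)\|_{U'}-2\varepsilon$ for all $s$. Integrating and letting $\varepsilon\to 0$ gives the claim. This is the step that genuinely needs $U$ to be finite-dimensional (so that $v_s\in U$ witnessing the dual norm really exist).

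\smallskip

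\noindent\textit{Step 4 (Conclusion).} Combining Steps 2 and 3 and substituting $s\leftrightarrow \tau-s$ yields
\[
   \int_0^\tau \|B'T'(s)x_0'\|_{U'}\,\mathrm ds\le K_\tau\|x_0'\|_{X'},
\]
which is precisely the $L^1$-admissibility of $B'$ as an observation operator for the semigroup $(T'(t))_{t\ge 0}$.

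\smallskip

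\noindent\textit{Main obstacle.} The only non-trivial point is the sup-equals-$L^1$-norm identity in Step~3; the rest is routine duality, using the reflexivity of $X$ to make $(T'(t))_{t\ge 0}$ a $C_0$-semigroup and $(X_{-1})'=(X')_1$ to bring $B'$ into play.
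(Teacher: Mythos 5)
Your proposal is correct and follows essentially the same route as the paper: pair $\Phi_\tau u$ with $x_0'\in(X')_1=(X_{-1})'$, use $\mathrm C$-admissibility, and identify the supremum over continuous $u$ with $\norm{u}_\infty\le 1$ as the $L^1$-norm of $B'T'(\argument)x_0'$. The only difference is that you spell out (via a partition-of-unity argument) the sup-equals-$L^1$-norm identity that the paper compresses into the phrase ``finite-dimensionality of the input space allows us to compute''.
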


\begin{proof}
    Let $\tau>0$. Since $B$ is $\mathrm{C}$-admissible, there exists $K_{B,\tau}>0$ such that 
    $
        \norm{\int_{0}^{\tau} T_{-1}(t)Bu(t)\dx t  }\le K_{B,\tau} \norm{u}_{\infty}
    $
    for all $u\in \mathrm{C}([0,\tau],U)$.
    Finite-dimensionality of the input space allows us to compute
    \begin{align*}
        \norm{B'T'(\argument)x'}_1 & = \sup_{u \in \mathrm{C}([0,\tau],U), \norm{u}_{\infty}\le 1} \modulus{\int_0^{\tau} \duality{B'T'(t)x'}{u(t)} \dx t}\\
                                            & = \sup_{u \in \mathrm{C}([0,\tau],U), \norm{u}_{\infty}\le 1} \modulus{\int_0^{\tau} \duality{B'(T_{-1})'(t)x'}{u(t)} \dx t}\\
                                            & = \sup_{u \in \mathrm{C}([0,\tau],U), \norm{u}_{\infty}\le 1} \modulus{ \duality{x'}{\int_0^{\tau}T_{-1}(t)Bu(t)\dx t} }
                                             \le K_{B,\tau} \norm{x'}
    \end{align*}
    for all $x'\in \dom A'$. This proves that $B'$ is $L^1$-admissible.
\end{proof}

\begin{proof}[Proof of Theorem~\ref{thm:zero-class-control-finite-dimensions}]
    Since $B$ be $\mathrm C$-admissible, $B'$ is zero-class $L^1$-admissible by Lemma~\ref{lem:dual-continuous} and Theorem~\ref{thm:zero-class-observation-sufficient}. Once again, using the reflexivity of $X$, it follows that  $B$ is zero-class $L^\infty$-admissible by the Weiss duality result \cite[Theorem~6.9(ii)]{Weiss1989a}.
\end{proof}

\subsection{$L^r$-input admissibility of positive semigroups}
    \label{sec:L^r-admissibility}

We close this section by moving away from the limit-cases of $L^\infty$- and $\mathrm C$-admissibility and instead considering $L^r$-admissibility for $r < \infty$. 
If one has a sufficiently strong estimate on the resolvent one can even get $L^1$-admissibility of positive input operators \cite[Theorems~2.1]{Gantouh2022a}; compare however Appendix~\ref{appendix:al-characterisation}.
In the following corollary, we instead work with an intrinsic ultracontractivity type assumption on the semigroup. In Theorem~\ref{thm:ultracontractive}(b), this same condition was shown to imply that $\linSpan X_{-1,+}$ is contained in a Favard space. 

\begin{corollary}
    \label{cor:Lr-admissibility}
    Let $(T(t))_{t\ge 0}$ be a positive and analytic $C_0$-semigroup on a reflexive ordered Banach space $X$ whose cone $X_+$ is generating.
    Let $\varphi \in X'$ be strictly positive, assume that $\dom A' \cap (X')_\varphi$ is majorizing in $(X')_\varphi$, that $T(t)' X' \subseteq (X')_\varphi$ for each $t > 0$ and that the ultracontractivity type estimate $\norm{T(t)'}_{X' \to (X')_{\varphi}} \le \widetilde c t^{-\alpha}$ holds for some $\widetilde c \ge 0$ and $\alpha \in [0,1)$ and for all $t \in (0,1]$.

    If $U$ is an ordered Banach space and $B \in \calL(U, X_{-1})$ is positive (more generally, the difference of two positive operators), then $B$ is $L^r$-admissible for every $r > \frac{1}{1-\alpha}$.
\end{corollary}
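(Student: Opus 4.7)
The plan is to combine the pointwise operator estimate from Theorem~\ref{thm:ultracontractive}(a) with Hölder's inequality applied to the convolution defining $\Phi_\tau$. Specifically, Theorem~\ref{thm:ultracontractive}(a) together with the assumed ultracontractivity bound on $T(t)'$ yields a constant $C>0$ such that
\[
    \norm{T_{-1}(t)}_{\linSpan X_{-1,+} \to X} \le C t^{-\alpha} \qquad \text{for all } t \in (0,1].
\]
The bound extends to any interval $(0,\tau]$ (with a $\tau$-dependent constant) by using the factorisation $T(t)' = T(1)' T(t-1)'$ for $t > 1$ together with the uniform boundedness of $(T(s))_{s\in [0,\tau]}$, and then invoking Theorem~\ref{thm:ultracontractive}(a) once more.

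Since $L^r$-admissibility is preserved under linear combinations, it suffices to treat the case where $B$ is positive. The uniform decomposition property \cite[Theorem~2.37]{AliprantisTourky2007} (applied under the implicit assumption that $U_+$ is generating) yields, for each $u \in U$, a decomposition $u = u_1 - u_2$ with $u_i \in U_+$ and $\norm{u_i}_U \le c'\norm{u}_U$. Since $Bu_i \in X_{-1,+}$, one has $\norm{Bu_i}_{\linSpan X_{-1,+}} = \norm{Bu_i}_{-1} \le c'\norm{B}\cdot\norm{u}_U$, and thus $\norm{Bu}_{\linSpan X_{-1,+}} \le 2c'\norm{B}\norm{u}_U$. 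Combining this with the pointwise operator estimate gives
\[
    \norm{T_{-1}(t)Bu}_X \le M t^{-\alpha} \norm{u}_U \qquad (t\in (0,\tau],\; u\in U)
\]
for some constant $M>0$.

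For $u \in L^r([0,\tau],U)$, the map $s \mapsto T_{-1}(\tau-s)Bu(s)$ takes values in $X$ for a.e.\ $s \in [0,\tau)$ by analyticity of the semigroup, and Hölder's inequality with conjugate exponent $q = r/(r-1)$ yields
\begin{align*}
    \norm{\Phi_\tau u}_X
    &\le \int_0^\tau \norm{T_{-1}(\tau-s)Bu(s)}_X \dx s \\
    &\le M \left(\int_0^\tau (\tau-s)^{-\alpha q} \dx s\right)^{1/q} \norm{u}_{L^r([0,\tau],U)}.
\end{align*}
The kernel integral is finite precisely when $\alpha q < 1$, which is equivalent to $r > 1/(1-\alpha)$. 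Together with Bochner integrability in $X$ (which is automatic from the above absolute bound), this gives $\Phi_\tau u \in X$ and the required $L^r$-admissibility estimate.

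The main obstacle is transferring the pointwise positivity of $B$ into a genuine global operator bound on all of $U$: this hinges on the generating-cone assumption on $U$, since without a uniform decomposition one cannot conclude that $B$ maps $U$ continuously into $\linSpan X_{-1,+}$. Once that step is secured, the remainder is the standard singular-convolution computation familiar from the analytic-semigroup theory of $L^r$-admissibility, specialised here through the cone-restricted regularisation estimate of Theorem~\ref{thm:ultracontractive}(a).
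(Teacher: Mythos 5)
Your proposal is correct, and it is essentially the paper's argument with the middle step unpacked. The paper's proof is a three-liner: it invokes Theorem~\ref{thm:ultracontractive}(b) to conclude $\linSpan X_{-1,+} \subseteq F_{-\alpha}$, hence $B(U) \subseteq F_{-\alpha}$ and $B \in \calL(U, F_{-\alpha})$ by the closed graph theorem, and then cites an external result (\cite[Lemma 2.1 and Remark 2.3]{PreusslerSchwenninger2024}, \cite[Proposition 19]{Maragh2014}) for the implication ``Favard-class control operator $\Rightarrow$ $L^r$-admissible for $r > \frac{1}{1-\alpha}$''. You instead go back to Theorem~\ref{thm:ultracontractive}(a), extract the operator bound $\norm{T_{-1}(t)}_{\linSpan X_{-1,+} \to X} \le C t^{-\alpha}$, and carry out the singular-kernel H\"older estimate yourself; for an analytic semigroup this is precisely the content of the cited lemma, so the mathematical substance is identical, but your version is self-contained and makes the role of the exponent condition $\alpha q < 1 \Leftrightarrow r > \frac{1}{1-\alpha}$ explicit. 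Two small remarks. First, you are right to flag that one needs $U = U_+ - U_+$ (with the uniform decomposition constant from \cite[Theorem~2.37]{AliprantisTourky2007}) to pass from positivity of $B$ to a norm bound $U \to \linSpan X_{-1,+}$; note that the paper's own proof needs exactly the same implicit hypothesis to conclude $B(U) \subseteq F_{-\alpha}$, so this is not a defect of your argument relative to theirs. Second, your appeal to ``Bochner integrability is automatic from the absolute bound'' glosses over strong measurability of $s \mapsto T_{-1}(\tau-s)Bu(s)$ as an $X$-valued (rather than $X_{-1}$-valued) map; this is routine (it is handled in the references the paper cites), but worth a sentence if you want the argument fully airtight.
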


\begin{proof}
    By Theorem~\ref{thm:ultracontractive}(b), $B$ maps into the Favard space $F_{-\alpha}$, whence, by the closed graph theorem, $B\in\calL(U,F_{-\alpha})$. This readily implies that $B$ is $L^r$-admissible for $r > \frac{1}{1-\alpha}$, see e.g.\ \cite[Lemma 2.1 and Remark 2.3]{PreusslerSchwenninger2024} or \cite[Proposition 19]{Maragh2014}.
\end{proof}

\begin{example}
    Let $\Omega \subseteq \bbR^d$ be a bounded domain with Lipschitz boundary, let $p,q \in (1,\infty)$ be Hölder conjugates, and let $A$ be the elliptic operator with Neumann boundary conditions on $L^p(\Omega)$ from Example~\ref{exa:X-1-for-neumann}. 
    If $\alpha := \frac{d}{2q} < 1$, then it follows from~(a) in Example~\ref{exa:X-1-for-neumann} that $\linSpan L^p(\Omega)_{-1,+}$ is contained in the Favard space $F_{-\alpha}$. 
    
    Now let $U$ be an ordered Banach space and let $B \in \calL(U, X_{-1})$ be positive or, more generally, the difference of two positive operators. 
    Then $B$ is $L^r$-admissible for every $r \in (\frac{1}{1-\alpha},\infty]$ by Corollary~\ref{cor:Lr-admissibility}. 
    If $s \in [1,\infty)$ denotes the Hölder conjugate of $r$, this can be reformulated by saying that $B$ is $L^r$-admissible if $s < \frac{2q}{d} = \frac{1}{\alpha}$.

    We note that a fixed positive operator $B$ can be $L^r$-admissible for a larger range of $r$. 
    For instance, $B$ obtained from the Neumann boundary control is $L^r$-admissible for $r>4/3$, if the boundary of $\Omega$ is $C^2$, see e.g.\ \cite{Lasiecka} or \cite[Example~2.14]{Schwenninger2020}, and even for $r\ge4/3$ when the boundary is $C^{\infty}$ \cite[Proposition~2.4]{HaakKunstmann2008} and \cite[Remark 2.8]{PreusslerSchwenninger2024}.
\end{example}

\section{Applications to perturbation results}
    \label{sec:perturbations}

The consequences of admissibility for perturbation results are well-known. Indeed, if $(T(t))_{t \ge 0}$ is a $C_0$-semigroup on a Banach space $X$ and $B\in \calL(X, X_{-1})$ is zero-class $\mathrm C$-admissible, then the restriction of $A_{-1}+B$ to $X$ generates a $C_0$-semigroup on $X$ \cite[Corollary~III.3.3]{EngelNagel2000}. As a consequence, perturbation results for positive $C_0$-semigroups on AM-spaces were proved in \cite{BatkaiJacobVoigtWintermayr2018}. Likewise, zero-class $L^1$-admissibility of the observation operator also yields a perturbation result \cite[Corollary~III.3.16]{EngelNagel2000}. Using results obtained in the prequel, we are thus able to obtain perturbation results for positive $C_0$-semigroups.

\begin{corollary}
    \label{cor:pert-order-bdd}
    Let $X$ be an ordered Banach space with a generating and normal cone and let $A$ be the generator of a positive $C_0$-semigroup on $X$.
    
    If $B \in \calL(X,X_{-1})$ maps the unit ball of $X$ to an order-bounded set in $X_{-1}$,
    then the part of $A_{-1}+B$ in $X$ generates a $C_0$-semigroup on $X$. 
    If $B$ is positive, so is the perturbed semigroup.
\end{corollary}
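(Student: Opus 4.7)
The plan is to combine the zero-class $\mathrm{C}$-admissibility theorem established earlier with a standard bounded-perturbation result for $C_0$-semigroups, which is exactly the mechanism advertised at the start of Section~\ref{sec:perturbations}. First I would observe that Assumption~\ref{ass:generalB} is satisfied with $U := X$: the cone $X_+$ is generating and normal by hypothesis, $(T(t))_{t\ge 0}$ is positive, and $B \in \calL(X, X_{-1})$ is given. Since $B$ maps the unit ball of $X$ into an order-bounded subset of $X_{-1}$, Theorem~\ref{thm:zero-class-C-admissible} applies directly and yields that $B$ is zero-class $\mathrm{C}$-admissible.

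Next, I would invoke the standard perturbation theorem for zero-class $\mathrm{C}$-admissible control operators recalled at the beginning of this section, namely \cite[Corollary~III.3.3]{EngelNagel2000}, which states precisely that the part of $A_{-1}+B$ in $X$ then generates a $C_0$-semigroup $(S(t))_{t\ge 0}$ on $X$. This settles the generation statement without any further positivity input.

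For the positivity claim in the case $B \ge 0$, I would use the Dyson--Phillips representation
\[
    S(t) = \sum_{n=0}^{\infty} S_n(t), \qquad S_0(t) := T(t), \qquad S_{n+1}(t)x := \int_0^t T_{-1}(t-s) B\, S_n(s) x \dx s.
\]
Positivity then propagates inductively: $(T(t))_{t\ge 0}$ is positive by assumption, the extrapolated semigroup $(T_{-1}(t))_{t\ge 0}$ is positive on $X_{-1}$ with respect to the cone introduced in Section~\ref{sec:order-properties-corollaries}, and $B$ is positive; hence every $S_n(t)$ leaves $X_+$ invariant, and closedness of $X_+$ in $X$ transfers positivity to the norm-convergent limit $S(t)$. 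An equivalent route is via the resolvent identity $\Res(\lambda,(A_{-1}+B)|_X) = \Res(\lambda, A_{-1})\sum_{n=0}^{\infty}\bigl(B\Res(\lambda, A_{-1})\bigr)^n$ for sufficiently large $\lambda$, each term of which is positive by Proposition~\ref{prop:order:extrapolation}(c).

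The main obstacle I anticipate is not conceptual but bookkeeping: one has to verify that each iterated integral $S_{n+1}(t)x$ actually lands in $X$ rather than only in $X_{-1}$, and that the series converges in $\calL(X)$. Both points are guaranteed by zero-class $\mathrm{C}$-admissibility, which provides an estimate of the form $\norm{S_{n+1}(t)}_{\calL(X)} \le \norm{\Phi_t}_{\mathrm{C}\to X} \cdot \sup_{s\le t}\norm{S_n(s)}_{\calL(X)}$ with $\norm{\Phi_t}_{\mathrm{C}\to X}\to 0$ as $t\downarrow 0$. The resulting geometric decay on a small time interval, combined with the semigroup law, extends the construction to arbitrary $t > 0$.
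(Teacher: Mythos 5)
Your proposal is correct and follows essentially the same route as the paper: apply Theorem~\ref{thm:zero-class-C-admissible} to get zero-class $\mathrm{C}$-admissibility and then invoke \cite[Corollary~III.3.3]{EngelNagel2000} for generation. The paper leaves the positivity of the perturbed semigroup implicit, whereas you spell it out via the Dyson--Phillips series (or the Neumann series for the resolvent combined with Proposition~\ref{prop:order:extrapolation}(c)); both of your arguments are sound and are the standard way to justify that claim.
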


\begin{proof}
    The assumptions imply that $B$ is zero-class $\mathrm C$-admissible (Theorem~\ref{thm:zero-class-C-admissible}). 
    Hence, the claim follows from the perturbation result in \cite[Corollary~III.3.3]{EngelNagel2000}. 
\end{proof}

As a consequence of Corollary~\ref{cor:pert-order-bdd}, one can see at once that the operator $A_{-1}+B$ in \cite[Example~5.1]{BatkaiJacobVoigtWintermayr2018} generates a $C_0$-semigroup without verifying any spectral conditions and one can also allow multiplication operator with function in $L^1$ as perturbations:

\begin{example}
    On the space $X=\{ f\in C[0,1]:f(1)=0\}$ the operator $A$ given by
    \[
        \dom A :=\{ f \in C^1[0,1]:f(1)=f'(1)=0\}, \qquad
              Af := f' 
    \]
    generates a positive $C_0$-semigroup \cite[Example~II.3.19(i)]{EngelNagel2000}  that corresponds to
    the partial differential equation
    \begin{align*}
        & &  u_t(t,x) & = u_x(t,x)   & \text{for } & x\in [0,1], \; t \ge 0,  & & \\
        & &  u(0,x)   & = u_0(x)     & \text{for } & x \in [0,1],          & & \\
        & &  u(t,1)   & = 0          & \text{for } & t \ge 0.              & &
    \end{align*}
    Let $\mu$ be a finite continuous positive Borel measure on $(0,1)$ and let $m\in L^1(0,1)$. It can be shown using standard arguments that $\mu,m \in X_{-1}$ (see, for instance, \cite[Page~344]{BatkaiJacobVoigtWintermayr2018}). In particular, the rank-one operator  $f\mapsto B_1f:= \int_0^1 f(x)\dx x\cdot \mu$ and the multiplication operator $f\mapsto B_2f:= mf$ both lie in $\calL(X,X_{-1})$.
    In fact, $B_1$ and $B_2$ map the unit ball into the order-bounded subsets
    $[-\mu,\mu]$ and $\big[-\modulus{m},\modulus{m}\big]$ of $X_{-1}$ respectively.
    It follows by Corollary~\ref{cor:pert-order-bdd} that the part of $A_{-1}+B_1$ and $A_{-1}+B_2$ in $X$ generate $C_0$-semigroups on $X$.
\end{example}

Either by directly dualizing Corollary~\ref{cor:pert-order-bdd} or by using the results from Section~\ref{sec:admissibility-control} and then dualizing one also gets the following perturbation result:

\begin{corollary}
    \label{cor:pert-dual}
    Let $X$ be a reflexive ordered Banach space with a generating and normal cone and let $A$ be the generator of a positive $C_0$-semigroup on $X$.
    
    If $C\in \calL(X_1, X)$ is such that $C'$ maps the unit ball of $X'$ to an order-bounded set in $(X')_{-1}$, then  $A+C$ generates a $C_0$-semigroup on $X$. 
    If $C$ is positive, so is the perturbed semigroup.
\end{corollary}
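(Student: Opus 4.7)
The plan is to pass to the dual, apply the admissibility machinery of Section~\ref{sec:admissibility-control} to $C'$ viewed as a control operator, and then invoke Weiss duality to obtain an observation-operator perturbation result on $X$ itself.

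First I would check that the dual setup fits Assumption~\ref{ass:generalB}. Since $X$ is reflexive with generating and normal cone, so is $X'$ (by the dualities between generating and normal cones recalled in the introduction); $(T(t)')_{t\ge 0}$ is a positive $C_0$-semigroup on $X'$ with generator $A'$; and under the canonical identification $(X_1)'=(X')_{-1}$, the hypothesis on $C'$ says precisely that it maps the unit ball of $X'$ into an order-bounded subset of $(X')_{-1}$. Theorem~\ref{thm:zero-class-control-order-bounded}(b), applied with state space $X'$ and input space $U=X'$ (both reflexive), then gives that $C'$ is a zero-class $L^\infty$-admissible control operator for $(T(t)')_{t\ge 0}$.

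Next I would dualize this admissibility statement. The Weiss duality theorem \cite[Theorem~6.9(ii)]{Weiss1989a}, combined with the reflexivity of $X$ (which lets us identify $C''=C$), converts zero-class $L^\infty$-admissibility of $C'$ for $(T(t)')_{t\ge 0}$ into zero-class $L^1$-admissibility of $C$ as an observation operator for $(T(t))_{t\ge 0}$. The Miyadera–Voigt perturbation theorem \cite[Corollary~III.3.16]{EngelNagel2000} then produces a $C_0$-semigroup on $X$ whose generator is $A+C$ with its natural domain $X_1$. For positivity, when $C\ge 0$ the Dyson–Phillips representation of the perturbed semigroup expresses each iterate as an integral of compositions of $T(s)$ and $C$, both positive; hence each iterate, and therefore the sum, is positive.

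The main subtlety I anticipate concerns identifying the generator on the correct space. A direct dualization of Corollary~\ref{cor:pert-order-bdd} would yield a $C_0$-semigroup only on $X'$, generated by the part of $(A')_{-1}+C'$ in $X'$, and one would still have to recognise its pre-dual generator on $X$ as $A+C$ on $\dom A$. The admissibility/Miyadera–Voigt detour bypasses this entirely, because Miyadera–Voigt directly constructs the semigroup on $X$ with the expected generator, and preservation of positivity comes for free from the Dyson–Phillips series.
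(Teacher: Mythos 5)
Your argument is correct and follows essentially the same route as the paper: the paper proves exactly this reduction in Lemma~\ref{lem:zero-class-observation-dual}, showing that $C'$ is a zero-class $L^\infty$-admissible control operator for the dual semigroup on $X'$ and then invoking Weiss duality to get zero-class $L^1$-admissibility of $C$, after which \cite[Corollary~III.3.16]{EngelNagel2000} yields the generation statement. The only (immaterial) difference is that you obtain the $L^\infty$-admissibility of $C'$ from Theorem~\ref{thm:zero-class-control-order-bounded}(b), whereas the paper uses Theorem~\ref{thm:zero-class-control-order-bounded-automatic} together with the observation that reflexivity makes $X'_+$ trivially a face of $X'''_+$; both apply here, and your Dyson--Phillips argument for positivity is the standard one.
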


The perturbation assertions of Corollary~\ref{cor:pert-dual} hold once we establish that $C$ is zero-class $L^1$-admissible \cite[Corollary~III.3.16]{EngelNagel2000}. Let us prove this separately in the spirit of Section~\ref{sec:zero-class-output}:

\begin{lemma}
    \label{lem:zero-class-observation-dual}
    Let $X$ be a reflexive ordered Banach space with a generating and normal cone, let $(T(t))_{t\ge 0}$ be a positive $C_0$-semigroup on $X$, and let $Y$ be a Banach space. If $C\in \calL(X_1,Y)$ is such that $C'$ maps the unit ball of $Y'$ to an order-bounded set in $(X')_{-1}$, then $C$ is zero-class $L^1$-admissible.
\end{lemma}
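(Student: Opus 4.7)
My plan is to dualize: under reflexivity, $C$ being zero-class $L^1$-admissible as an observation operator is equivalent, via Weiss's duality theorem, to $C'$ being zero-class $L^\infty$-admissible as a control operator for the dual semigroup on $X'$. I will verify the latter by applying Theorem~\ref{thm:zero-class-control-order-bounded-automatic} on the dual side, so that the conclusion follows from a result already established in the paper.

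First I would check that the dual data meet the hypotheses of Theorem~\ref{thm:zero-class-control-order-bounded-automatic} with $X$ replaced by $X'$, with input space $Y'$, and with control operator $C'$. Reflexivity of $X$ ensures that $(T(t)')_{t\ge 0}$ is a $C_0$-semigroup on $X'$; it is positive because $(T(t))_{t\ge 0}$ is. Since $X_+$ is generating and normal, the dual cone $X'_+$ is, by \cite[Theorems~4.5 and~4.6]{KrasnoselskiiLifshitsSobolev1989} (cited in the introduction), both normal and generating; these properties swap under duality. Moreover, $X'$ is itself reflexive, so the canonical embedding yields $X' = (X')''$ as ordered Banach spaces, and thus $(X')_+$ is trivially a face of $(X')''_+$. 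Using the isomorphism $(X_1)' = (X')_{-1}$, the operator $C'$ lies in $\calL(Y',(X')_{-1})$, and by hypothesis it maps the closed unit ball of $Y'$ into an order-bounded subset of $(X')_{-1}$. Hence Assumption~\ref{ass:generalB} and the face condition of Theorem~\ref{thm:zero-class-control-order-bounded-automatic} are all met on the dual side.

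Applying Theorem~\ref{thm:zero-class-control-order-bounded-automatic} then yields that $C'$ is zero-class $L^\infty$-admissible as a control operator for $(T(t)')_{t\ge 0}$ on $X'$. Finally, since $X$ is reflexive, Weiss's duality theorem \cite[Theorem~6.9]{Weiss1989a} -- used in the same form twice earlier in the paper (e.g.\ in the proofs of Theorem~\ref{thm:zero-class-control-finite-dimensions} and of the AM-space result at the beginning of Section~\ref{sec:admissibility-control}) -- translates the zero-class $L^\infty$-admissibility of $C'$ for the dual semigroup into zero-class $L^1$-admissibility of $C$ as an observation operator for $(T(t))_{t\ge 0}$ on $X$.

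The only potential obstacle is the bookkeeping of the various identifications $(X_1)'=(X')_{-1}$ and $X=X''$ together with ensuring that the zero-class condition (rather than just admissibility) is preserved by Weiss's duality. The latter is standard and boils down to the norm identity $\lVert\Psi_\tau\rVert_{X\to L^1([0,\tau],Y)} = \lVert\Phi_\tau^{\mathrm{dual}}\rVert_{L^\infty([0,\tau],Y')\to X'}$, where $\Phi_\tau^{\mathrm{dual}}$ denotes the input map of the control system $(A',C')$ on $X'$; once this is in place, the proof is complete.
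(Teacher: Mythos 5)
Your proposal is correct and follows essentially the same route as the paper: reflexivity gives the face condition needed to apply Theorem~\ref{thm:zero-class-control-order-bounded-automatic} to $C'\in\calL(Y',(X')_{-1})$ as a control operator for the dual semigroup, and Weiss duality then transfers zero-class $L^\infty$-admissibility of $C'$ back to zero-class $L^1$-admissibility of $C$. Your version merely spells out in more detail the verification of Assumption~\ref{ass:generalB} on the dual side (normality/generation swapping under duality, the identification $(X_1)'=(X')_{-1}$), which the paper leaves implicit.
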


\begin{proof}
    Since $X$ is reflexive, $X_+$ coincides with $X''_+$. Therefore, $C'\in \calL(Y', X'_{-1})$ is zero-class $L^\infty$-admissible by Theorem~\ref{thm:zero-class-control-order-bounded-automatic}. Zero-class $L^1$-admissibility of $C$ is now a consequence of Weiss duality result \cite[Theorem~6.9(a)]{Weiss1989a}.
\end{proof}

Note that Corollary~\ref{cor:pert-dual} implies in particular the following result: every positive finite rank perturbation $C: X_1 \to X$ (and more generally, every finite rank perturbation that can be written as the difference of two positive operators $X_1 \to X$) of a positive $C_0$-semigroup on a reflexive Banach lattice again generates a $C_0$-semigroup. 
In fact, this is known to be true even if $X$ is not reflexive and goes back to Arendt and Rhandi \cite[Corollary~2.4]{ArendtRhandi1991}.
Note that this is not true, in general, for finite rank perturbations that do not satisfy any positivity assumption: 
Desch and Schappacher proved in \cite[Theorem~2]{DeschSchappacher1988} that a $C_0$-semigroup on a Banach space $X$ is automatically analytic if every rank-$1$ perturbation $C: X_1 \to X$ still generates a $C_0$-semigroup.

\subsection*{Acknowledgements} 
The first author was funded by the Deutsche Forschungsgemeinschaft (DFG, German Research Foundation) -- 523942381.
The article was initiated during a pleasant visit of the first author to the third author at Tampere University, Finland in autumn 2023. 
The first author is indebted to COST Action 18232 for financial support for this visit. 
The research of the third author was supported by the Research Council of Finland grant number 349002.

\appendix

\section{Order-bounded images of the unit ball}
    \label{sec:factorisation}
In Section~\ref{sec:admissibility-control}, the property that an operator $T\in \calL(U,Z)$ maps the unit ball of a Banach space $U$ into an order-bounded subset of an ordered Banach space $Z$ played an important role in obtaining sufficient conditions for admissibility. 
In this appendix, we give a characterisation and, for positive $T$, a number of sufficient conditions for this property.
Note that if $Z$ is even a Banach lattice, the weaker property that $T$ maps the unit ball of $U$ into an order-bounded subset of $Z''$ is called \emph{majorizing} \cite[Proposition~IV.3.4]{Schaefer1974}. 

Before stating our first result, we clarify that by a \emph{null sequence} in a Banach space, we mean a sequence converging to $0$. Additionally, we make the following simple observation:~let $S$ be an order-bounded subset of an ordered Banach space $X$ such that $0\in S$. Then there exists $x,z\in X_+$ such that $S\subseteq [-x,z]$. Taking $e=x+z\in X_+$, we get that $S$ is contained in $[-e,e]$.

\begin{proposition}
    \label{prop:unit-ball-into-order-bounded}
    Let $U$ be a Banach space, let $Z$ be an ordered Banach space with a normal cone, and let $T\in \calL(U,Z)$.
    The following are equivalent.
    \begin{enumerate}[\upshape (i)]
        \item The operator $T$ maps the unit ball of $U$ into an order-bounded subset of $Z$.
        \item There exists an ordered Banach space $\widetilde X$ with a unit such that $T$ factorises as
        \[
            T: U \overset{T_1}{\longrightarrow} \widetilde X \overset{T_2}{\longrightarrow} Z
        \]
        for an operator $T_1 \in \calL(U,\widetilde X)$ and a positive operator $T_2 \in \calL(\widetilde X, Z)$. 
    \end{enumerate}
    If $Z$ is even a KB-space, then {\upshape (i)} and {\upshape (ii)} are also equivalent to each of the following:
    \begin{enumerate}[resume]
        \item There exists $c>0$ such that for every null sequence $(u_n)$ in the open unit ball of $U$, there exists $z \in Z_+$ such that $\norm{z}\le c$ and $\modulus{Tu_n}\le z$ for all $n\in\bbN$.
        \item There exists $c>0$ such that $\norm{\sup_{u \in F} \modulus{Tu}}\le c \sup_{u \in F}\norm{u}$ for every finite subset $F\subseteq U$.
    \end{enumerate}
\end{proposition}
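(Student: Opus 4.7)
I would structure the argument in two stages: first, establish the general equivalence (i) $\Leftrightarrow$ (ii) via the principal ideal construction; then, under the KB-hypothesis, route both (iii) and (iv) through (i) by controlling finite suprema and invoking the net version of the KB-property.

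\emph{The equivalence (i) $\Leftrightarrow$ (ii).} Given (i), with $T(B_U) \subseteq [-e,e]$ for some $e \in Z_+$, the natural candidate for $\widetilde X$ is the principal ideal $Z_e$. By normality of $Z_+$ and \cite[Theorem~2.60]{AliprantisTourky2007}, $(Z_e, \norm{\argument}_e)$ is an ordered Banach space with unit $e$. I set $T_2 \colon Z_e \hookrightarrow Z$ as the canonical embedding, which is positive and continuous since normality yields $\norm{z}_Z \le M\norm{e}_Z \norm{z}_e$, and define $T_1 \colon U \to Z_e$ by $T_1 u := T u$, which satisfies $\norm{T_1 u}_e \le 1$ on $B_U$. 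Conversely, under (ii), the paper's convention on spaces with a unit forces $B_{\widetilde X} \subseteq [-e,e]$; applying $T_1$ and then the positive $T_2$ gives $T(B_U) \subseteq [-\norm{T_1}\,T_2(e),\, \norm{T_1}\,T_2(e)]$, yielding (i).

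\emph{The KB-space equivalences.} The implications (i) $\Rightarrow$ (iii) and (i) $\Rightarrow$ (iv) are routine: an order bound $[-e,e]$ on $T(B_U)$ yields $\modulus{Tu} \le \norm{u}\,e$ for every $u \in U$, so taking suprema over a finite $F \subseteq U$ gives (iv) with constant $\norm{e}$, while (iii) holds with $z := e$ for any sequence in $B_U$. For the converses, the key tool is the standard extension of the KB-property: in a KB-space, every increasing norm-bounded net in the positive cone is norm-convergent (a consequence of Dedekind completeness together with order continuity of the norm). Applied to the net $z_F := \sup_{u \in F}\modulus{Tu}$ indexed by finite subsets $F \subseteq B_U$ (which exists by Dedekind completeness), a uniform bound $\norm{z_F} \le c$ produces a limit $z \in Z_+$ dominating every $\modulus{Tu}$ with $u \in B_U$, giving (i). This disposes of (iv) $\Rightarrow$ (i) directly, since (iv) is precisely such a uniform bound.

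The main obstacle, as I see it, is \emph{reducing (iii) to the (iv)-type uniform bound}, since (iii) only constrains null sequences. I plan a contradiction argument via scaling: assuming $\sup_F \norm{z_F} = \infty$, pick finite sets $F_n = \{u_{n,1},\dots,u_{n,k_n}\} \subseteq B_U$ with $\norm{z_{F_n}} \ge n^2$, and form the concatenated sequence $(v_{n,i})_{n,i}$ defined by $v_{n,i} := u_{n,i}/n$. This sequence lies in the open unit ball of $U$ and tends to $0$. Applying (iii) yields a single $z \in Z_+$ with $\norm{z} \le c$ and $\modulus{T v_{n,i}} \le z$ for all $n,i$, whence $\modulus{T u_{n,i}} \le n z$, so $z_{F_n} \le n z$ and $\norm{z_{F_n}} \le c n$, contradicting the quadratic growth for large $n$. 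Once $\sup_F \norm{z_F}$ is shown finite, the net-convergence step from the previous paragraph completes the proof.
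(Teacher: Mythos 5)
Your equivalence (i) $\Leftrightarrow$ (ii) is essentially the paper's own argument: the principal ideal $Z_e$ with its gauge norm for (i) $\Rightarrow$ (ii), and the fact that the unit ball of a space with a unit is order-bounded for the converse (the paper phrases the latter via \cite[Proposition~2.11]{GlueckWeber2020} rather than via the renorming convention, but the content is the same). Where you genuinely diverge is the KB-space part. The paper handles (iii) and (iv) by citation: it invokes \cite[Theorem~7.1]{Wnuk1999} to get that $Z_+$ is a face of $Z''_+$ and then appeals to \cite[Proposition~IV.3.4]{Schaefer1974}, which characterises majorizing operators into Banach lattices in exactly the terms of (iii) and (iv); the face property upgrades order-boundedness in $Z''$ to order-boundedness in $Z$. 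You instead give a self-contained argument: finite suprema $z_F$ exist by the lattice property, (iv) bounds the increasing net $(z_F)_{F \subseteq B_U}$, and the net form of the KB-property (which is the paper's own working definition in Table~\ref{table:properties}) produces the order bound; the reduction of (iii) to a uniform bound on the $z_F$ via the rescaled concatenated null sequence is a correct and rather clean contradiction argument. Your route buys transparency and independence from the Schaefer/Wnuk machinery at the cost of a page of elementary work; the paper's route is shorter but leaves the reader to unpack a nontrivial external proposition. Two cosmetic points in your argument: for $n=1$ the vectors $u_{1,i}/1$ need not lie in the \emph{open} unit ball, so start the concatenation at $n=2$ (or divide by $n+1$); and your uniform bound is obtained only for $F \subseteq B_U$, so a one-line homogeneity remark is needed to recover (iv) for arbitrary finite $F \subseteq U$ — though since you route (iii) through (i) anyway, this is harmless.
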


\begin{proof}
    ``(ii) $\Rightarrow$ (i)'':
    Assume that~(ii) holds. Since $\widetilde X$ has a unit, there exists -- according to \cite[Proposition~2.11]{GlueckWeber2020} -- an element $e\in \widetilde X$ and $\varepsilon>0$ such that the implication
    \[
        \norm{x}\le \varepsilon \quad \Rightarrow \quad x\le e
    \]
    holds for all $x \in \widetilde X$. 
    In particular, taking $\lambda = \varepsilon^{-1}\norm{T_1}$, it follows that $T_1u\in [-\lambda e, \lambda e]$ for every $u\in B_U$. 
    Employing the positivity of $T_2$, it follows that $T=T_2T_1$ maps the unit ball of $U$ into an order-bounded subset of $Z$.

    ``(i) $\Rightarrow$ (ii)'':
    The observation about order bounded subsets made prior to the proposition guarantees the existence of $e\in Z_+$ such that $T(B_U)\subseteq [-e,e]$.
    On the other hand, because the cone of $Z$ is normal,  $\widetilde X:= Z_e$ is an ordered Banach space with a unit.
    The above argument allows us to infer that  $\Ima T\subseteq \widetilde X$.
    Whence, $T$ factors through $\widetilde X$ via the canonical embedding $\widetilde X \hookrightarrow Z$ (which is positive). 

    Finally, suppose that $Z$ is a KB-space. 
    Then $Z_+$ is a face of $(Z'')_+$ according to \cite[Theorem~7.1]{Wnuk1999}. 
    The equivalence of (i)--(iv) now follows from \cite[Proposition~IV.3.4]{Schaefer1974}.
\end{proof}

\begin{proposition}
    \label{prop:unit-ball-into-order-bounded-automatic}
    Let $U$ and $Z$ be ordered Banach spaces. Then $T\in \calL(U,Z)_+$
     maps the unit ball of $U$ into an order-bounded subset of $Z$ in the following cases:
    \begin{enumerate}[\upshape (a)]
        \item The space $U$ has a unit.
            
        \item The open unit ball of $U$ is upwards directed and every increasing norm-bounded net in $Z_+$ is norm-convergent.
    \end{enumerate}
\end{proposition}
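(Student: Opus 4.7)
\textbf{Plan for the proof of Proposition~\ref{prop:unit-ball-into-order-bounded-automatic}.}

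For part (a), I would start from the characterization recalled in the introduction: a positive element $e$ of an ordered Banach space is a unit exactly when it is an interior point of the positive cone. Applied to $U$, this furnishes $\varepsilon>0$ with $\varepsilon B_U \subseteq [-e,e]$, i.e.\ $B_U \subseteq [-\varepsilon^{-1}e,\varepsilon^{-1}e]$. Since $T$ is positive, it preserves order intervals, so
\[
    T(B_U) \subseteq T[-\varepsilon^{-1}e,\varepsilon^{-1}e] \subseteq [-\varepsilon^{-1}Te,\varepsilon^{-1}Te],
\]
which is order-bounded in $Z$. This step is essentially immediate once the ``unit = interior point'' fact is recalled.

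For part (b), the strategy is to turn the upwards directedness of $B_U$ into an increasing norm-bounded net in $Z_+$ and exploit the convergence assumption. Concretely, I would fix an arbitrary $u_0 \in B_U$ and consider the net
\[
    z_u := Tu - Tu_0, \qquad u \in B_U,\ u \ge u_0,
\]
indexed by the directed set $\{u \in B_U : u \ge u_0\}$ (non-empty and directed by the hypothesis on $B_U$). By positivity of $T$ this is an increasing net in $Z_+$, and it is norm-bounded since $\norm{z_u} \le 2\norm{T}$. The assumption then yields a norm-limit $z^* \in Z_+$; because $Z_+$ is closed and the net is increasing, $z_u \le z^*$ for each $u$ in the net. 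Hence $Tu \le Tu_0 + z^*$ whenever $u \ge u_0$ in $B_U$.

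To pass from this to a bound over all of $B_U$, I would use upwards directedness once more: given any $u \in B_U$, pick $u' \in B_U$ with $u, u_0 \le u'$, so that $Tu \le Tu' \le Tu_0 + z^*$. This provides a uniform upper bound $w := Tu_0 + z^*$ for $T(B_U)$. Since $B_U = -B_U$, applying the same bound to $-u$ in place of $u$ yields $Tu \ge -w$, so $T(B_U) \subseteq [-w,w]$ as required. The only mildly delicate point is the derivation of the uniform upper bound on all of $B_U$ from the one valid only above $u_0$; once the trick of invoking upwards directedness a second time is noticed, the argument is routine, so I do not expect a genuine obstacle.
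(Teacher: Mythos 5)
Your proof is correct and follows essentially the same route as the paper: for (a) the paper simply invokes the implication (ii)~$\Rightarrow$~(i) of Proposition~\ref{prop:unit-ball-into-order-bounded}, whose proof is exactly your ``unit = interior point of the cone'' argument, and for (b) the paper likewise builds an increasing norm-bounded net (indexed by $B_U \cap U_+$ rather than your $\{u \in B_U : u \ge u_0\}$), takes its norm limit as an upper bound, and uses upwards directedness together with the symmetry of $B_U$ to bound all of $T(B_U)$. No gaps; your handling of the passage from the bound above $u_0$ to a uniform bound is precisely the point the paper leaves implicit.
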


\begin{proof}
    If $U$ has a unit, then due to the positivity of $T$, the assumptions in condition~(ii) of Proposition~\ref{prop:unit-ball-into-order-bounded} are fulfilled. Hence, $T$ maps the unit ball of $U$ into an order-bounded subset of $Z$; observe that the proof of (ii) $\Rightarrow$ (i) in Proposition~\ref{prop:unit-ball-into-order-bounded} did not need the cone of $Z$ to be normal.
    
    (b) Let $C$ denote the open unit ball of $U$. 
    Since $C$ is directed, $(Tu)_{u \in C}$ is an increasing and norm bounded net in $Z$ and thus, by assumption, norm converges to an element $z \in Z$. 
    Hence, $TC \subseteq [-z,z]$ and since order intervals are closed, we conclude that $T$ also maps the closed unit ball $B_U = \overline{C}$ into $[-z,z]$.
\end{proof}

\section{Lower resolvent bounds and a characterisation of AL-spaces}
    \label{appendix:al-characterisation}
Let $X$ be a Banach lattice and let $A: \dom A \subseteq X \to X$ be a \emph{resolvent positive} operator, i.e., all sufficiently large real numbers $\lambda$ are in the resolvent set of $A$ and satisfy $\Res(\lambda,A) \ge 0$. 
In particular, this is satisfied if $A$ generates a positive $C_0$-semigroup on $X$.
Consider the following condition: 
\begin{align}
    \label{cond:lower-bound}
    \begin{split}
        \text{There exist numbers } \lambda > & \spb(A) \text{ and } c > 0 \text{ such that } A \\
        \text{ satisfies }  \norm{\Res(\lambda,A)f} & \ge c \norm{f} \text{ for every } f \in X_+.
    \end{split}
\end{align} 
This condition occurs in recent papers on infinite-dimensional positive systems, see \cite[Theorems~2.1]{Gantouh2022a} and \cite[Theorem~1]{Gantouh2024}) and was shown to have rather strong consequences for admissibility of control operators. 
Earlier~\eqref{cond:lower-bound} was used to show generation results \cite[Theorem~2.5]{Arendt1987}. 

The condition~\eqref{cond:lower-bound} is, of course, satisfied in the trivial case where $A$ is bounded. 
More interestingly, if $X$ is an $L^1$-space and the semigroup generated by $A$ is \emph{stochastic} -- i.e., positive and norm preserving on $X_+$ -- then it is also easy to see that~\eqref{cond:lower-bound} is satisfied. 
However, it is not clear at all how to find an unbounded operator on an $L^p$-space for $p > 1$ that satisfies~\eqref{cond:lower-bound}. 
In this appendix we justify this theoretically:~in the important special case where $A$ has compact resolvent, the condition~\eqref{cond:lower-bound} implies that $X$ is -- up to an equivalent renorming -- an $L^1$-space (Corollary~\ref{cor:lower-estimate-compact}).

Let us recall the following terminology again: 
An \emph{AL-space} is a Banach lattice whose norm is additive on the positive cone. 
Every $L^1$-space is an AL-space and conversely, every AL-space can be shown to be isomorphic (as a Banach lattice) to an $L^1$-space (over a possible non-$\sigma$-finite measure space), cf.\ Table~\ref{table:properties}.
We first show a characterisation of AL-spaces, up to equivalent norms. For related geometric results, see \cite[Proposition~3.11]{MarinacciMontrucchio2012} and \cite[Section~2]{AdlyErnstThera2004}.

\begin{theorem}
    \label{thm:char-al}
    For a Banach lattice $X$, the following are equivalent:
    \begin{enumerate}[\upshape (i)]
	\item
	There exists an equivalent norm on $X$ which turns $X$ into an AL-space.
		
	\item
	No net in the positive unit sphere of $X$ converges weakly to $0$.
    \end{enumerate}
\end{theorem}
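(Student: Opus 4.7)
The plan is to reduce the theorem to the following auxiliary condition $(*)$: there exist $\varphi \in X'_+$ and $c > 0$ with $\varphi(x) \geq c \norm{x}$ for all $x \in X_+$. The equivalence $(i) \Leftrightarrow (*)$ is immediate from the Banach lattice structure. Given $(*)$, the prescription $p(x) := \varphi(\modulus{x})$ defines an equivalent lattice norm (the upper bound $\varphi(\modulus{x}) \leq \norm{\varphi}\norm{x}$ uses $\norm{\modulus{x}} = \norm{x}$) which is additive on $X_+$ by linearity of $\varphi$, turning $X$ into an AL-space. Conversely, if $q$ is any equivalent AL-norm, then $\varphi(x) := q(x^+) - q(x^-)$ is a linear functional --- linearity follows from the identity $q(a) + q(d) = q(b) + q(c)$ whenever $a + d = b + c$ in $X_+$, itself a direct consequence of additivity of $q$ on the positive cone --- and this $\varphi$ clearly satisfies $(*)$.

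The implication $(i) \Rightarrow (ii)$ is then immediate: with $\varphi$ from $(*)$, every net $(x_\alpha)$ in the positive unit sphere $S_+ := \{x \in X_+ : \norm{x} = 1\}$ satisfies $\varphi(x_\alpha) \geq c > 0$, so cannot converge weakly to zero.

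The content of the theorem is $(ii) \Rightarrow (i)$. Condition $(ii)$ is precisely the assertion that $0$ does not lie in the weak closure of $S_+$, so a basic weak-open neighborhood of $0$ disjoint from $S_+$ provides finitely many functionals $\varphi_1, \ldots, \varphi_n \in X'$ and $\epsilon > 0$ such that $\max_{i} \modulus{\varphi_i(x)} \geq \epsilon$ for every $x \in S_+$, and hence by homogeneity $\max_{i} \modulus{\varphi_i(x)} \geq \epsilon \norm{x}$ for every $x \in X_+$. The key step is then to invoke the Banach lattice structure of $X'$ and set $\varphi := \sum_{i=1}^{n} \modulus{\varphi_i}$, with moduli taken in $X'$. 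Since $\modulus{\varphi_i}(x) \geq \modulus{\varphi_i(x)}$ for each $x \in X_+$ (by the definition of the modulus in $X'$, taking $y = x$ in $\modulus{\varphi_i}(x) = \sup\{\modulus{\varphi_i(y)} : \modulus{y} \leq x\}$), the functional $\varphi$ belongs to $X'_+$ and satisfies $\varphi(x) \geq \epsilon \norm{x}$ on the cone, establishing $(*)$. The main obstacle is precisely this translation from the qualitative topological statement $(ii)$ to the quantitative estimate $(*)$; the crux is that weak neighborhoods of $0$ arise from finite systems of scalar functionals, and that in the dual Banach lattice these can be amalgamated via their moduli into a single positive functional dominating the norm from below on $X_+$.
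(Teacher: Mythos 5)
Your proof is correct and follows essentially the same route as the paper: both arguments reduce (ii) to the existence of finitely many functionals uniformly separating the positive unit sphere from $0$, amalgamate them into a single positive functional $\varphi$ bounded below on the cone, and take $x \mapsto \varphi(\modulus{x})$ as the equivalent AL-norm. The only cosmetic differences are that you extract the finite family directly from a basic weak neighbourhood of $0$ (rather than building a net indexed by finite sets of functionals and deriving a contradiction) and that you allow arbitrary functionals and pass to their moduli in $X'$, whereas the paper works with positive norm-one functionals from the outset.
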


\begin{proof}
    ``(i) $\Rightarrow$ (ii)'': 
    This can easily be seen by testing against the norm functional on the AL-space.
	
    ``(ii) $\Rightarrow$ (i)'':
    Let $\calF'$ denote the set of all non-empty finite subsets of the positive unit sphere in $X'$. 
    Assume for a moment that the following property~$(*)$ holds: 
    For each number $\varepsilon > 0$ and each $F' \in \calF'$,
    there exists a vector $x_{\varepsilon, F'}$ in the positive unit sphere of $X$ such that 
    $\langle x', x_{\varepsilon, F'} \rangle < \varepsilon$ for all $x' \in F'$. 
    The relation $\preceq$ on $(0,\infty) \times \calF'$  given by 
    $(\varepsilon_1, F_1') \preceq (\varepsilon_2, F_2')$ if and only if $\varepsilon_1 \ge \varepsilon_2$ and $F_1' \subseteq F_2'$, 
    turns $(0,\infty) \times \calF'$ into a directed set. 
    The net $(x_{\varepsilon, F'})_{(\varepsilon, F') \in (0,\infty) \times \calF'}$ 
     in the positive unit sphere of $X$ converges weakly to $0$,
    contradicting~(ii), so the property~$(*)$ cannot  hold.
	
    Hence, there exists a number $\varepsilon > 0$ and a set $F' \in \calF'$ such that 
    for all $x$ in the positive unit sphere of $E$ one has $\langle x', x \rangle \ge \varepsilon$ 
    for some $x' \in F'$.
    Setting $z' := \sum_{x' \in F'} x'$, we obtain $\langle z', x \rangle \ge \varepsilon \norm{x}$ 
    for each $x \in X_+$.
    Hence, $x \mapsto \langle z', \modulus{x} \rangle$ is an equivalent lattice norm on $X$,
    which is clearly additive on the positive cone.
\end{proof}

\begin{corollary}
    \label{cor:lower-estimate-compact}
    Let $X$ be a Banach lattice and let $T: X \to X$ be a compact linear operator. 
    If there exists a number $c > 0$ such that $\norm{Tx} \ge c \norm{x}$ for all $x \in X_+$, 
    then there exists an equivalent norm on $X$ which turns $X$ into an AL-space.
\end{corollary}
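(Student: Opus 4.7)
The plan is to verify the sufficient condition (ii) in Theorem~\ref{thm:char-al}: no net in the positive unit sphere of $X$ converges weakly to $0$. Once this is established, the theorem immediately yields an equivalent norm under which $X$ becomes an AL-space.

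Suppose, for contradiction, that there is a net $(x_\alpha)$ in the positive unit sphere with $x_\alpha \rightharpoonup 0$ weakly. By the lower bound hypothesis applied to each positive $x_\alpha$, we have $\norm{Tx_\alpha} \ge c \norm{x_\alpha} = c > 0$ for every $\alpha$. I will derive a contradiction by showing that $Tx_\alpha \to 0$ in norm.

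The key step is to promote the well-known fact that compact operators turn weakly null sequences into norm null sequences to the setting of nets. Since $T$ is compact and $(x_\alpha)$ is bounded, the net $(Tx_\alpha)$ is contained in the norm-compact set $\overline{T(B_X)}$. If $Tx_\alpha$ did not tend to $0$ in norm, there would be a subnet $(Tx_{\alpha_\beta})$ bounded away from $0$; extracting a further subnet that converges in norm (possible by compactness), the limit $y$ would satisfy $\norm{y} \ge c$, but on the other hand this subnet also converges weakly to $0$ by hereditary of weak convergence, forcing $y = 0$. This contradiction gives $Tx_\alpha \to 0$ in norm, incompatible with the uniform lower bound $\norm{Tx_\alpha} \ge c$.

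Hence condition (ii) of Theorem~\ref{thm:char-al} holds, and the conclusion follows. The only delicate point is the net version of the weak-to-norm continuity of compact operators on bounded sets; but this is essentially a subnet extraction argument that carries over without difficulty from the sequential case, so there is no serious obstacle here.
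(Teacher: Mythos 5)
Your proposal is correct and follows essentially the same route as the paper: both argue by contradiction via condition (ii) of Theorem~\ref{thm:char-al}, using compactness of $T$ to send the weakly null net $(x_\alpha)$ to a norm null net, contradicting $\norm{Tx_\alpha}\ge c$. The subnet extraction you spell out (relying on norm-compactness of $\overline{T(B_X)}$ and weak-to-weak continuity of $T$) is exactly the net version of the standard fact that the paper invokes implicitly.
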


\begin{proof}
    If the conclusion is false, then
    by Theorem~\ref{thm:char-al}, there is a net $(x_j)$ in the positive unit sphere of $X$ 
    converging weakly to $0$. 
    From compactness of $T$, we deduce that
    $(Tx_j)$ converges to $0$ in norm, contradicting that
    $\norm{Tx_j} \ge c$ for each index $j$.
\end{proof}

We find it instructive to state the following special case of Corollary~\ref{cor:lower-estimate-compact}.

\begin{corollary}
    Let $X$ be a Banach lattice and let $A: \dom A \subseteq X \to X$ be a resolvent positive linear operator. 
    If $A$ has compact resolvent and satisfies~\eqref{cond:lower-bound}, then there exists an equivalent norm on $X$ which turns $X$ into an AL-space.
\end{corollary}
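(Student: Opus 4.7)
The plan is to simply apply the preceding Corollary~\ref{cor:lower-estimate-compact} to the resolvent operator at the particular spectral parameter provided by condition~\eqref{cond:lower-bound}. The statement is almost a tautological consequence of what has already been done; the substantive work was in proving Theorem~\ref{thm:char-al} and Corollary~\ref{cor:lower-estimate-compact}.

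Concretely, I would proceed as follows. By hypothesis there exist $\lambda > \spb(A)$ and $c > 0$ such that $\norm{\Res(\lambda,A) f} \ge c\norm{f}$ for every $f \in X_+$. Set $T := \Res(\lambda, A) \in \calL(X)$. Since $A$ has compact resolvent, $T$ is a compact linear operator on $X$. The lower bound on $X_+$ is exactly the hypothesis of Corollary~\ref{cor:lower-estimate-compact} applied to $T$. Applying that corollary yields an equivalent norm on $X$ turning it into an AL-space.

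There is essentially no obstacle here, since all hypotheses of Corollary~\ref{cor:lower-estimate-compact} are directly supplied by the assumptions. The only tiny point worth noting is that positivity of $T$ is not required in Corollary~\ref{cor:lower-estimate-compact} (the corollary only needs the lower estimate on $X_+$ and compactness), so one does not even need to invoke resolvent positivity of $A$ in the proof — resolvent positivity is only used implicitly in the sense that~\eqref{cond:lower-bound} is formulated at a point $\lambda$ that belongs to $\resSet(A)$ and where $\Res(\lambda,A)$ is meaningfully compared on the cone.
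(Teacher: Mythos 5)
Your proposal is correct and is exactly the paper's intended argument: the paper presents this corollary as a special case of Corollary~\ref{cor:lower-estimate-compact} applied to $T = \Res(\lambda,A)$, which is precisely what you do. Your side remark that resolvent positivity is not actually needed beyond making~\eqref{cond:lower-bound} meaningful is also accurate.
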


\bibliographystyle{plainurl} 
\bibliography{literature}

\end{document}